\tikzset{every loop/.style={}}
\tikzset{
    labl/.style={anchor=south, rotate=90, inner sep=.5mm}
}
\newtheorem{thm}{Theorem}[section]
\newtheorem{lemma}[thm]{Lemma}
\newtheorem{prop}[thm]{Proposition}
\newtheorem{cor}[thm]{Corollary}
\numberwithin{equation}{section}
\def\e{\varepsilon}
\def\ssm{\smallsetminus}
\def\W{\mathcal{W}}
\def\Z{\mathbb{Z}}
\def\<{\langle}
\def\>{\rangle}
\def\lkblu{{\rm{lk}_d}}
\theoremstyle{definition}
\newtheorem{defn}[thm]{Definition} % definition numbers are dependent on theorem numbers
\newtheorem{definition}[thm]{Definition} % definition numbers are dependent on theorem numbers
\newtheorem{example}[thm]{Example} % same for example numbers
\newtheorem{remk}[thm]{Remark}
\newtheorem{remark}[thm]{Remark}
\newenvironment{claim}[1]{\par\noindent\underline{Claim:}\space#1}{}
\DeclareMathOperator{\Aut}{Aut}
\DeclareMathOperator{\Map}{Map}
\newcommand{\bfP}{\mathbf{P}}
\newcommand{\bfF}{\mathbf{F}}
\newcommand{\vare}{\varepsilon}
\def\G{\Gamma}
\def\A{\mathcal A} 
\def\-{\overline}
\def\wh{\widehat}
\newcommand{\subjclass}[2][2010]{%
	\let\@oldtitle\@title%
	\gdef\@title{\@oldtitle\footnotetext{#1 \emph{Mathematics subject classification.} #2}}%
}
\newcommand{\keywords}[1]{%
	\let\@@oldtitle\@title%
	\gdef\@title{\@@oldtitle\footnotetext{\emph{Key words and phrases.} #1.}}%
}
\title{Leighton's Theorem: extensions, limitations, and quasitrees}
\author{Martin R. Bridson and Sam Shepherd
	\thanks{Shepherd was funded by the Engineering and Physical Sciences Research Council.}} 
\subjclass{20F65, 20F67, 05C25}
\keywords{Quasitrees, covering spaces, Leighton's Theorem}
\date{\today}
\begin{document}
	
\maketitle

\begin{abstract}   
Leighton's Theorem states that if there is a tree $T$ that covers two finite graphs $G_1$ and $G_2$,
then there is a finite graph $\hat G$ that is covered by $T$ and covers both $G_1$ and $G_2$. We prove that this result
does not extend to regular covers by graphs other than trees. Nor does it extend to non-regular covers
by a quasitree, even if the automorphism group of the quasitree contains a uniform lattice. But it does extend to regular coverings by quasitrees. 
\end{abstract}

\bigskip 
\section{Introduction}

Leighton's Theorem \cite{leighton} states that if a pair of finite graphs $G_1, G_2$ have a common
covering space, then they have a common finite covering. This serves as a prototype for a type of theorem that one might
hope to prove in many different categories.
 In particular, for different categories of (orbi)spaces one might hope to prove
that if two compact spaces have a common covering of a specified kind, then there is a common intermediate covering
that is compact. The search for such theorems is related to the study of rigidity, particularly quasi-isometric rigidity:
if one is in a situation where one can promote the existence of a quasi-isometry of groups $\G_1\sim \G_2$ into the existence
of proper cocompact actions on a common space $X$, then proving that $\G_1$ and $\G_2$ are abstractly commensurable 
is equivalent to proving that the orbispaces $X/\G_1$ and $X/\G_2$ have a common finite covering. 
Beyond quasi-isometric rigidity, the study of Leighton-type theorems fits into a rich and active area of geometry and group theory:
in geometry, particularly low-dimensional topology, understanding the existence and nature of finite-sheeted coverings has been a 
dominant theme in recent years (e.g. \cite{HW}, \cite{agol}); in parallel, there have been significant advances in group theory concerning the difficulty of recognising finite-index subgroups and the extent to which groups of geometric interest
are determined by their lattice of finite-index subgroups
(e.g. \cite{BWilt}, \cite{B4-annals}).

Against this background, the second author and others have extended Leighton's Theorem in various ways \cite{Huang}, \cite{neumann}, \cite{TwogenLeighton}, \cite{samThesis},  \cite{graphfins} with applications to rigidity 
\cite{BehrstockNeumann}, \cite{sam-woodh}, \cite{StarkWoodhouse}.
Our main purpose in this article is to understand the extent to which Leighton's Theorem applies to coverings of graphs by
quasitrees, i.e. locally finite graphs that are quasi-isometric to trees.
We shall see that this is a subtle question, the answer to which depends crucially on whether the coverings
considered are regular or not and whether the automorphism group of the quasitree in question contains uniform lattices.   

Moving beyond quasitrees and virtually free groups, 
we shall also describe situations where finite graphs with a common regular covering do not share a common
intermediate covering because one of the Galois groups involved does not have sufficiently many (or indeed any) finite-index
subgroups (Theorem \ref{t:BMW}).

In the classical setting, it is easy to reduce Leighton's Theorem 
to the case where the graphs $G_1$ and $G_2$ are connected. Standard arguments about covering spaces show that in 
this setting the conclusion of the theorem is that if the universal 
coverings of $G_1$ and $G_2$ are isomorphic to the same tree $T$, 
then after composing with automorphisms of $T$, the covering maps $T\to G_i$ both factor through a covering $T\to\hat{G}$
where $\hat{G}$ is a finite graph. In particular, there is a diagram of covering maps
\begin{equation}\label{e:Leigt}
	\begin{tikzcd}[
	ar symbol/.style = {draw=none,"#1" description,sloped},
	isomorphic/.style = {ar symbol={\cong}},
	equals/.style = {ar symbol={=}},
	subset/.style = {ar symbol={\subset}}
	]
	&T\ar{d}\\
	&\hat{G}\ar{dl}\ar{dr}\\
	G_1&&G_2	
	\end{tikzcd}
\end{equation}
Moreover, one can arrange for all of these coverings to be regular: this is a useful refinement
made explicit in Bass and Kuhlkarni's proof \cite{BK}. If one thinks in terms of actions rather than
quotient spaces, then Leighton's Theorem is the torsion-free case of the following theorem:
if $\G_1,\G_2<{\rm{Aut}}(T)$ act properly and cocompactly on $T$, then
there exists $g\in{\rm{Aut}}(T)$ such that $\Gamma_1^g:=g^{-1}\G_1g$ and $\G_2$  are commensurable, i.e. $\Gamma_1^g\cap\G_2$
has finite index in both $\Gamma_1^g$ and $\G_2$. We shall rely on a strengthening of this formulation
proved in \cite{TwogenLeighton} that constrains the conjugator $g$.

Our first result shows that one cannot extend Leighton's Theorem to arbitrary
regular coverings $G\to G_i$ of finite connected graphs. 

\begin{thm}\label{t:BMW} There exist connected graphs $G, G_1, G_2$ and regular covering
maps $G\to G_1$ and $G\to G_2$, such that 
$G_1$ and $G_2$ are finite but there does not exist a covering $G\to\hat{G}$
of any finite graph $\hat{G}$ such that $\hat{G}$ covers both $G_1$ and $G_2$.
\end{thm}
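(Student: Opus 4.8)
The plan is to translate the statement into covering‑space language, reduce it to a property of the two Galois groups, and then construct an example with that property.

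\emph{The reduction.} A connected regular covering $G\to G_i$ of finite connected graphs (with $G$ connected) is encoded by a normal subgroup $M:=\pi_1 G\lhd\pi_1 G_i$, the Galois group being $Q_i:=\pi_1 G_i/M$; since $G_i$ is a finite graph $\pi_1 G_i$ is free of finite rank, and since $M$ is unrestricted, $Q_i$ may be taken to be an arbitrary finitely presented group (indeed an arbitrary finitely generated one). Now suppose some finite graph $\hat G$ admits coverings $G\to\hat G$ and $\hat G\to G_i$ for $i=1,2$. Then $\hat G$ is connected and $\pi_1\hat G$ is free of finite rank; because $\hat G\to G_i$ is a covering of finite graphs, the image of $\pi_1\hat G$ in $\pi_1 G_i$ has finite index, and because $G\to\hat G$ is a covering we get $M\le\pi_1\hat G$ (the inclusion holding on the nose inside $\pi_1 G_i$, since $M$, being normal there, equals all of its conjugates). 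Hence $\pi_1\hat G/M$ has finite index in $Q_i$. If $Q_i$ has \emph{no proper subgroup of finite index} this forces $\pi_1\hat G=\pi_1 G_i$, i.e.\ $\hat G\cong G_i$; applying this for both $i$ gives $G_1\cong\hat G\cong G_2$. So it suffices to exhibit \emph{non-isomorphic} finite connected graphs $G_1,G_2$ sharing a connected regular cover $G$ whose two Galois groups each have no proper subgroup of finite index.

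\emph{What must be built.} Groups with no proper finite‑index subgroup that act freely and cocompactly on a connected graph are plentiful: any finitely presented group $Q$ of this kind — for instance a finitely presented infinite simple group (Burger--Mozes; or Thompson's group $V$), or a finitely presented group with no nontrivial finite quotient — acts freely and cocompactly on the connected graph that covers a wedge of circles and corresponds to the kernel of an epimorphism from a free group onto $Q$. The real task is to place \emph{two} such actions, by groups without finite‑index subgroups, on a single connected graph $G$ so that the two quotients are non‑isomorphic. The guiding template is an elementary finite phenomenon: the $1$‑skeleton of the $3$‑cube carries two free cocompact actions — by the central involution and by a free rotation of order $4$, which are \emph{incomparable} subgroups of $\Aut$ — whose quotients are the non‑isomorphic graphs $K_4$ (four vertices) and ``two vertices joined by an edge, each bearing a loop'' (two vertices). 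We need to reproduce such a configuration over an infinite $G$, with the finite cyclic deck groups replaced by groups with no finite‑index subgroups.

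\emph{The construction and the main obstacle.} I expect the cleanest route is via square complexes. Let $X=T_a\times T_b$ be a product of regular trees and $G:=X^{(1)}$ its $1$‑skeleton, a connected graph on which every torsion‑free lattice $\Lambda<\Aut(T_a)\times\Aut(T_b)$ acts freely and cocompactly (freely on $X$, hence on $X^{(1)}$ without inverting edges), with quotient graph $(\Lambda\backslash X)^{(1)}$. Choosing two \emph{simple} such lattices $\Gamma_1,\Gamma_2$ whose quotient complexes have different numbers of vertices — equivalently, different covolume — yields the required graphs $G_i:=(\Gamma_i\backslash X)^{(1)}$: they are non‑isomorphic (different vertex counts), both are regularly covered by the single graph $G=X^{(1)}$, and the two Galois groups $\Gamma_1,\Gamma_2$ have no proper finite‑index subgroups; the reduction above then completes the proof. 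The crux is producing such a pair: because a simple group has no proper finite‑index subgroup, $\Gamma_1$ and $\Gamma_2$ cannot be related by passing to a finite‑index subgroup, so one needs genuinely incomparable simple lattices of distinct covolume acting on the \emph{same} product of trees. Establishing the existence of such a pair — or, alternatively, hand‑building $G$ together with two explicit finite quotients and checking the no‑finite‑index‑subgroup property directly — is the one substantial step; everything else is formal.
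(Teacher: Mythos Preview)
Your reduction contains a genuine gap, and it is precisely the gap that the paper isolates and then works to close. You write that from a covering $G\to\hat G$ ``we get $M\le\pi_1\hat G$ (the inclusion holding on the nose inside $\pi_1 G_i$, since $M$, being normal there, equals all of its conjugates)''. But the theorem does \emph{not} assert that the composite $G\to\hat G\to G_i$ agrees with the original regular covering $p_i:G\to G_i$; it only asserts that there exist some coverings $G\to\hat G$ and $\hat G\to G_i$. The image of $\pi_1 G$ in $\pi_1 G_i$ under an arbitrary composite need not be conjugate to $M$ at all --- a free group of infinite rank sits inside a finitely generated free group in many mutually non-conjugate ways --- so normality of $M$ is irrelevant. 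What your argument actually proves is the weaker statement in which the outer triangles are required to commute (the paper states and proves this separately as Proposition~2.1, and explicitly flags that the stronger theorem needs more).

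The paper's proof of the full theorem is structurally different from your proposal. Rather than seeking two Galois groups with no finite-index subgroups, it takes $G_1$ to be the $1$-skeleton of the $5\times5$ subdivision $K_\#$ of the presentation complex $K$ of Wise's BMW group $\Pi$, and $G_2$ the corresponding $1$-skeleton for $F_2\times F_3$; both groups have plenty of finite-index subgroups. The point of the subdivision is that the only reduced $4$-cycles in $K_\#^{(1)}$ are boundaries of $2$-cells, and a counting argument then shows that \emph{any} chain of graph coverings $G\to\hat G\to G_i$ extends canonically to a chain of coverings of the square completions $\square(G)\to\square(\hat G)\to\square(G_i)$. This is exactly the step that replaces your unjustified identification of the image of $\pi_1 G$ with $M$: it forces $\pi_1\square(\hat G)$ to have finite index in both $\Pi$ and $F_2\times F_3$, which is impossible because $\Pi$ has no finite-index subgroup splitting as a direct product of free groups.

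Your construction with two simple Burger--Mozes lattices on the same product of trees could in principle be pushed through, but only after inserting the same subdivision/square-completion mechanism; taking the raw $1$-skeleton $(\Gamma_i\backslash X)^{(1)}$ does not suffice, since these quotient graphs typically contain spurious $4$-cycles not bounding squares, and without controlling those you cannot promote arbitrary graph coverings to square-complex coverings. So the ``one substantial step'' is not the existence of the lattices; it is the passage from graph covers to $2$-complex covers, which your reduction treats as formal but is not.
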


Theorem \ref{t:BMW} illustrates the need to impose conditions on the common covering
of $G_1$ and $G_2$ as one moves away from trees. 
Our proof of this theorem encodes structures from another setting in which the natural analogue of Leighton's Theorem fails,
namely that of compact, non-positively curved squared complexes. 
Key examples in this setting were constructed by Wise \cite{wise} and Burger-Mozes
\cite{BM}.

%in pursuit of generalisations of Leighton's Theorem.
%The condition that we shall focus on is the most natural coarsening of a tree, namely  
%in order to obtain positive results. % $p_i:T\to G_i$.

Henceforth, we shall restrict our attention to the case where the common covering
of $G_1$ and $G_2$ is quasi-isometric to a tree, i.e.~is a {\em{quasitree}}. 
%In other words, the covering must be a {\em{quasitree}}, i.e.~
%We shall focus on the most natural coarsening of the tree condition: a  {\em{quasitree}}
%a locally finite graph that is quasi-isometric to a tree.
In the positive direction, relying on the Symmetry-Restricted Leighton Theorem from \cite{TwogenLeighton}, we shall prove the following result,
for which we need to establish some notation. For a locally finite graph $X$,  we write ${\rm{Aut}}(X)$
to denote the group of automorphisms of $X$ (i.e. the isometries that preserve the vertex set). We equip ${\rm{Aut}}(X)$
with the usual (compact-open) topology, making it a locally-compact Hausdorff group. 
A {\em{uniform lattice}} $\G < \rm{Aut}(X)$ is a discrete subgroup that acts properly and
cocompactly on $X$. 
For $H<\rm{Aut}(X)$, the set of automorphisms that coincide on balls of radius $R$ with elements of $H$ will be denoted by $\mathscr{S}_R(H,X)$; see definition \ref{def22}.

\begin{thm}\label{t:qL}%(Quasitree Leighton's Theorem)\\
Let $X$ be a locally finite quasitree, let $H<\rm{Aut}(X)$, and let $\Gamma_1, \Gamma_2 < H$ be uniform lattices in ${\rm{Aut}}(X)$.
Then for all $R \in \mathbb{N}$ there exists $g \in \mathscr{S}_R(H,X)$ such that $\Gamma_1^g$ is commensurable to $\Gamma_2$ in ${\rm{Aut}}(X)$. 
\end{thm}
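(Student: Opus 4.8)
The plan is to reduce Theorem~\ref{t:qL} to the Symmetry-Restricted Leighton Theorem of \cite{TwogenLeighton} by passing from the quasitree $X$ to a tree on which $\Aut(X)$ acts. After disposing of the degenerate cases in which $X$ is bounded or is quasi-isometric to a line, I would use the fact that a locally finite quasitree carries a canonical structure tree: there is a locally finite tree $T$, a quasi-isometry $f\colon X\to T$, and a continuous homomorphism $\rho\colon\Aut(X)\to\Aut(T)$ for which $f$ is coarsely $\rho$-equivariant, meaning $\sup_{\phi,x} d_T\bigl(f(\phi x),\rho(\phi)f(x)\bigr)<\infty$. (This is the one point where the quasitree hypothesis is essential: a general locally finite graph has no such canonical tree.) The kernel $K:=\ker\rho$ then consists of automorphisms of $X$ with uniformly bounded displacement, so $K$ is compact; moreover, realising $K$ inside a product $\prod_v \mathrm{Sym}(\text{fibre over }v)$ of symmetric groups of bounded size shows that $K$ is a torsion group of bounded exponent.

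Next I would verify that $\bar\Gamma_i:=\rho(\Gamma_i)$ are uniform lattices in $\Aut(T)$ contained in $H':=\rho(H)$: discreteness follows from $\Gamma_i$ being discrete and $K$ compact, cocompactness from coarse surjectivity of $f$, and finiteness of $T$-point stabilisers from properness of the $\Gamma_i$-action (preimages of balls under $f$ being bounded). One then applies the Symmetry-Restricted Leighton Theorem to $T$, $H'$ and $\bar\Gamma_1,\bar\Gamma_2$: for a radius $R'$ to be fixed below it produces $\bar g\in\mathscr{S}_{R'}(H',T)$ with $\bar\Gamma_1^{\bar g}$ commensurable to $\bar\Gamma_2$ in $\Aut(T)$. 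Since $\Gamma_i$ is cocompact, a standard argument shows $\rho(\Aut(X))$ is closed in $\Aut(T)$, so $\bar g\in\overline{\rho(H')}\subseteq\rho(\Aut(X))$ and $\bar g$ lifts to $\Aut(X)$.

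The heart of the argument, and the step I expect to be the main obstacle, is to upgrade a lift of $\bar g$ to a $g\in\mathscr{S}_R(H,X)$ for which $\Gamma_1^g$ remains commensurable to $\Gamma_2$: both properties must be transported back through the possibly infinite kernel $K$. For the symmetry restriction one fixes $R'\ge\lambda R+c$, where $\lambda,c$ absorb the quasi-isometry constants of $f$ together with the diameters of the vertex and edge fibres; then for each radius-$R$ ball $B$ in $X$ there is $h_B\in H$ with $\rho(h_B)=\bar g$ on the image of $B$ in $T$ (a set of radius $\le R'$), and the remaining task is to show that a lift of $\bar g$ can be adjusted within $K$ so that $g|_B=h_B|_B$ holds for all $B$ simultaneously, which one handles by compactness of $K$. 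For commensurability one may first reduce to the case that $\Gamma_1,\Gamma_2$ are torsion-free, hence free (harmless, since a common finite-index subgroup of free finite-index subgroups yields one for the original lattices); then $\rho$ is injective on each $\Gamma_i$ and $\bar\Gamma_i$ acts freely on $T$, and the point is to arrange---by using the residual freedom in the lift $g$ and passing to finite-index subgroups of $\Gamma_1^g$ and $\Gamma_2$---that the relevant first cohomology with coefficients in $K$ vanishes, so that $\Gamma_1^g\cap\Gamma_2$ maps isomorphically onto a common finite-index subgroup of $\bar\Gamma_1^{\bar g}$ and $\bar\Gamma_2$. Combining these yields the desired $g\in\mathscr{S}_R(H,X)$.
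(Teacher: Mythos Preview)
Your overall strategy---reduce to the tree case and invoke the Symmetry-Restricted Leighton Theorem---matches the paper's, but the implementation diverges at the crucial step and this creates a real gap.

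The paper invests in building a \emph{better} tree. Its Theorem~\ref{maketree} produces a locally finite tree $T$ together with a continuous, \emph{genuinely} $\Aut(X)$-equivariant quasi-isometry $f\colon X\to T$ that is \emph{injective on $V(X)$}. Consequently the induced homomorphism $\iota\colon\Aut(X)\to\Aut(T)$ is \emph{injective}, and there is no kernel $K$ to contend with. With injectivity of $f$ on vertices in hand, the paper writes down a homomorphism $\alpha\colon\mathscr{S}_M(\iota(H),T)\to\mathscr{S}_R(H,X)$ (for $M$ large enough that $f(B_R(x))\subset B_M(f(x))$) satisfying $\alpha\circ\iota=\mathrm{id}$: one simply declares $f\bigl(\alpha(\phi)(y)\bigr)=\phi\bigl(f(y)\bigr)$ on vertices, which is well defined precisely because $f|_{V(X)}$ is injective. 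Applying $\alpha$ to the element produced by the tree theorem immediately yields the desired $g\in\mathscr{S}_R(H,X)$ with $\Gamma_1^g$ commensurable to $\Gamma_2$, in two lines---no lifting, no compactness argument, no cohomology.

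Your route, by contrast, has a genuine gap at the lifting step. You assert that $\bar g\in\overline{\rho(H)}\subseteq\rho(\Aut(X))$, but membership in $\mathscr{S}_{R'}(H',T)$ does not imply membership in the topological closure $\overline{H'}$: the symmetry-restricted closure only requires agreement with \emph{some} element of $H'$ on each $R'$-ball, with that element varying from ball to ball, whereas $\overline{H'}=\bigcap_{R'}\mathscr{S}_{R'}(H',T)$ demands this for \emph{every} radius. In general $\mathscr{S}_{R'}(H',T)$ is strictly larger than $\overline{H'}$, and there is no reason for $\bar g$ to lie in $\rho(\Aut(X))$ at all. Even if a lift existed, your ``adjust within $K$ by compactness'' and ``arrange vanishing of $H^1$ with coefficients in $K$'' are statements of obstacles rather than arguments that overcome them. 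The paper sidesteps all of this by eliminating $K$ at the outset; the construction of a faithful tree action (via a canonical wall structure, Sageev's cubulation, and the Hagen--Touikan panel collapse) is the technical heart of the proof and is exactly the ingredient your proposal is missing.
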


As a corollary of Theorem \ref{t:qL}, we deduce that any two finite simplicial complexes with free fundamental groups and a common universal cover must have a common finite cover (Corollary \ref{cor:simplicial}).

Let us concentrate now on the case where $H=\rm{Aut}(X)$ and $\Gamma_1$ and $\Gamma_2$ are torsion-free. In
this case Theorem \ref{t:qL} %, taking $X_i= X/\Gamma_i$, 
tells us that, given finite graphs $X_1$ and $X_2$,
if there is a quasitree $X$ and {\em{regular}} coverings $X\to X_i$, then there is a covering $X\to\hat{X}$ of a finite graph $\hat{X}$ such that $\hat{X}$ covers both $X_1$ and $X_2$. However, our next result shows that this does not remain
true if one drops the requirement that both of the coverings $X\to X_i$ are regular, even if one retains the condition
that $\rm{Aut}(X)$ contains a uniform
lattice (which implies that $\Aut(X)$ is unimodular). 
Note that, just like Theorem \ref{t:BMW}, this theorem excludes the existence of arbitrary coverings $X\to \hat{X}\to X_i$, not just those
for which the compositions are the original coverings $X\to X_i$.

\begin{thm}\label{t:unimodularexample}
There exists a quasitree $X$, finite graphs $X_1$ and $X_2$, and
covering maps
$X\to X_1$ and $X\to X_2$,  with $X\to X_2$ regular, 
for which there is no finite graph $\hat{X}$ fitting into the following diagram of covers.
	 \begin{equation}
	\begin{tikzcd}[
	ar symbol/.style = {draw=none,"#1" description,sloped},
	isomorphic/.style = {ar symbol={\cong}},
	equals/.style = {ar symbol={=}},
	subset/.style = {ar symbol={\subset}}
	]
	&X\ar{d}\\
	&\hat{X}\ar{dl}\ar{dr}\\
	X_1&&X_2	
	\end{tikzcd}
	\end{equation}	
\end{thm}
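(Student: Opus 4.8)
The plan is to build $X$, $X_1$ and $X_2$ from the inside out, working entirely inside the automorphism group of their common universal cover; all graphs will be connected, as one may assume. First I would fix a locally finite tree $T$ and produce a free subgroup $P<\Aut(T)$ with three properties: (a) $P$ is contained \emph{normally} in a torsion-free uniform lattice $\Gamma_2<\Aut(T)$ with $[\Gamma_2:P]=\infty$; (b) $P$ is contained, \emph{not} normally, in a torsion-free uniform lattice $\Gamma_1<\Aut(T)$ with $[\Gamma_1:P]=\infty$; and (c) $X:=T/P$ is a quasitree. Setting $X_i:=T/\Gamma_i$ then gives finite graphs $X_1,X_2$ together with covering maps $X\to X_i$, the map $X\to X_2$ regular because $P\lhd\Gamma_2$ and the map $X\to X_1$ non-regular because $P$ is not normal in $\Gamma_1$. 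The hypothesis that $\Aut(X)$ contains a uniform lattice is then automatic: since $\Gamma_2$ normalises $P$, the deck group $\Gamma_2/P$ embeds into $\Aut(X)=N_{\Aut(T)}(P)/P$ and acts freely and cocompactly on $X$.

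With this set-up in place the remaining task is the obstruction. Taking universal covers (all equal to $T$), a finite graph $\hat X$ fitting into the displayed diagram produces a subgroup $\Lambda:=\pi_1(\hat X)<\Aut(T)$ which, after conjugation, contains $P$ (because $X\to\hat X$) and has finite index in each $\Gamma_i$ (because $\hat X\to X_i$ is a finite-sheeted cover). Thus $\hat X$ exists if and only if there is a subgroup $\Lambda\geq P$ that becomes, after a suitable conjugation, a finite-index subgroup of each of $\Gamma_1$ and $\Gamma_2$; the construction must rule this out. I want to stress what this obstruction is \emph{not}: $\Gamma_1$ and $\Gamma_2$ are necessarily commensurable up to conjugacy in $\Aut(T)$, because the classical Leighton theorem applies to the finite graphs $X_1$ and $X_2$ and produces a common finite cover of \emph{them} — it simply is not covered by $X$. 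The point to be exploited is rather that no common finite-index ``commensurating'' subgroup can be chosen so as to contain $P$. This is exactly where the symmetry restriction in Theorem~\ref{t:qL} is indispensable: were $X\to X_1$ also regular, then $\pi_1(X_1)/P$ and $\pi_1(X_2)/P$ would both be uniform lattices in $H:=\Aut(X)$, Theorem~\ref{t:qL} would apply and, lifting the resulting commensurability back to $\Aut(T)$, would manufacture the required $\hat X$; the construction must therefore live in the gap created by $\Gamma_1$ not being a subgroup of $\Aut(X)$ at all.

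The hard part, where essentially all of the work will be, is producing a concrete $T$ — equivalently, a concrete quasitree $X$ — together with lattices $\Gamma_1,\Gamma_2$ realising (a)--(c) and the obstruction simultaneously. There is a genuine tension: making $\Aut(X)$ non-compact, with a uniform lattice, forces $P$ to be infinitely generated, whereas keeping $X=T/P$ a quasitree forbids $P$ from being ``fat'' in $T$ (generic infinitely generated subgroups, such as $[F_2,F_2]$, produce two-dimensional covers), and all of this must coexist with (b) and with the obstruction surviving. Naive candidates will not work: if $X$ is a tree with a fixed bounded ``gadget'' attached at every vertex — or, more generally, any uniform blow-up of a tree — then the subgroup of $\pi_1(X)$ generated by the gadget loops is normal in $\pi_1$ of every finite cover of $X$, so \emph{every} finite cover of $X$ is regular, and (b) fails. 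I would therefore look for a rigid source of the required asymmetry in the same spirit as the proof of Theorem~\ref{t:BMW}, encoding into a one-dimensional quasitree a phenomenon imported from compact non-positively curved square complexes (the examples of Wise and of Burger-Mozes), and then checking, by a direct finite-index computation in the relevant graph of groups, that the obstruction survives. I expect that last verification — that no common finite cover $\hat X$ slips through the diagram — to be the most delicate step.
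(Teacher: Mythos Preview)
Your group-theoretic framing of the problem is correct, but the proposal contains no construction, and the two concrete judgments you make about where to look both point the wrong way. Your dismissal of ``tree-with-gadgets'' examples is the fatal error, because that is precisely the shape of the construction that works. The paper takes a specific $9$-vertex graph $Y$ admitting a degree-$3$ \emph{non-regular} cover $p_1:Y\to Y_1$ onto a $3$-vertex graph $Y_1$, and builds $X$ as infinitely many copies of $Y$ joined by ``dashed'' edges in a tree pattern. Then $X_2$ is $Y$ with dashed loops attached (so $X\to X_2$ is the obvious regular cover, with free Galois group of rank $18$), while $X_1$ is $Y_1$ with dashed loops. The non-regular covering $X\to X_1$ is assembled piece by piece: each copy of $Y$ in $X$ maps to $Y_1$ via some element of the finite set $\mathcal P$ of all coverings $Y\to Y_1$, and the dashed edges are handled by a local ``extendibility'' condition checked by hand (Lemma~\ref{claims12}). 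Your claim that for a uniform blow-up every finite quotient must be regular rests on the hidden assumption that the gadget $Y$ survives intact in any finite quotient; it need not, exactly because $Y$ itself covers a smaller graph $Y_1$ non-regularly.

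The BMW/Wise direction you propose is a red herring here: those examples live in products of trees, whose $1$-skeleta are not quasitrees, and there is no evident way to push that two-dimensional rigidity into a quasitree. The paper's obstruction is of a completely different flavour --- it is a counting argument in the spirit of the Bass--Kulkarni non-unimodular tree (Example~\ref{exmp:nouniformlattice}). In any putative finite $\hat X$, each solid component is forced to be a copy of $Y$ (since $Y\to Y_1$ has prime degree and $Y\not\cong Y_1$), and such a piece is declared ``left'' or ``right'' according to which of the two triangles in $Y$ its induced map to $Y_1$ wraps around the loop at $a$. Two independent counts of certain dashed edges then force both $3(\#\text{left})=2(\#\text{right})$ and $2(\#\text{left})=2(\#\text{right})$, a contradiction. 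The genuinely delicate step is not a graph-of-groups residual-finiteness calculation but the design of the dashed-edge pattern on $X_2$ so that the cover $X\to X_1$ can actually be built while this numerical inconsistency is simultaneously forced in any $\hat X$.
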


In Section \ref{s:cocompact} we shall prove a version of
Theorem \ref{t:unimodularexample} in which the regularity requirement on $X\to X_2$ is replaced by the weaker
requirement that $\rm{Aut}(X)$ acts cocompactly on $X$, which serves to fix the main ideas in the proof and expose the key phenomena. In 
Section \ref{s:unimodular} we develop the technical refinements needed to prove Theorem \ref{t:unimodularexample}. 

We have already alluded to the importance of the fact that when a tree covers a finite graph, the covering 
is regular.  
When one moves away from trees, regularity is a more subtle issue. We explore this theme in 
the final section of this paper, where we prove the following theorem, in which $\mathscr{C}(X,Y)$
denotes the compact space of local-isometries $X\to Y$ that are covering maps.

\begin{thm}\label{thm:probmeasure}
Let $X$ be a locally finite quasitree such that ${\rm{Aut}}(X)$ contains a uniform lattice.
Suppose that $X$ covers a finite graph $Y$. Then there exists an $\rm{Aut}(X)$-invariant probability measure on $\mathscr{C}(X,Y)$ if and only if there exists a finite cover $\hat{Y}\to Y$ that admits a regular covering map $X\to\hat{Y}$.
\end{thm}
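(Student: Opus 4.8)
Throughout, write $G:=\mathrm{Aut}(X)$, acting on the compact metrizable space $\mathscr{C}(X,Y)$ by $g\cdot p:=p\circ g^{-1}$. The $G$-stabiliser of $p$ is precisely its deck group $\mathrm{Aut}(p)<G$, and $\mathrm{Aut}(g\cdot p)=g\,\mathrm{Aut}(p)\,g^{-1}$. The hypothesis that $G$ contains a uniform lattice makes $G$ a unimodular, totally disconnected, locally compact group acting cocompactly on $X$. The key reformulation is this: every $p\in\mathscr{C}(X,Y)$ factors as $X\xrightarrow{\,\rho_p\,}X/\mathrm{Aut}(p)\xrightarrow{\,\bar p\,}Y$, where $\rho_p$ is a regular covering (deck transformations act freely) and $\bar p$ is a covering; hence the conclusion of the theorem --- the existence of a finite $\hat Y\to Y$ admitting a regular covering $X\to\hat Y$ --- is equivalent to the existence of some $p\in\mathscr{C}(X,Y)$ whose deck group $\mathrm{Aut}(p)$ is cocompact in $G$, in which case one takes $\hat Y:=X/\mathrm{Aut}(p)$, a finite graph covering $Y$. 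I would prove both implications through this reformulation.

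For the \emph{if} direction, suppose a finite cover $\hat Y\to Y$ and a regular covering $q\colon X\to\hat Y$ are given, and let $p_0\in\mathscr{C}(X,Y)$ be the composite $X\to\hat Y\to Y$. The deck group $\Lambda:=\mathrm{Aut}(q)$ acts freely and cocompactly on $X$, so it is a uniform lattice in $G$; since $\Lambda\le\mathrm{Aut}(p_0)$, the group $\mathrm{Aut}(p_0)$ is cocompact, and (containing the uniform lattice $\Lambda$) it is unimodular. Therefore the orbit map induces a homeomorphism of the compact homogeneous space $G/\mathrm{Aut}(p_0)$ onto the closed orbit $G\cdot p_0$; as $G$ and $\mathrm{Aut}(p_0)$ are both unimodular, $G/\mathrm{Aut}(p_0)$ carries a $G$-invariant Radon measure, which is finite because the space is compact, and its normalised push-forward is a $G$-invariant probability measure on $\mathscr{C}(X,Y)$.

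For the \emph{only if} direction, let $\mu$ be a $G$-invariant probability measure on $\mathscr{C}(X,Y)$; by the ergodic decomposition we may assume $\mu$ is ergodic. The set $A:=\{\,p:\mathrm{Aut}(p)\text{ is cocompact in }G\,\}$ is Borel (for fixed $R$, the condition $\mathrm{Aut}(p)\cdot B(x_0,R)=X$ is Borel in $p$, and $A$ is the countable union over $R$) and $G$-invariant (cocompactness depends only on the conjugacy class of $\mathrm{Aut}(p)$), so $\mu(A)\in\{0,1\}$. If $\mu(A)=1$, then $A\neq\varnothing$ and the reformulation supplies the desired $\hat Y$. It therefore remains to show that $\mu(A)=0$ is impossible: no $G$-invariant probability measure on $\mathscr{C}(X,Y)$ can be concentrated on coverings with non-cocompact deck group. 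This is the heart of the matter and the step I expect to be by far the hardest.

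To rule this out I would proceed as follows. Fix a torsion-free uniform lattice $\Gamma<G$ (pass to a finite-index subgroup of the given lattice; this remains a uniform lattice, and since it acts geometrically on the quasitree $X$ it is quasi-isometric to a tree, hence virtually free, so such a finite-index subgroup exists), with finite quotient graph $Z:=\Gamma\backslash X$ and regular covering $X\to Z$. Assume $\mu(A)=0$. Using Poincar\'e recurrence for the action of a fixed element $t\in\Gamma$ acting hyperbolically on $X$, one finds, for $\mu$-a.e.\ $p$, powers with $t^{n_k}\cdot p\to p$, which says that $t^{n_k}$ preserves the $p$-colouring of the ball $B(x_0,n_k)$; together with local finiteness one similarly extracts automorphisms $g_n\to\infty$ in $G$ preserving the $p$-colouring of $B(x_0,n)$. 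The delicate and genuinely quasitree-specific point is to upgrade ``$g_n$ preserves $p$-colours on a large ball'' to ``$g_n$ agrees on a ball of radius $n-C$ with a genuine deck transformation $\delta_n\in\mathrm{Aut}(p)$'', with $C$ depending only on $X$ and $Y$; this should follow from a bounded-cancellation estimate along quasigeodesics in $X$ using that $p$ covers the fixed finite graph $Y$, but it is exactly here that non-regularity of $p$ makes the argument subtle and where I expect most of the work to lie. Granting this, the $\delta_n$ together with their approximate relations, pushed through a compactness argument along a quasigeodesic axis of $t$, produce enough symmetry inside $\mathrm{Aut}(p)$ that, comparing $\mathrm{Aut}(p)$ with $\Gamma$ inside $G$ via the Symmetry-Restricted Leighton Theorem (Theorem~\ref{t:qL}), one concludes that $\mathrm{Aut}(p)$ is itself a uniform lattice --- so $p\in A$, contradicting $\mu(A)=0$. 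The two obstacles I anticipate are the passage from ``$g_n$ preserves colours on a ball'' to ``$g_n$ is a deck transformation on a slightly smaller ball'', and the extraction of a \emph{cocompact} subgroup of $\mathrm{Aut}(p)$ --- rather than merely an infinite one --- from the resulting family of approximate symmetries; Theorem~\ref{t:qL} is the tool that converts the latter into genuine cocompactness.
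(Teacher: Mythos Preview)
Your ``if'' direction is essentially the paper's argument: push a Haar measure on $G/\Lambda$ forward along the orbit map.

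Your ``only if'' direction, however, has a genuine gap at exactly the step you flag as hardest, and the proposed rescue does not work.  The claim that an automorphism $g$ preserving $p$-colours on $B(x_0,R)$ must agree on $B(x_0,R-C)$ with a global deck transformation $\delta\in\mathrm{Aut}(p)$ is simply false for arbitrary $p$: if $\mathrm{Aut}(p)$ is trivial (or has no element moving $x_0$ the required distance) there is no candidate $\delta$, yet such $g$ certainly exist whenever $\Aut(X)$ acts cocompactly.  The phrase ``bounded cancellation along quasigeodesics'' does not address this, because the obstruction is the non-simple-connectedness of $X$: two coverings $X\to Y$ agreeing on an arbitrarily large ball need not agree globally, and there is no uniform $C$ fixing this.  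You are therefore reduced to proving directly that $\mu$-a.e.\ $p$ has cocompact deck group, which is the full content of the theorem rather than an intermediate step.  Your second difficulty is also real: Theorem~\ref{t:qL} takes as input two groups that are \emph{already} uniform lattices and conjugates one to be commensurable with the other; it gives no mechanism for promoting a subgroup to a lattice, so it cannot be used to conclude that $\mathrm{Aut}(p)$ is cocompact from a family of elements $\delta_n$ approximating powers of a single hyperbolic $t$.

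The paper's argument is entirely different and avoids analysing any individual $p$.  One first applies Theorem~\ref{maketree} to obtain an $\Aut(X)$-equivariant quasi-isometry $f\colon X\to T$ to a locally finite tree, then fixes a free uniform lattice $\Gamma<\Aut(X)$ and cuts $X$ into bounded pieces $P(v)=f^{-1}(\mathrm{star}(v))$ and $F(e)=f^{-1}(\mathring e)$ indexed by vertices and edges of $T$.  A ``polyhedral pair'' is a $\Gamma$-orbit of pieces together with a choice of local map to $Y$ that extends to some covering; there are only finitely many such pairs.  The measure $\mu$ assigns to each pair the $\lambda$-probability that a random covering restricts to the given local map, and these real weights automatically satisfy a finite system of linear ``Gluing Equations''.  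A rational-points argument then yields a nonzero \emph{integer} solution, and one glues that many copies of each polyhedral pair along matching face pairs to build $\hat{Y}$ directly, together with explicit maps $X\to\hat Y\to Y$, the first of which is regular with deck group a finite-index subgroup of $\Gamma$.  The essential idea you are missing is that $\mu$ is used only to produce consistent real weights on a finite combinatorial system, not to locate a single covering with large symmetry.
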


The remainder of this paper is structured in accordance with the foregoing discussion. In Section \ref{s:no-cover}
we prove Theorem \ref{t:BMW}. In Section \ref{s:positive} we prove Theorem \ref{t:qL}; we do so by reducing to
the case where $X$ is a tree, which is the Symmetry-Restricted Leighton Theorem
proved by the second author and Gardam-Woodhouse in \cite{TwogenLeighton}. This reduction is based on the
existence of an $\rm{Aut}(X)$-equivariant quasi-isometry from $X$ to a locally finite tree (Theorem \ref{maketree}), which
is established using elements of the theory of CAT$(0)$ cube complexes. (An alternative proof can be based on 
\cite{MSW}; see Remark \ref{r:msw}). We then prove Theorem \ref{t:unimodularexample} in two steps over Sections \ref{s:cocompact} and \ref{s:unimodular}, as described above,
before closing with a proof of Theorem \ref{thm:probmeasure} in Section \ref{s:measures}.

We thank the referee for their careful reading and helpful comments.

\bigskip
\section{The need to control the geometry of the common cover} \label{s:no-cover}

We begin with a weak form of Theorem \ref{t:BMW} that exemplifies the main idea with very simple graphs $G_1, G_2$.
This result does not correspond to diagram (\ref{e:Leigt}) but rather to the stronger conclusion of Leighton's Theorem
that, given coverings $p_i:T\to G_i$ of finite graphs $G_i$ by a tree $T$, there is an automorphism $g$ of $T$ and
a commutative diagram of coverings with $\hat{G}$ finite
\begin{equation}\label{other-diagram}
\begin{tikzcd}[
ar symbol/.style = {draw=none,"#1" description,sloped},
isomorphic/.style = {ar symbol={\cong}},
equals/.style = {ar symbol={=}},
subset/.style = {ar symbol={\subset}}
]
T\ar{dd}[swap]{p_1}\ar{rr}{g}\ar{dr}[swap]{\tau_1}&&T\ar{dl}{\tau_2}\ar{dd}{p_2}\\
&\hat{G}\ar{dl}{\lambda_1}\ar{dr}[swap]{\lambda_2}\\
G_1&&G_2
\end{tikzcd}
\end{equation} 

\begin{prop}\label{prop} There exist connected graphs $G, G_1, G_2$ and covering
maps $p_i: G\to G_i\ (i=1,2)$ such that 
$G_1$ and $G_2$ are finite but there does {\bf{not}} exist a  finite graph $\hat{G}$
with covering maps $\lambda_i:\hat G\to G_i\ (i=1,2)$
and $\tau_i:G\to \hat{G}$
such that $p_i=\lambda_i\circ\tau_i\ (i=1,2)$.
\end{prop}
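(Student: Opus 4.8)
The plan is to produce explicit graphs $G_1, G_2$ with the same universal cover $T$ (a regular tree, or a biregular tree), together with a common cover $G$, such that the existence of a finite common intermediate cover $\hat G$ would force an incompatibility of numerical invariants. The most natural candidates are very small graphs: for instance, take $G_1$ to be a single vertex with some loops (so $\pi_1(G_1)$ is free of rank $k$) and $G_2$ a graph with more vertices but still having free fundamental group and the same universal cover. The key point is that in diagram \eqref{other-diagram} the covering $\tau_i : G \to \hat G$ and $\lambda_i : \hat G \to G_i$ must compose to the \emph{given} maps $p_i$, so $G$ is a specified common cover of $G_1$ and $G_2$, and we get to choose $G$ adversarially.

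The main idea I would exploit is a discrepancy in \emph{degrees of covers}, or equivalently in Euler characteristics. If $\hat G$ is finite and $\lambda_i : \hat G \to G_i$ is a covering of degree $d_i$, then $\chi(\hat G) = d_i \,\chi(G_i)$, so $d_1 \chi(G_1) = d_2 \chi(G_2)$; this already constrains things but is usually satisfiable. The real obstruction should come from the requirement that $p_i$ factors as $\lambda_i \circ \tau_i$: choose $G$ so that the deck-type behaviour of $p_1$ and $p_2$ over a suitable vertex or edge is incompatible with any common quotient. Concretely, I would arrange that $G \to G_1$ and $G \to G_2$ are both \emph{infinite-sheeted} but with the fibres organised so that any finite quotient $\tau_1 : G \to \hat G$ through which $p_1$ factors must "remember" a certain combinatorial pattern (e.g. an aperiodic colouring of a line or of the tree), while any finite quotient through which $p_2$ factors is forced to be periodic in an incompatible way. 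A clean way to engineer this is to let $G$ carry a $\mathbb Z$-action (or a free action of a finitely generated free group) with two different cocompact quotients $G_1, G_2$, and to build into $G$ an aperiodic decoration that survives in any $\hat G$ receiving a map from $G$ compatible with \emph{one} of the structures but obstructs compatibility with the other. I would pick the specific example so that $G_1$ and $G_2$ have the same universal cover — this is what makes the statement nontrivial and connects it to the Leighton-type phenomenon — while using the non-regularity/non-cocompactness freedom to smuggle in the aperiodicity.

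The hard part, and the step I expect to be the main obstacle, is verifying the \emph{nonexistence} of $\hat G$: one must rule out \emph{all} finite common covers fitting into the diagram, not just the "obvious" ones. The argument here will be by contradiction — assume $\hat G$ exists, pull back the aperiodic decoration of $G$ along $\tau_1$ and $\tau_2$, observe that finiteness of $\hat G$ forces a periodicity (since $\hat G$ has finitely many vertices, any vertex-labelling pulled back from $G$ must be constant on $\tau_i$-fibres, hence periodic along the relevant axes), and then derive a contradiction with the aperiodicity that was designed into $G$. Making this airtight requires choosing the decoration and the two quotient structures carefully enough that the two induced periodicities are genuinely incompatible; I would use a standard device such as a Sturmian or otherwise aperiodic bi-infinite word, realised as edge-labels along a line in $G$, with $G_1$ and $G_2$ corresponding to two rationally independent "slopes" of folding that line up. The remaining steps — checking that $G$, $G_1$, $G_2$ are connected, that $p_1, p_2$ are genuine covering maps, and that $G_1, G_2$ are finite — are routine once the example is written down.
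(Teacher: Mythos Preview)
Your proposal does not constitute a proof: it is a strategic outline with no concrete example, and the mechanism you sketch is not the one that works here. In particular, your suggestion to let $G$ carry a cocompact $\mathbb{Z}$-action (or free-group action) and encode an aperiodic Sturmian-type word along an axis runs into an immediate problem: any such action already produces plenty of finite regular quotients of $G$ (namely $G/n\mathbb{Z}$), and these will automatically cover $G_1$ and $G_2$ compatibly with the given $p_i$ whenever the $p_i$ themselves arise from that action. The ``aperiodic decoration'' you envision is not part of the data --- $G$ is just a graph and the $p_i$ are just covering maps --- so it is unclear what would survive in $\hat G$ to produce a contradiction. You also never specify $G_1,G_2,G$, so there is nothing to verify.

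The paper's argument uses a completely different and much cleaner obstruction that you do not mention: the existence of finitely generated infinite groups with \emph{no} proper finite-index subgroups. Take such a group $\Pi_1$ (for instance a Burger--Mozes simple group, or Wise's non-residually-finite BMW group) whose Cayley graph $G$ is isomorphic, as an unlabelled graph, to the Cayley graph of a second group $\Pi_2$ that \emph{does} have proper finite-index subgroups (e.g.\ $F_2\times F_3$, acting on the same product of regular trees). Let $G_1$ be the $k$-petal rose with $p_1:G\to G_1$ the regular cover with Galois group $\Pi_1$. Intermediate covers $G\to\hat G\to G_1$ with $p_1=\lambda_1\circ\tau_1$ correspond to subgroups of $F_k$ containing $\ker(F_k\to\Pi_1)$, i.e.\ to subgroups of $\Pi_1$; since $\Pi_1$ has no proper finite-index subgroups, a finite $\hat G$ forces $\hat G=G_1$. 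Now take $G_2$ to be a \emph{proper} finite cover of the rose corresponding to a proper finite-index subgroup of $\Pi_2$, with $p_2$ the lifted covering. Any $\hat G$ as in the statement must cover $G_2$, hence is strictly larger than $G_1$, contradicting $\hat G=G_1$. The entire argument is group-theoretic; no periodicity or Euler-characteristic analysis is needed.
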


\begin{proof}
Let $\Pi=F_k/N$ be an infinite group that has no proper 
subgroups of finite index,
where $F_k$ is the free group on $\{a_1,\dots, a_k\}$. Let $G_1$ be the $k$-petal rose -- i.e. the graph with one vertex and
$E(G_1)=\{a_1,\dots, a_k\}$. Let $ G$ be the Cayley graph of $\Pi$ with respect to the generators $a_i$ and consider 
the regular covering $p_1:{G}\to G_1$ with Galois group $\Pi$. The intermediate covers ${G}\overset{\tau_1}\to \hat G\overset{\lambda_1}\to G_1$
correspond to the subgroups $H<F_k$ that contain $N$, and $\hat{G}\to G_1$ is a finite covering if and only if $H$ has finite index in $F_k$.
By assumption, there are no proper subgroups of finite index in $F_k/N$ and therefore
no proper subgroups of finite index in $F_k$ that contain $N$. Thus $\hat G=G_1$ if $\hat{G}$ is finite.

This general construction will provide us with examples as described in the 
proposition provided that we can exhibit pairs of  groups
$\Pi_1$ and $\Pi_2$ with the following properties:
\begin{enumerate}
\item there are  finite generating sets $\A_i$  for $\Pi_i$ (of the same cardinality $k$)
such that the corresponding  Cayley graphs $\mathcal{C}(\Pi_i,\A_i)$
(considered as unlabelled graphs) are isomorphic;
\item $\Pi_2$ has a proper subgroup of finite index, but $\Pi_1$ does not.
\end{enumerate}
With such a pair of groups in hand, we can complete the proof by arguing as follows.
Take $G_1$ to be the $k$-petalled rose with covering
$p_1:\mathcal{C}(\Pi_1,\A_1)\to G_1$ and take $G_2^\dagger$ to be the $k$-petalled rose
with covering $p_2^\dagger:\mathcal{C}(\Pi_2,\A_2)\to G_2^\dagger$. Then take $G_2$ to be a proper finite-sheeted
cover of $G_2^\dagger$ corresponding to a proper finite-index subgroup of $F_k$ that contains $\ker (F_k\to \Pi_2)$
and lift $p_2^\dagger$ to $p_2: \mathcal{C}(\Pi_2,\A_2)\to G_2$. $G_2$ is larger than $G_1$, so any $\hat{G}$ as in the proposition must be a proper cover of $G_1$, which would contradict the fact that $\Pi_1$ has no proper finite-index subgroups.

A rich source of examples $(\Pi_1,\Pi_2)$ with the required properties are the BMW
groups, which act freely on a product of two regular trees, with a single orbit of
vertices; the 1-skeleton of the product of trees will be the Cayley graph of $\Pi_i$ for a 
suitable choice of generators.
The direct product of two finitely generated free groups is such a group and this
will serve us as $\Pi_2$. But
there are many other fascinating examples -- see \cite{caprace} for a recent survey.
Of particular note are the examples constructed by Wise in his thesis \cite{wise} %% MB - march
and the examples of Burger and Mozes
\cite{BM}, which are simple.
\end{proof}

\begin{example}\label{ex} 
The following group  was constructed by Wise \cite{wise}.  %% MB - march
The universal cover of the standard 2-complex of this presentation is the
product of a regular tree of valence 4 and a regular tree of valence 6. Thus
the Cayley graph  (as an unlabelled graph) is isomorphic to the standard
Cayley graph of $F_2\times F_3$.  But $\Pi$ does not contain a subgroup of finite index that splits as a
direct product of free groups. %% MB - march
$$
\Pi=
\langle
a, b, x, y, z \mid
 aya^{-1}x^{-1}, byb^{-1}x^{-1}, azb^{-1}z^{-1}, axb^{-1}y^{-1}, bxa^{-1}z^{-1}, 
 bza^{-1}y^{-1}
 \rangle
$$
\end{example}

\subsection{Terminology for graphs, $\#$-subdivision and square-completion}

For the most part, we regard graphs as geometric objects. 
We write $V(X)$ for the vertex set of a graph $X$ and $E(X)$ for its set of (closed, unoriented) edges, and we
metrize $X$ as a geodesic metric space in which each edge has length $1$.
But it is also covenient to 
refer to the combinatorial structure of $X$; for example, we say that $X$ is {\em finite} if $V(X)$ and $E(X)$ are finite.
To work with edge-paths, it is convenient to replace  $E(X)$ by the set of oriented edges $E^{\pm}(X)$, which for each
$e\in E(X)$ contains the pair of local isometries $e^{\pm}:[0,1]\to X$ with image $e$ and $e^+(t)=e^-(1-t)$.
If $\e=e^\pm$ then $\-{\e}:=e^{\mp}$.
A {\em reduced circuit} in $X$ is the loop determined by a finite cyclically-ordered set $(\varepsilon_1,\dots,\varepsilon_n)$ with $\varepsilon_i\in E^{\pm}(X)$
and $\varepsilon_i(1)=\varepsilon_{i+1}(0),\ {\varepsilon_{i+1}}\neq\-{\varepsilon_i}$ for $i=1,\dots,n$, indices $\mod n$. (We specify {\em cyclic} ordering so that
$(\varepsilon_1,\dots,\varepsilon_n) = (\varepsilon_2,\dots,\varepsilon_n,\varepsilon_1)$ etc.) If the vertices $\varepsilon_i(1)$ are all distinct, then the subgraph consisting of these vertices and the unoriented edges corresponding to the $\varepsilon_i$ is called an \emph{$n$-cycle}.

\begin{definition} The {\em square completion} $\square (G)$ of a graph $G$ is the combinatorial 2-complex  
obtained from $G$ by attaching a 2-cell to each reduced circuit of length $4$.  
\end{definition}

\begin{definition}
Let $K$ be a squared 2-complex, i.e.~a combinatorial 2-complex such that the attaching map of each 2-cell is a 
reduced circuit of length $4$. We define $K_{\#}$ to be the 
squared 2-complex obtained from $K$ by introducing new vertices and edges so as to divide each edge of $K$ into a path of
combinatorial length $5$ and each 2-cell (square) into $25$ squares in the obvious $5$-by-$5$ pattern.
$K_{\#}$  is metrized as a piecewise-Euclidean complex where each (new) edge has length $1$ and each (new) $2$-cell is a square.
\end{definition}

The integer $5$ is used in this definition because it restricts the nature of short loops in $K_{\#}$;
the following lemma would fail if we used $2, 3$ or $4$.
We use the standard notation $K^{(1)}$ for the 1-skeleton of a combinatorial complex $K$.

\begin{lemma}\label{l:complete}
If a squared 2-complex $K$ is non-positively curved, then $K_{\#} = \square(K_{\#}^{(1)})$.
\end{lemma}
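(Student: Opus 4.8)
The plan is to show that both inclusions $\square(K_{\#}^{(1)}) \supseteq K_{\#}$ and $\square(K_{\#}^{(1)}) \subseteq K_{\#}$ hold as sets of squares on the common $1$-skeleton $K_{\#}^{(1)}$. The first inclusion is immediate from the definitions: every $2$-cell of $K_{\#}$ is attached along a reduced circuit of length $4$ in $K_{\#}^{(1)}$, so it is one of the squares glued on to form $\square(K_{\#}^{(1)})$. The content is therefore the reverse inclusion: every reduced $4$-circuit in $K_{\#}^{(1)}$ already bounds a $2$-cell of $K_{\#}$. So the heart of the argument is a local analysis of reduced circuits of length $4$ in the subdivided complex.

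First I would set up notation for the subdivision: the vertices of $K_{\#}$ come in three types --- \emph{old} vertices (vertices of $K$), \emph{edge-interior} vertices (the $4$ new vertices subdividing each old edge), and \emph{square-interior} vertices (the $16$ new vertices in the interior of each old square). I would record the possible local pictures: around a square-interior vertex the link is a $4$-cycle entirely inside one old square; around an edge-interior vertex that is \emph{not} adjacent to an old vertex, similarly; around an edge-interior vertex adjacent to an old vertex, and around an old vertex, the link is governed by the non-positive curvature (large girth, no short loops) of $K$. The key point is that the factor $5$ means that any reduced $4$-circuit $c = (\varepsilon_1,\varepsilon_2,\varepsilon_3,\varepsilon_4)$ in $K_{\#}^{(1)}$ has diameter at most $2$, so it is contained in the union of the closed cells of $K$ incident to a single point; and a length-$4$ circuit is too short to ``go around'' an old edge (which now has length $5$) or to pass from the interior of one old square, through an old vertex, into another, without backtracking.

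Concretely, I would argue by cases on the types of the four vertices of the circuit $c$. The main case is when all four vertices lie in the closed star of a single old square $Q$ of $K$; then $c$ is a reduced $4$-circuit in the subdivided square $Q_{\#}$, which is a $5\times 5$ grid, and one checks directly that the only reduced $4$-circuits in a grid are the unit squares --- each of which is a $2$-cell of $K_{\#}$. The remaining cases must be excluded: if $c$ were to use an old vertex $v$ and leave the star of every single old square, it would have to travel along a subdivided edge for distance at least $5 > 4$, or cross $v$ from one subdivided square to a different one, and in the latter situation non-positive curvature of $K$ (no bigons, no reduced $4$-circuits of vertices at $v$ in $K$ itself other than actual squares of $K$, i.e. girth of the link $\geq 4$ with equality only on genuine squares) forces the circuit to have combinatorial length at least $6$ at $v$, a contradiction. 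Chasing the subdivision factor carefully through these link conditions is where the hypothesis ``non-positively curved'' is used, and is the step I expect to require the most care.

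The main obstacle, then, is the bookkeeping in this case analysis: one must enumerate enough of the local structure of $K_{\#}$ near old vertices and old edges to be sure that no reduced $4$-circuit ``escapes'' a single subdivided square, and this is precisely where a smaller subdivision factor ($2$, $3$, or $4$) would allow a short exotic circuit wrapping a subdivided edge or squeezing through an old vertex. Once that is done, the grid computation is routine and the lemma follows. I would present the link conditions at old vertices explicitly (translating Gromov's link condition for the squared complex $K$ into a girth-$\geq 4$ statement), then dispatch the cases in a short paragraph each.
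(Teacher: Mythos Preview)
Your approach is the same as the paper's in spirit---local analysis of reduced $4$-circuits in $K_{\#}^{(1)}$, using the subdivision factor $5$ for locality and the link condition to exclude the bad cases---but your case split as written has a gap. You dichotomise into (i) the circuit lies in the closed star of a single old square $Q$, hence in the $5\times 5$ grid $Q_{\#}$, and (ii) the circuit uses an old vertex. Neither of the two obstructive circuit types actually passes through an old vertex: they sit \emph{near} one, at distance $1$, using edge-interior and square-interior vertices only. One type arises when a single $2$-cell of $K$ has a self-identification on its boundary (a $1$-cycle in the link of an old vertex), so your grid claim in case (i) fails because $Q_{\#}$ is not an honest $5\times 5$ grid; the other arises when two distinct $2$-cells share the same pair of edges at a corner (a $2$-cycle in the link), and this circuit meets the interiors of both subdivided squares, so it lies in neither (i) nor (ii).

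The paper organises the argument the other way round: it first lists, for an \emph{arbitrary} squared complex, the three possible shapes of reduced $4$-circuit in $K_{\#}^{(1)}$ (unit squares; the $1$-cycle type; the $2$-cycle type), and then observes that non-positive curvature---indeed merely requiring that vertex links be simplicial graphs---kills the latter two. This avoids the vertex-type bookkeeping entirely. If you want to keep your organisation, the fix is to replace ``uses an old vertex'' by ``does not lift to the abstract $5\times 5$ grid over any single $2$-cell'', and then argue that any such circuit forces a cycle of length $\le 2$ in the link of the nearby old vertex; note in particular that you only need girth $\ge 3$ in the links (simpliciality), not the full girth $\ge 4$ of the CAT$(0)$ link condition.
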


\begin{proof} For an arbitrary squared 2-complex, there are three types
of reduced circuits of length $4$ in $K_{\#}^{(1)}$, illustrated in Figure \ref{fig:three}. The first type consists
of the boundaries of the 2-cells in $K_{\#}$. The second and third types occur when the link of a vertex in $K$ contains a 1-cycle or 2-cycle respectively. If $K$ is non-positively curved then only circuits of the first type are possible.
\end{proof}

\begin{figure}[H]
	\scalebox{0.6}{
	\begin{tikzpicture}[
		node/.style={minimum size=.5cm-\pgflinewidth, outer sep=0pt}]
		\begin{scope}[shift={(2,0)}]
			\draw[step=1cm,color=black] (0,0) grid (5,5);
			\path[ultra thick] 
			(0,0) edge (5,0)
			(5,0) edge (5,5)
			(5,5) edge (0,5)
			(0,5) edge (0,0)
			(1,2) edge [blue] (1,3)
			(1,3) edge [blue] (2,3)
			(2,3) edge [blue] (2,2)
			(2,2) edge [blue] (1,2);
		\end{scope}
		
		\begin{scope}[shift={(10,0)}]
			\draw[step=1cm,color=black] (0,0) grid (5,5);
			\path[ultra thick] 
			(0,0) edge[postaction={decoration={markings,mark=at position 0.65 with {\arrow[black,line width=1mm]{triangle 60}}},decorate}] (5,0)
			(5,0) edge (5,5)
			(5,5) edge (0,5)
			(0,0) edge[postaction={decoration={markings,mark=at position 0.65 with {\arrow[black,line width=1mm]{triangle 60}}},decorate}] (0,5)
			(1,0) edge [blue] (1,1)
			(1,1) edge [blue] (1,2)
			(1,2) edge [blue] (0,2)
			(0,2) edge [blue] (0,1);
		\end{scope}
		
		\begin{scope}[shift={(18,0)}]
			\draw[step=1cm,color=black] (0,0) grid (10,5);
			\path[ultra thick] 
			(5,0) edge[postaction={decoration={markings,mark=at position 0.65 with {\arrow[black,line width=1mm]{triangle 60}}},decorate}] (0,0)
			(5,0) edge[postaction={decoration={markings,mark=at position 0.65 with {\arrow[black,line width=1mm]{triangle 60}}},decorate}] (10,0)
			(10,0) edge (10,5)
			(10,5) edge (0,5)
			(0,5) edge (0,0)
			(5,0) edge (5,5)
			(4,0) edge [blue] (4,1)
			(4,1) edge [blue] (5,1)
			(5,1) edge [blue] (6,1)
			(6,1) edge [blue] (6,0);
		\end{scope}
	\end{tikzpicture}
}
\caption{\small The three types of reduced circuits of length 4 in $K_{\#}^{(1)}$, highlighted in blue. The edges of $K$ are in bold, and identifications between them are indicated by arrows.}\label{fig:three}
\end{figure}
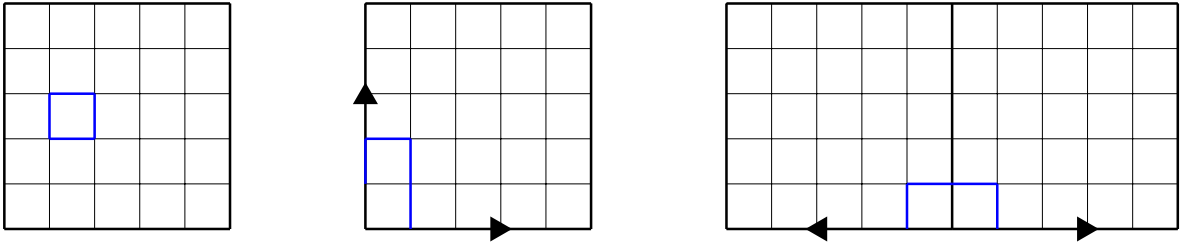

We stated the preceding lemma for non-positively curved complexes because that is our focus in this section, but the
proof shows that it is enough to assume that $K$ is {\em simple} in the sense that links of vertices 
in $K$ are simplicial graphs. 

\subsection{Proof of Theorem \ref{t:BMW}} 
In the proof of Proposition \ref{prop} we used the requirement that $p_i=\lambda_i\circ\tau_i$ to restrict our attention to intermediate covers of $p_i: G\to G_i$, but that is not good enough now  
because we do not require that
the leftmost and rightmost triangles in the analogue of diagram (\ref{other-diagram}) commute.
We shall remedy this by describing a 
more complicated graph $G_1$. %% MB - march 

To this end, we consider the quotient $K$ of a product of regular trees (the universal
cover $\tilde K$ of $K$)
by a BMW group; to be definite we take $K$ to be the standard 2-complex of
the presentation in Example \ref{ex}, which has fundamental group $\Pi$.
We subdivide $K$ to form $K_{\#}$, and subdivide its universal cover $\tilde K_{\#}$
in the same canonical manner. Lemma \ref{l:complete} 
assures us that $K_{\#}$ and $\tilde K_{\#}$ can be recovered from their 1-skeleta by taking the square-completion. 

The vertices of $G_1:=K_{\#}^{(1)}$ can be divided into three types: the original vertex of $K$ has valence $10$;
the new vertices introduced in the interior of 1-cells have valence $6$ or $8$; and the new vertices
introduced in the interior of 2-cells have valence $4$. A key point about the use of the integer
$5$ in the definition of $K_{\#}$ is that the only reduced circuits of length $4$ in $G_1$ are 
the boundary cycles of the 2-cells of $K_{\#}$. It follows that the number of such cycles passing through
each vertex is uniquely determined by the valence of that vertex, and this number is equal to
the number of $4$-cycles passing through a vertex of the same valence in $G:=\tilde K_{\#}^{(1)}$.

Let $\hat{G}$ be a graph and let  $\pi:\hat{G}\to G_1$ be a covering. As $G_1$ contains no reduced circuits of length 
less than $4$,
neither does $\hat{G}$. So the number of $4$-cycles passing through a vertex $v\in \hat{G}$ is no greater than the
number passing through $\pi(v)$. (It will be strictly less if some loop of length $4$ based at $\pi(v)$ does not
lift to a loop based at $v$.) Similarly, if $q:G\to \hat{G}$ is a covering map then the number of reduced
circuits of length $4$ based at $w\in V(G)$ will be no greater than the number passing through $q(w)$. 
In our setting, the number of such loops at $w$ is the same as the number at $\pi\circ q(w)$, because
these vertices have the same valence. Thus $q$ and $\pi$ both induce bijections on the set of reduced circuits
of length $4$ passing through each vertex.  
These circuits are the attaching maps for the 2-cells of the square-completion of each graph.
Therefore the coverings $G\to \hat{G}\to G_1$ extend to 
coverings of combinatorial 2-complexes $\tilde K_{\#}=\square(G)\to \square(\hat{G})\to \square(G_1) = K_{\#}$.
%% MB - march
 
To complete the proof, it suffices to take $G_2$ to be 
the 1-skeleton of the quotient $Q$ of 
$\tilde K_{\#}$ by a direct product of free groups $F_2\times F_3$, acting freely and transitively
on the vertex set of $\tilde K$. Lemma \ref{l:complete} tells us that $Q= \square(G_2)$, so in particular
$\pi_1(\square(G_2))= F_2\times F_3$. %% MB - march
Arguing as in the previous paragraph, we see that any coverings $G\to \hat{G}\to G_2$ will extend to 
coverings of combinatorial 2-complexes $\tilde K_{\#}=\square(G)\to \square(\hat{G})\to \square(G_2) = Q$.
Thus if there were a finite graph $\hat{G}$ that admitted coverings $G\to\hat{G}$ and $\hat{G}\to G_1,G_2$,
then $\pi_1(\square(\hat{G}))$ would be a subgroup of finite index in both $\Pi=\pi_1K_{\#}$ and $F_2\times F_3=\pi_1Q$.
Passing to a subgroup of finite index in $\pi_1(\square(\hat{G}))$  would then 
yield a direct product of free groups that had finite index in $\Pi$, which is a contradiction.  %% MB - march
\qed

 \bigskip
\section{Leighton's Theorem for coverings by quasitrees}\label{s:positive}

As we explained in the introduction, our strategy for proving Theorem \ref{t:qL} is to reduce it to the
case where $X$ is a tree by means of a general construction that promotes actions on quasitrees to actions on trees.
This is a variation on a result of Mosher, Sageev and Whyte \cite{MSW} (see Remark \ref{r:msw}).
In order to understand the proof that we shall present, the reader should be familiar with the basic properties
of CAT$(0)$ cube complexes \cite{BH}, in particular the definition and basic properties of hyperplanes and the duality
between cube complexes and spaces with walls. Key ideas in this theory originate in the work of  
Gromov \cite{gromov}, Sageev \cite{sageev}
and Haglund-Paulin \cite{HP}. There are several variants on the basic construction, adapted to different settings
(e.g.~Roller \cite{roller}, Nica \cite{nica}, Chatterji-Niblo \cite{chatt-niblo}). See
\cite{bestvina} and \cite{sageev-pcmi} for surveys. Wall spaces and group actions
on cube complexes play central roles in major recent advances in group theory and low-dimensional topology (
most dramatically \cite{agol}), following a programme of Wise \cite{riches}.

Our use of these ideas is typical in the subject. In brief, we identify a natural notion of a {\em wall}, which separates our underlying
space (the vertex set $V(X)$ of a quasitree) into two disjoint subspaces (half-spaces); 
the wall is said to {\em separate} $x,y\in V(X)$ if $x$ and $y$ lie in opposite halfspaces. It is required that
there be only finitely many walls separating each pair of points $x,y$. In our situation, 
every pair of distinct points is separated by at least one wall. An {\em ultrafilter} $\omega$
is a collection of half-spaces that gives a coherent choice of side across the collection of all walls:
if the pair of half-spaces $\{H, H'\}$ is a wall, then exactly one of $H,H'$ lies in $\omega$, and
if $\{H_1,H_1'\}$ and $\{H_2,H_2'\}$ are walls with $H_1\subset H_2$ and $H_1\in\omega$ then $H_2\in\omega$.
Each $x\in X$ defines an ultrafilter $\omega_x$ that picks out the half-spaces containing $x$, and the
vertex set of the {\em Sageev cube complex}
dual to the wall structure is the set of ultrafilters at finite distance from these $\omega_x$, where 
the distance between two ultrafilters is the number of walls for which the ultrafilters make a different choice of
half-space. The natural map $x\mapsto \omega_x$ embeds $V(X)$ in this vertex set.
The process of completing this vertex set to a cube complex  depends on the pattern of intersections
of the half-spaces associated to walls and is described in detail in each of the above references.

\begin{thm}\label{maketree}
There is a canonical process that, given a locally finite quasitree $X$ %and a group $G$
on which $G={\rm{Aut}}(X)$ acts cocompactly, %on it by automorphisms,
%group 
will construct a locally finite tree $T$ with an action of $G$ on $T$
and a continuous $G$-equivariant quasi-isometry $f:X\to T$ that restricts
to an injection $V(X)\xhookrightarrow{}V(T)$.  
\end{thm}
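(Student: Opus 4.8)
The plan is to equip the vertex set $V(X)$ with an $\Aut(X)$-invariant wall structure, to take $T$ to be the dual Sageev cube complex, and to let $f$ be the canonical map $x\mapsto\omega_x$ extended affinely over the edges of $X$. Since every ingredient is defined purely from the metric on $X$, the group $G=\Aut(X)$ acts on $T$ and $f$ is $G$-equivariant automatically; the work lies in choosing the walls so that $T$ is a \emph{locally finite tree} while $f$ stays injective on vertices and quasi-isometric.

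First I would record the geometry: by Manning's bottleneck criterion there is a constant $\Delta$ such that the midpoint of any geodesic $[x,y]$ lies within $\Delta$ of every $x$--$y$ path in $X$; in particular $X$ is $\delta$-hyperbolic. I would then declare a complementary pair $\{H,\,V(X)\setminus H\}$ to be a wall --- a \emph{thin cut} --- when the combinatorial boundary of $H$ (the vertices of $H$ adjacent to its complement, together with those of the complement adjacent to $H$) has diameter at most a constant $\Delta_0$ chosen later; to these I add the \emph{spikes} $\{\{v\},\,V(X)\setminus\{v\}\}$, which are thin and which force $f$ to be injective on vertices. The wall-space axioms are then elementary: if a wall separates $x$ from $y$ its boundary meets the geodesic $[x,y]$, and since $X$ is locally finite there are only finitely many sets of diameter $\le\Delta_0$ meeting $[x,y]$, each of which bounds finitely many halfspaces (deleting a finite set from the connected locally finite graph $X$ leaves finitely many components), so finitely many walls separate any two vertices; and a spike separates $v$ from everything else. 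This yields the cube complex $T$ dual to the wall structure, with $G$ acting and $V(X)\hookrightarrow V(T)$ via $x\mapsto\omega_x$.

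The crux is that $T$ is one-dimensional, i.e.\ a tree: no two hyperplanes cross, equivalently no two walls $\{H_1,H_1^{*}\},\{H_2,H_2^{*}\}$ have all four intersections $H_1\cap H_2$, $H_1\cap H_2^{*}$, $H_1^{*}\cap H_2$, $H_1^{*}\cap H_2^{*}$ non-empty. Spikes cross nothing, so the point concerns thin cuts, and --- as the regular $4$-valent tree shows, where the four subtrees at a vertex can be grouped into two pairs in two incompatible ways, each grouping a thin cut crossing the other --- the naive family of all thin cuts is not nested. The remedy is to pass to the canonical nested sub-family produced by Dunwoody's uncrossing procedure: by submodularity, whenever two cuts cross one of their corners is a thin cut of no greater complexity, and replacing crossing pairs by corners, done $G$-equivariantly, terminates in a nested family whose dual is a tree. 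The quasitree hypothesis then enters to ensure that enough cuts survive for $f$ to remain quasi-isometric --- the bottleneck property forces a supply of thin cuts across every geodesic that is linear in its length --- and, at the level of intuition, a crossing pair of cuts records a coarse square in $X$, which a hyperbolic space cannot carry in unbounded amounts. I expect this uncrossing bookkeeping, together with the survival estimate, to be the main obstacle.

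Granting that $T$ is a tree, the remaining points are routine. It is locally finite because $G$ acts on $X$ cocompactly, hence with finitely many orbits of walls, and each wall has boundary of diameter $\le\Delta_0$, so only a bounded number of walls can be incident to a vertex of $T$. The $G$-action on $T$ and the $G$-equivariance of $f$ are built into the construction. Finally $f$ is a quasi-isometry: it is coarsely Lipschitz since adjacent vertices of $X$ are separated by a bounded number of walls, a quasi-isometric embedding since the number of walls separating $x$ from $y$ is bounded below linearly in $d_X(x,y)$ --- again using the bottleneck property --- and has $\Delta_0$-dense image; the affine extension over edges is manifestly continuous. (Alternatively, one can produce $T$ and $f$ without cube complexes by feeding the quasi-action of $G$ on $X$ into the Mosher--Sageev--Whyte machinery \cite{MSW}; see Remark~\ref{r:msw}.)
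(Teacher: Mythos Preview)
Your approach and the paper's diverge at the crucial point. You aim to produce a tree directly by selecting a \emph{nested} family of cuts via Dunwoody uncrossing, so that the dual cube complex is one-dimensional from the outset. The paper does not attempt this: it allows the walls to cross freely, builds the full Sageev cube complex $\Psi$ --- which is typically higher-dimensional --- proves that $\Psi$ is locally finite with finite hyperplanes and a cocompact $G$-action, and then invokes the panel-collapse theorem of Hagen--Touikan \cite{panel} to extract a $G$-invariant tree $T\hookrightarrow\Psi$ with the same vertex set. So the paper trades your uncrossing argument for a black-box appeal to panel collapse.

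The gap in your proposal is precisely the step you yourself flag as ``the main obstacle'': you assert that Dunwoody uncrossing yields a canonical, $G$-equivariant nested subfamily while retaining a linear supply of separating walls, but you do not carry this out, and the difficulties are real. Uncrossing involves choices (which corner to retain), and arranging these choices canonically so that $G$-equivariance survives is delicate; in the classical setting one usually works up to a cocompact group action from the start, but here $G=\Aut(X)$ is not assumed finitely generated or discrete. More seriously, you must show that after discarding crossing walls the surviving family still separates $x$ from $y$ by linearly many walls in $d(x,y)$, and your sketch offers no mechanism for this --- ``the bottleneck property forces a supply of thin cuts'' addresses the original family, not what remains after uncrossing. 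Your coarse-surjectivity claim (``$\Delta_0$-dense image'') is also unjustified: vertices of the dual tree are ultrafilters, not points of $X$, and nothing you have said bounds the distance from an arbitrary ultrafilter to a principal one $\omega_x$; in the paper this comes from cocompactness of the $G$-action on $\Psi$, which needs its own argument. If you want to pursue your route you will need to engage seriously with the equivariance and survival issues; otherwise, switching to panel collapse as the paper does avoids them entirely.
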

\begin{proof}
	The idea of the proof is to define walls on $X$ that enable us embed it in a CAT$(0)$ cube complex $\Psi$, then use
	the panel-collapse procedure of Hagen and Touikan \cite{panel} to canonically retract $\Psi$ onto a tree.
	
	To this end, we define a constant $C_X$ by considering 
	all continuous quasi-isometries $h:X\to Y$ from $X$ to locally finite trees $Y$ that
	send vertices to vertices, and set 
	$$ %\begin{equation}
	C_X:=3+\inf_{h:X\to Y}\sup_{e\in E(Y)}\text{diam}(h^{-1}(e))
	$$ %\end{equation}
	
For each set of edges $S\subset E(X)$, we let
$\hat{S}$ denote the set of midpoints, and we equip the vertex set $V(X)$ with the structure of a wall space by defining 
$\mathcal{W}$ to be the set of all $\hat{S}$ such that 
$X\ssm\hat{S}$ has exactly two connected components and the diameter of the union
of edges in $S$ is less than $C_X$. (More formally, the wall $\hat{S}$ is the partition of $V(X)$ into the intersections
of $V(X)$ with the two connected components of $X\ssm\hat{S}$.)
It is clear that any two points of $V(X)$ are separated by
only finitely many walls. Let $\Psi$ be the cube complex dual to this wall-space; its vertex set is defined in terms of
ultrafilters as above.
\\
	\begin{claim}
The canonical map $\theta:V(X)\to\Psi$ is injective.
	\end{claim}
%\begin{claimproof} 

To verify this basic property,  we must argue that
each pair of distinct vertices $x_1,x_2\in V(X)$ is separated by some $\hat{S}\in\mathcal{W}$. 
The set $S$ of edges incident at $x_1$ has diameter less than $C_X$ and separates $x_1$ from $x_2$.
If $S'\subseteq S$ is a minimal subset such that $\hat{S'}$  separates $x_1$ from $x_2$,
then $\hat{S'}\in\mathcal{W}$. This proves the claim.\\
%\end{claimproof}

The canonical
nature of the construction ensures that the action of $G={\rm{Aut}}(X)$  
on $V(X)$ extends to an action 
on $\Psi$ making $\theta$  equivariant. 
Each $\hat{S}\in\mathcal{W}$ can intersect only a uniformly
bounded number of other walls in $\mathcal{W}$, so the hyperplanes of $\Psi$ are finite and of bounded diameter.
%And each edge of $X$ intersects only finitely many walls, so 
The cocompactness
of the action of $G$ on $X$ implies that there are only finitely many $G$-orbits of walls,
so $G$ acts cocompactly on $\Psi$.\\
\begin{claim}
$\theta:V(X)\xhookrightarrow{}\Psi$ is a quasi-isometry with respect to the combinatorial metric on $\Psi$.
\end{claim}

%\begin{claimproof}
As $G$ acts cocompactly on $\Psi$, every point is within a bounded distance of the image of $\theta$. 
%Thus it is enough to find a constant 
%Firstly, $\theta$ is coarsely surjective because it is $\Gamma$-equivariant and $\Gamma$ acts cocompactly on $\Psi$.
By definition,
for $x_1,x_2\in V(X)$, the combinatorial 
distance between $\theta(x_1)$ and $\theta(x_2)$ is the number of walls $\hat{S}\in\mathcal{W}$ that separate $x_1$ and $x_2$, so we must find upper and lower bounds for this number of walls that are linear functions of $d(x_1,x_2)$.

To this end, we fix a tree $Y$ and  
a continuous quasi-isometry $h:X\to Y$ such that $2+\text{diam}(h^{-1}(e))<C_X$ for all $e\in E(Y)$ and
\begin{equation*}
\tfrac{1}{K}d(x_1,x_2)-K\leq d(h(x_1),h(x_2))\leq Kd(x_1,x_2)+K
\end{equation*}
for all $x_1,x_2\in V(X)$, where $K>1$ is constant.
For each $e\in E(Y)$, the union of the edges in
	$S(e):=\{e'\in E(X)\,|\,h(e')\cap e\neq\emptyset\}$ has
	diameter less than $C_X$ (if non-empty). Moreover, if $X\ssm\wh{S(e)}$ has more than one component, then $\wh{S'(e)}\in\W$ for some $S'(e)\subseteq S(e)$. 
There are at least $\tfrac{1}{K}d(x_1,x_2)-K-2$ edges 
on the geodesic in $Y$ joining $h(x_1)$ to $h(x_2)$, and for each such edge $\widehat{S(e)}$ will separate $x_1$ and $x_2$.
Since ${\rm{diam}}(h(e))\leq2K$ for all edges $e\in E(X)$, the sets  $S(e_1)$ and $S(e_2)$ are disjoint for $e_1,e_2\in E(Y)$ with $d(e_1,e_2)>2K$. Thus we have a
family of at least $(\tfrac{1}{K}d(x_1,x_2)-K-2)/(2K+2)$ disjoint sets $\wh{S'(e)}\in\mathcal{W}$ separating $x_1$ from $x_2$.

To obtain an upper bound on the number of walls separating $x_1$ and $x_2$, we fix
a shortest edge path $P$ from $x_1$ to $x_2$, and note that $\hat{S}$ can only separate $x_1$ and $x_2$ if $S$ includes an edge in $P$. Since $X$ is locally finite and cocompact, there is a uniform bound, $N$ say, on the number of sets $S\subset E(X)$ of diameter less than $C_X$ that contain any given edge $e\in E(X)$. Applying this to each edge in  $P$,
we see that at most $N\, d(x_1,x_2)$ walls separate $x_1$ from $x_2$.\\
%\end{claimproof}\\

\begin{claim}
$\Psi$ is locally finite.
\end{claim}

%\begin{claimproof}
The hyperplanes in $\Psi$ are finite and each 
intersects only finitely many other hyperplanes, so if $\Psi$ were not locally finite, there would be
an infinite family of edges $e_i$ incident at some vertex $v\in\Psi$ with the
dual hyperplanes $H_i$ all disjoint. Let $\hat{S}_i\in\W$ be the wall corresponding to $H_i$ and fix $e'_i\in S_i$. The map $\theta$   sends the endpoints of $e'_i$ to different sides of the hyperplane $H_i$; let $x_i$ be the endpoint such that $H_i$
separates $\theta(x_i)$ from $v$. There are only finitely many $G$-orbits of pairs $(\hat{S},x)\in\mathcal{W}\times V(X)$ such that $x$ is an endpoint of an edge in $S$, so there is a uniform bound on the diameter of $H_i\cup\{\theta(x_i)\}$. And since the $H_i$
are all adjacent to $v$, this bounds the diameter of the set $\{\theta(x_i)\}$. 
But the $H_i$ are disjoint, so the $\theta(x_i)$ are all distinct vertices. This provides us with 
the contradiction that we seek, 
because the previous claim shows that if the diameter of the set $\{\theta(x_i)\}$ is bounded then so is
the diameter of  $\{x_i\}$, contradicting the local finiteness of $X$.\\

By passing to the first cubical subdivision, we can assume that $G$ acts on $\Psi$ without inversions in hyperplanes. Since $\Psi$ has finite hyperplanes and a cocompact $G$-action, we can apply the panel collapse procedure of Hagen and Touikan \cite{panel} 
 to obtain a locally finite tree $T$ with a $G$-equivariant embedding $T\xhookrightarrow{}\Psi$ that is a quasi-isometry and induces a bijection between the vertex sets. (The edges of $T$ will not in general map to edges of $\Psi$, they will just map to CAT$(0)$ geodesic segments between the appropriate vertices.) The fact that $T\xhookrightarrow{}\Psi$ is a quasi-isometry inducing a bijection between the vertex sets is not explicitly stated in the theorem of \cite{panel}, but it is obvious from the proof. By composing the inverse of this bijection with $\theta: V(X)\xhookrightarrow{} \Psi$ 
we get a $G$-equivariant quasi-isometry $f:V(X)\xhookrightarrow{} T$, and by extending linearly along edges we obtain a continuous $G$-equivariant quasi-isometry $f:X\to T$ that proves the theorem.
\end{proof}
\bigskip

\begin{remark}\label{r:not-minimal} 
The tree $T$ constructed above will not be a minimal $G$-tree in general, and the action of $G$ on the geodesic core of $T$
will not be faithful, even though the action of $G$ on $T$ is faithful.
For example, given any finite group $\Omega$, one can manufacture a quasi-line $X$ with
a cocompact action of the wreath product $G=\Omega\wr\Z$ by stringing 
together copies of the suspension of the Cayley graph of $\Omega$ in a linear fashion. The construction described in Theorem \ref{maketree}
will produce a $G$-equivariant map $X\to T$ where the geodesic core of $T$ is a simplicial line on which $\Omega$
will act trivially. The full tree $T$ is obtained from the core by attaching finite subtrees of uniformly
bounded diameter, and it is the action of
$G$ on these finite subtrees that makes $T$ a faithful $G$-tree.  In the case where $X$ is not a quasi-line, a similar
phenomenon still occurs with finite subgroups $\Omega<{\rm{Aut}}(X)$ that have finite support.
\end{remark}

\begin{remark}\label{r:msw}  Theorem \ref{maketree} is reminiscent of the work of Mosher, Sageev and Whyte \cite{MSW}
who were concerned with promoting quasi-actions on bushy quasitrees to genuine actions on trees. Indeed,
for quasitrees with infinitely many ends,  one
can craft a (less-canonical) alternative to Theorem \ref{maketree} by appealing to their work, as we now describe.

Recall that a tree is termed {\em bushy} if it has bounded valence and each vertex is a uniformly bounded distance from a vertex having at least three unbounded complementary components. 
Given a cocompact quasitree $X$ with infinitely many ends and
 $h:X\to Y$ a quasi-isometry to a tree, we modify $Y$ to make it bushy as follows. (The choice of $h:X\to Y$ is what makes this argument less canonical.) Because $X$ is cocompact,
 we can cover $X$ with translates of a finite subgraph $U\subset X$ that has at least three unbounded complementary components. 
 For suitable $R>0$, the $R$-neighbourhood of $h(g.U)$ has at least three unbounded complementary components in $Y$.
 It follows that every vertex of $Y$ is a uniformly bounded distance from one with at least three unbounded complementary components. Each such vertex lies in the core $Y'\subset Y$, which is the union of the geodesic lines. 
 Composing $h$ with the nearest-point retraction $Y\to Y'$, we obtain a quasi-isometry from $X$ to a bushy tree, and
 \cite[Theorem 1]{MSW} promotes the resulting
 quasi-action of $G={\rm{Aut}}(X)$ on $Y'$ to a cocompact isometric action on a locally finite tree $T$.
 This action is coarsely $G$-equivariant in the sense that there is a quasi-isometry $f:X\to T$ such that
  $d(f(g x),g f(x))$ is uniformly bounded independent of $g\in {\rm{Aut}}(X)$ and $x\in X$. 
 
Modifying $f$ by a bounded map, we can assume that it sends vertices to vertices. Then, to make it $G$-equivariant,
one decomposes $V(X)$ into $G$-orbits, and for each orbit representative $v_i$ one defines $f'(g.v_i)$ to be the
centre of the bounded set $gG_i.f(v_i)$, where $G_i<G$ is the stabiliser of $v_i$.
(It may be necessary to subdivide $T$ to ensure that this centre is a vertex.) Extend $f'$ linearly across edges to get a $G$-equivariant map $f':X\to T$.

The final thing to arrange is that $f'$ should be injective on $V(X)$. This can be done by adding various decorations. For
example, one might add a leaf labelled $e_x$ to $f'(x)$ for each $x\in V(X)$, and perturb $f'$ equivariantly so that it sends 
$x$ to the new vertex at the end of $e_x$.
\end{remark}

\subsection{Proof of Theorem \ref{t:qL}}

In what follows $B_R(x)$ will denote the $R$-ball centred on a vertex $x$ of a graph.
\begin{defn}\label{def22}
	Let $X$ be a graph, let $H<{\rm{Aut}}(X)$, and let $R$ be an integer. We define the \emph{$R$-symmetry restricted closure} of $H$ to be: 

$$	\mathscr{S}_R(H,X) := \{ g \in {\rm{Aut}}(X)\mid \forall x \in V(X), \exists h \in H \text{ s.t. $h$ agrees with $g$ on $B_R(x)$}\}
	$$
	It is easy to check that $\mathscr{S}_R(H,X) = \bigcap_{x \in V(X)} H\, {\rm{Fix}}(B_R(x))$, where 
	${\rm{Fix}}(B_R(x))$ is the pointwise stabiliser of $B_R(x)$ in ${\rm{Aut}}(X)$. If $X$ is locally finite then the topological group
	$\rm{Aut}(X)$ has a basis of open
	sets consisting of cosets of pointwise stabilisers of balls, and $\mathscr{S}_R(H,X)$ is a closed subgroup. 	
	A discrete subgroup $\Gamma<{\rm{Aut}}(X)$ is a \emph{uniform lattice} if it acts properly and cocompactly on $X$.
\end{defn} 

We recall the statement of Theorem \ref{t:qL} for the convenience of the reader. In the case where $X$ is a tree, 
this is the {\em Symmetry-Restricted Leighton Theorem} proved in \cite{TwogenLeighton}, and we rely on this basic case.
We do not need to assume that the groups acting are free because any group that acts properly and cocompactly on
a quasitree is virtually free.

\begin{thm}\label{t:qL2}%(Quasitree Leighton's Theorem)\\
Let $X$ be a locally finite quasitree graph, let $H<\rm{Aut}(X)$, 
and let $\Gamma_1, \Gamma_2 < H$ be uniform lattices in ${\rm{Aut}}(X)$.
Then for all $R \in \mathbb{N}$ there exists $g \in \mathscr{S}_R(H,X)$ such that $\Gamma_1^g$ is commensurable to $\Gamma_2$ in ${\rm{Aut}}(X)$. 
\end{thm}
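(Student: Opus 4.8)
The plan is to reduce Theorem~\ref{t:qL2} to the case where $X$ is a tree, which is precisely the Symmetry-Restricted Leighton Theorem of \cite{TwogenLeighton}. The bridge between the two is Theorem~\ref{maketree}: since $\Gamma_1,\Gamma_2<\mathrm{Aut}(X)$ are uniform lattices, $X$ is a locally finite quasitree on which $G:=\mathrm{Aut}(X)$ acts cocompactly, so we obtain a locally finite tree $T$, an action of $G$ on $T$, and a continuous $G$-equivariant quasi-isometry $f:X\to T$ restricting to an injection $V(X)\hookrightarrow V(T)$. First I would observe that, because $f$ is a $G$-equivariant quasi-isometry, the homomorphism $G\to\mathrm{Aut}(T)$ it induces (by conjugation, or rather: the given action of $G$ on $T$) has the property that $\Gamma_1,\Gamma_2$ map to uniform lattices in $\mathrm{Aut}(T)$ — they act properly (the $G$-action on $T$ need not be faithful, but the kernel is finite because a group acting properly cocompactly on a quasitree is virtually free and the kernel acts trivially on a quasi-isometric copy, hence is finite) and cocompactly on $T$. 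One technical point to settle here is that properness on $X$ transfers to properness on $T$: this follows because $f$ is a quasi-isometry, so bounded sets correspond to bounded sets, and because $G$ acts on $T$ with the same point-stabilisers up to finite index.

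Next I would translate the symmetry-restriction data from $X$ to $T$. Given $R\in\mathbb{N}$, I need to produce $g\in\mathscr{S}_R(H,X)$, and the tree theorem will hand me some $g'\in\mathscr{S}_{R'}(\bar H,T)$ for the image $\bar H$ of $H$ in $\mathrm{Aut}(T)$ and a suitable $R'$. The key is that $f$ being a quasi-isometry means an element of $\mathrm{Aut}(T)$ agreeing with $\bar H$ on balls of radius $R'$ corresponds, via $f$, to an element agreeing with $H$ on balls of radius roughly $R'/K - K$ in $X$ (using the quasi-isometry constants $K$); so I choose $R'$ large enough — explicitly $R' \geq K(R+K)$ or similar — that the pullback lands in $\mathscr{S}_R(H,X)$. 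Here I must be a little careful: an automorphism of $X$ and an automorphism of $T$ that correspond under $f$ up to bounded error need not be literally conjugate, so rather than pulling back $g'$ directly I would lift it: since $f:V(X)\hookrightarrow V(T)$ is $G$-equivariant and $g'$ comes from the tree theorem applied to $\bar H$, I can choose a preimage $g\in\mathrm{Aut}(X)$ mapping to $g'$ — or, if $G\to\mathrm{Aut}(T)$ is not surjective, I apply the tree theorem within the image $G\to\mathrm{Aut}(T)$, working with $\bar H$ and $\bar\Gamma_i$, obtain $\bar g$ in the image, lift it to $g\in\mathrm{Aut}(X)$, and verify $g\in\mathscr{S}_R(H,X)$ using the constant estimate above.

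Finally, commensurability transfers back: the tree theorem gives $\bar g^{-1}\bar\Gamma_1\bar g\cap\bar\Gamma_2$ of finite index in both $\bar\Gamma_1^{\bar g}$ and $\bar\Gamma_2$; pulling back along $G\to\mathrm{Aut}(T)$, which has finite kernel, the groups $g^{-1}\Gamma_1 g$ and $\Gamma_2$ have intersection of finite index in each (the finite kernel only changes indices by a bounded factor, and $\Gamma_i$ being discrete means they intersect the kernel trivially anyway, so $\Gamma_i\to\bar\Gamma_i$ is injective). Hence $\Gamma_1^g$ is commensurable to $\Gamma_2$ in $\mathrm{Aut}(X)$, as required.

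The step I expect to be the main obstacle is not any single estimate but the bookkeeping around the non-faithful, non-surjective map $G\to\mathrm{Aut}(T)$ from Theorem~\ref{maketree}: one must ensure (i) that $\bar\Gamma_i$ are genuinely \emph{discrete} uniform lattices in $\mathrm{Aut}(T)$ (discreteness because $\Gamma_i$ is discrete and intersects the finite kernel trivially), (ii) that the symmetry-restriction constant $R$ really does get controlled by a linear function of the tree-side constant via the quasi-isometry $f$ — this needs the explicit quasi-isometry constants from Theorem~\ref{maketree} and a uniform bound relating $B_R(x)\subset X$ to its image under $f$ — and (iii) that a suitable conjugator on the tree side can be \emph{lifted} to a conjugator on $X$ rather than merely approximated. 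Point (iii) is the subtlest: I would handle it by applying the tree theorem with $H$ replaced by the image $\bar H$, so that the conjugator $\bar g$ it produces lies in $\mathscr{S}_{R'}(\bar H,T)$ and hence, by the definition of $\mathscr{S}$, agrees locally with elements of $\bar H$ — which are images of elements of $H$ — and then a compactness/diagonal argument over the finitely many $G$-orbits of balls produces an honest $g\in\mathrm{Aut}(X)$ with the required local agreement and with $g\mapsto\bar g$, possibly after enlarging $R'$ once more. Once these points are pinned down the result follows formally.
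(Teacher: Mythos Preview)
Your overall strategy is correct and matches the paper's: reduce to the tree case via Theorem~\ref{maketree}, apply the Symmetry-Restricted Leighton Theorem of \cite{TwogenLeighton} there, and pull the conjugator back. But you have overcomplicated the execution in two places, and the point you flag as ``subtlest'' has a direct solution that you are missing.

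First, the homomorphism $\iota:G=\mathrm{Aut}(X)\to\mathrm{Aut}(T)$ coming from Theorem~\ref{maketree} is \emph{injective}, not merely finite-kernel. This follows immediately from the fact that $f$ is $G$-equivariant and restricts to an injection $V(X)\hookrightarrow V(T)$: if $g\in\ker\iota$ then $f(gx)=f(x)$ for all $x\in V(X)$, so $gx=x$, so $g=\mathrm{id}$. This disposes of your concerns about the kernel and about discreteness of $\bar\Gamma_i$.

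Second, and more importantly, your step (iii) --- lifting the tree-side conjugator --- does not require any compactness/diagonal argument, and your requirement that the lift satisfy $g\mapsto\bar g$ is both unnecessary and generally unachievable (since $\iota$ is not surjective). The paper instead builds an explicit \emph{retraction} homomorphism
\[
\alpha:\mathscr{S}_M(\iota(H),T)\longrightarrow\mathscr{S}_R(H,X),\qquad \alpha\circ\iota=\mathrm{id},
\]
where $M$ is chosen so that $f(B_R(x))\subset B_M(f(x))$ for all $x\in V(X)$. Given $\phi\in\mathscr{S}_M(\iota(H),T)$ and $y\in V(X)$, pick $h\in H$ with $\iota(h)=\phi$ on $B_M(f(y))$ and declare $\alpha(\phi)(y):=h(y)$. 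This is well-defined because equivariance forces $f(\alpha(\phi)(y))=\phi(f(y))$, and injectivity of $f|_{V(X)}$ then pins down $\alpha(\phi)(y)$ independently of the choice of $h$. The same formula shows $\alpha$ is a homomorphism. Now the tree theorem gives $g\in\mathscr{S}_M(\iota(H),T)$ with $\iota(\Gamma_1)^g$ commensurable to $\iota(\Gamma_2)$; applying the homomorphism $\alpha$ and using $\alpha\circ\iota=\mathrm{id}$ gives $\Gamma_1^{\alpha(g)}$ commensurable to $\Gamma_2$, with $\alpha(g)\in\mathscr{S}_R(H,X)$ by construction. The injectivity of $f$ on vertices is precisely what makes this clean --- it replaces your proposed patching argument entirely.
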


\begin{proof}
By subdividing we may assume that $X$ is simplicial. By Theorem \ref{maketree} there exists a locally finite tree $T$, an injective homomorphism $\iota:\rm{Aut}(X)\to\rm{Aut}(T)$ and a continuous $\iota$-equivariant quasi-isometry $f:X\to T$ which is injective on $V(X)$. As $\Gamma_1$ and $\Gamma_2$ act properly and cocompactly on $X$,
the groups  $\iota(\Gamma_1)$ and $\iota(\Gamma_2)$ act properly and cocompactly on $T$, i.e.
they are uniform lattices in $\rm{Aut}(T)$. Let $M$ be large enough so that for all $x\in V(X)$
\begin{equation}\label{Mdef}
f(B_R(x))\subset B_M(f(x)).
\end{equation}
We then have a homomorphism 
\begin{equation*}
\alpha:\mathscr{S}_M(\iota(H),T)\to\mathscr{S}_R(H,X),
\end{equation*}
satisfying $\alpha\circ\iota=\rm{id}$ on restriction to $\mathscr{S}_R(H,X)$, defined as follows. Take $\phi\in\mathscr{S}_M(\iota(H),T)$. For $x\in V(X)$ there exists $h\in H$ such that $\iota(h)$ agrees with $\phi$ on $B_M(f(x))$. By $\iota$-equivariance of $f$ and (\ref{Mdef}), we get the following commutative diagram.

\begin{equation}\label{xgcommute}
\begin{tikzcd}[
ar symbol/.style = {draw=none,"#1" description,sloped},
isomorphic/.style = {ar symbol={\cong}},
equals/.style = {ar symbol={=}},
subset/.style = {ar symbol={\subset}}
]
B_R(x)\arrow{r}{h}\arrow{d}{f}&X\arrow{d}{f}\\
B_M(f(x))\arrow{r}{\iota(h)}\arrow[bend right]{r}[swap]{\phi}&T
\end{tikzcd}
\end{equation}
We define the restriction of $\alpha(\phi)$ to $B_R(x)$ to agree with $h$. This gives a well-defined description of an automorphism $\alpha(\phi)\in\rm{Aut}(X)$, because for each vertex $y\in B_R(x)$ the diagram (\ref{xgcommute}) implies that $f(\alpha(\phi)(y))=\phi f(y)$; this formula, which is independent of $x$ and $h$, uniquely determines $\alpha(\phi)(y)$ since $f$ is injective on $V(X)$. And since $\alpha(\phi)$ agrees on $R$-balls with elements of $H$, we get that $\alpha(\phi)\in\mathscr{S}_R(H,X)$. It is also clear that $\alpha\circ\iota=\rm{id}$ on restriction to $\mathscr{S}_R(H,X)$.

We know from \cite{TwogenLeighton} that the theorem is true when $X=T$, so
there exists $g\in\mathscr{S}_M(\iota(H),T)$  such that $\iota(\Gamma_1)^g$ is commensurable to $\iota(\Gamma_2)$ in $\rm{Aut}(T)$. Therefore $\alpha\circ\iota(\Gamma_1)^{\alpha(g)}=\Gamma_1^{\alpha(g)}$ is commensurable to $\alpha\circ\iota(\Gamma_2)=\Gamma_2$ in $\rm{Aut}(X)$.
\end{proof}

\bigskip
As a consequence of Theorem \ref{t:qL2}, we obtain a version of Leighton's Theorem for simplicial complexes with free fundamental group.

\begin{cor}\label{cor:simplicial}
	Let $X_1$ and $X_2$ be finite simplicial complexes with free fundamental groups and a common universal cover. Then $X_1$ and $X_2$ have a common finite cover.
\end{cor}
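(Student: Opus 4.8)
The plan is to reduce the statement about simplicial complexes to a statement about graphs, so that Theorem \ref{t:qL2} can be applied. First I would observe that a finite simplicial complex with free fundamental group is homotopy equivalent to a finite graph (a wedge of circles), but homotopy equivalence is not enough here: we need a \emph{common universal cover}, which is a far more rigid condition, so the reduction must be done at the level of covering spaces rather than homotopy types. The key object to extract from each $X_i$ is a graph that is quasi-isometric to the universal cover $\widetilde{X}$ and on which the deck group acts. The natural candidate is the $1$-skeleton $X_i^{(1)}$ together with its universal cover; since $X_i$ is a finite complex with free $\pi_1$, the inclusion $X_i^{(1)} \hookrightarrow X_i$ is $\pi_1$-surjective, and one can check that after passing to a suitable finite cover (or by an elementary argument using that $\pi_1 X_i$ is free) the $1$-skeleton already carries all of $\pi_1$. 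The universal cover of $X_i^{(1)}$ is then a locally finite graph $Y_i$ on which $\pi_1 X_i$ acts freely and cocompactly, and $Y_i$ is quasi-isometric to $\widetilde{X}$; in particular $Y_i$ is a quasitree (being quasi-isometric to a space with a free cocompact group action by a virtually free — indeed free — group on a simply connected complex, or directly because $\pi_1 X_i$ is free).

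Next I would identify $Y_1$ and $Y_2$ with a single locally finite quasitree $X$ inside $\operatorname{Aut}(X)$. The subtlety is that the two universal covers $Y_1, Y_2$ are each quasi-isometric to $\widetilde{X}$ but need not be isomorphic as graphs. To fix this, I would take $X := \widetilde{X}^{(1)}$, the $1$-skeleton of the common universal cover, which is canonically the universal cover of both $X_1^{(1)}$ and $X_2^{(1)}$ once we choose the covering maps compatibly; then $\Gamma_i := \pi_1 X_i$ acts on $X$ as a group of deck transformations, giving an embedding $\Gamma_i \hookrightarrow \operatorname{Aut}(X)$ as a uniform lattice (the action is free because $X_i$ is a genuine quotient complex, and cocompact because $X_i$ is finite). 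Here one uses that a finite simplicial complex with free fundamental group has a $1$-skeleton whose universal cover recovers the $1$-skeleton of the universal cover of the whole complex — essentially because $2$-cells and higher cells of $\widetilde{X}$ are attached to loops in $\widetilde{X}^{(1)}$ and contribute nothing new to the $1$-skeleton. Having set this up, apply Theorem \ref{t:qL2} with $H = \operatorname{Aut}(X)$ and $R = 0$ (or any $R$): there exists $g \in \operatorname{Aut}(X)$ such that $\Gamma_1^g$ and $\Gamma_2$ are commensurable in $\operatorname{Aut}(X)$, i.e. $\Gamma_1^g \cap \Gamma_2$ has finite index in both.

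Finally I would translate commensurability back into the existence of a common finite cover. Replacing $\Gamma_1$ by its conjugate $\Gamma_1^g$ (which corresponds to replacing the identification of $X$ with $\widetilde{X}$ by a twisted one — legitimate since $g$ is a graph automorphism), the group $\Lambda := \Gamma_1^g \cap \Gamma_2$ is a finite-index subgroup of both $\Gamma_1^g$ and $\Gamma_2$. The subgroup $\Lambda < \Gamma_i$ corresponds to a finite-sheeted cover $\widehat{X}_i \to X_i$, and since $\Lambda$ acts freely and cocompactly on the common universal cover $\widetilde{X}$ itself (not merely on $X$, because finite-index subgroups of deck groups act with the same orbit structure on the full complex), the quotient $\widetilde{X}/\Lambda$ is a single finite complex covering both $X_1$ and $X_2$. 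I expect the main obstacle to be the bookkeeping in the previous paragraph: verifying carefully that passing to the $1$-skeleton loses no fundamental-group information and that an automorphism of $X = \widetilde{X}^{(1)}$ realizing the conjugator $g$ extends to (or is induced by) an automorphism of the full complex $\widetilde{X}$, so that the commensurability obtained at the level of graphs genuinely yields a common finite cover of the original \emph{complexes}. If the extension of $g$ to $\widetilde{X}$ is not automatic, one can instead argue directly that $\Lambda < \operatorname{Aut}(X)$ acting on $X$ already determines finite covers of the $X_i^{(1)}$, and then fill in the higher-dimensional cells functorially, since they are attached along loops that are carried bijectively by the covering maps.
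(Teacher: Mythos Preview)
Your overall strategy is right, and it is essentially the same as the paper's: pass to a graph that is a quasitree, embed the two deck groups as uniform lattices in its automorphism group, apply Theorem \ref{t:qL2}, and pull the resulting commensurability back to a common finite cover of the complexes. But the specific graph you choose, $X=\widetilde{X}^{(1)}$, creates exactly the obstacle you flag and do not resolve.

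First a minor correction: $\widetilde{X}^{(1)}$ is \emph{not} the universal cover of $X_i^{(1)}$ in general. The universal cover of a graph is a tree, whereas $\widetilde{X}^{(1)}$ will contain cycles whenever $\widetilde{X}$ has a $2$-simplex. This is harmless for your argument --- all you need is that $\Gamma_i$ acts freely and cocompactly on $\widetilde{X}^{(1)}$, which it does --- but it is worth getting right.

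The real gap is the one you identify: the conjugator $g$ lies in $\Aut(\widetilde{X}^{(1)})$, and there is no reason for it to extend to an automorphism of the full complex $\widetilde{X}$. Without that, $\Gamma_1^g$ need not act on $\widetilde{X}$ at all, and your fallback of ``filling in higher cells functorially'' does not rescue the situation: lifting the cells of $X_1$ along the graph covering $\widetilde{X}^{(1)}/\Lambda\to X_1^{(1)}$ will produce \emph{some} complex covering $X_1$, but it will typically not coincide with $\widetilde{X}/\Lambda$, since the graph map is built from $g^{-1}$ and $g^{-1}$ need not carry simplices to simplices. So you get finite covers of $X_1$ and of $X_2$ with the same $1$-skeleton but different cell structures, which is not a common cover.

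The paper's fix is to choose the auxiliary graph more carefully so that its automorphism group is canonically identified with $\Aut(\widetilde{X})$ itself. Concretely, take the $1$-skeleton of the barycentric subdivision of $\widetilde{X}$ and attach a leaf to each original vertex of $\widetilde{X}$ (to distinguish them combinatorially from the barycentres). An automorphism of this decorated graph must preserve the leaf-marked vertices and the barycentres, hence permutes the simplices of $\widetilde{X}$ respecting face relations, so it comes from a unique simplicial automorphism of $\widetilde{X}$; conversely every simplicial automorphism induces one on the graph. With this choice the conjugator $g$ produced by Theorem \ref{t:qL2} automatically lives in $\Aut(\widetilde{X})$, and the rest of your argument goes through verbatim: $\Lambda=\Gamma_1^g\cap\Gamma_2$ acts on $\widetilde{X}$ and $\widetilde{X}/\Lambda$ is the desired common finite cover.
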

\begin{proof}
	Let $\tilde{X}$ be the universal cover of $X_1$ and $X_2$, and let $\G_1,\G_2<\Aut(\tilde{X})$ be the corresponding deck-groups. To obtain a common finite cover of $X_1$ and $X_2$, we must find $g\in\Aut(\tilde{X})$ such that $\Gamma_1^g$ is commensurable to $\Gamma_2$. Since $\G_1$ and $\G_2$ are free groups, we see that $\tilde{X}$ is a quasitree. Let $Y$ be the quasitree graph obtained from the 1-skeleton of the barycentric subdivision of $\tilde{X}$ by attaching a leaf to each vertex of $\tilde{X}$ (to distinguish them from the new vertices of the barycentric subdivision). It is easy to check that there is a natural isomorphism $\Aut(Y)\cong\Aut(\tilde{X})$ of topological groups, and so the result follows by applying Theorem \ref{t:qL2} to $Y$.
\end{proof}

 \bigskip
\section{Leighton's Theorem fails for cocompact quasitrees}\label{s:cocompact}

Our purpose in this section is to prove the following special case of Theorem \ref{t:unimodularexample}. Our reasons
for proving this special case first were discussed in the introduction. A feature of the construction in this
proof is that the quasitree $X$ is cocompact (i.e.~there are only finitely many ${\rm{Aut}}(X)$-orbits of edges and
vertices) but ${\rm{Aut}}(X)$ does not contain a uniform lattice.
The proof ends with a
counting argument that draws inspiration from the following elementary 
argument based on \cite{BK} Example 4.12(1). 

\begin{example}\label{exmp:nouniformlattice}
Let $T$ be the simplicial tree whose vertex set $V(T)$ is divided into three infinite sets $V_2, V_3, V_4$
such that each $v\in V_i$ has valence $i$, 
no pair of adjacent vertices lies in the same $V_i$, 
each $v\in V_2$ is adjacent to one vertex in $V_3$ and one in $V_4$, and each vertex in $V_3\cup V_4$
is adjacent to a unique vertex in $V_2$. One can show that ${\rm{Aut}}(T)$ acts cocompactly on $T$
with 3-orbits of edges and three orbits of vertices (namely $V_2, V_3, V_4$). But $T$ does not
cover any finite graph, for if $Y$ were such a finite graph and $n_2, n_3, n_4$ were the number of vertices
of valence $2, 3, 4$ respectively, then by counting neighbours of vertices of valence $2$ we would have
$n_2=n_3=n_4$, whereas by counting valence-3 neighbours of valence-4 vertices we would have $2n_3 = 3n_4$.
\end{example}

\begin{thm}\label{thm:cocompactexample}
	There exists a cocompact quasitree graph $X$ and finite graphs $X_1$ and $X_2$ such that $X$ covers $X_1$ and $X_2$, but
	 there does not exist a finite graph $\hat{X}$ fitting into the following diagram of covers.
	 \begin{equation}\label{nohatX}
	\begin{tikzcd}[
	ar symbol/.style = {draw=none,"#1" description,sloped},
	isomorphic/.style = {ar symbol={\cong}},
	equals/.style = {ar symbol={=}},
	subset/.style = {ar symbol={\subset}}
	]
	&X\ar{d}\\
	&\hat{X}\ar{dl}\ar{dr}\\
	X_1&&X_2	
	\end{tikzcd}
	\end{equation}	
\end{thm}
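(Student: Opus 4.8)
The plan is to build the three graphs together with the two covering maps $X\to X_1$ and $X\to X_2$ explicitly, and then to obstruct the existence of a common finite cover $\hat X$ by a counting argument in the spirit of Example \ref{exmp:nouniformlattice} (and of \cite[Example 4.12(1)]{BK}). The guiding principle is the one used there: a covering map of graphs is a local isometry, so it carries the isomorphism type of each ball of radius $r$ onto the same type downstairs. Consequently every finite graph covered by $X$ has its vertices partitioned into the finitely many ``local types'' occurring in $X$, and the adjacency pattern of $X$ forces, by double-counting edges, rigid linear relations among the numbers $n_\tau$ of vertices of each type. In Example \ref{exmp:nouniformlattice} these relations are mutually inconsistent, so no finite cover exists at all; here the aim is to arrange the local structure of $X$ so that finite covers do exist, but so that the particular ways in which $X$ folds onto $X_1$ and onto $X_2$ cannot be reconciled by any intermediate finite graph.

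For the construction I would start from a locally finite tree $T$ of the kind appearing in Example \ref{exmp:nouniformlattice} --- one on which $\Aut(T)$ acts cocompactly with several vertex orbits of differing valences --- and form $X$ by attaching to the vertices of $T$ (or of a subdivision of $T$) bounded decorations, namely cycles and finite subgraphs of uniformly bounded diameter, placed $\Aut(T)$-equivariantly, and attached with sufficiently non-canonical monodromy data that $X$ is not simply a product-like thickening of $T$. Because the decorations are bounded and of finitely many isomorphism types, $X$ is locally finite, remains quasi-isometric to $T$ (hence is a quasitree), and retains a cocompact automorphism group. The decorations serve two purposes: a decorating cycle of length $\ell$ may be wound onto a shorter cycle whose length divides $\ell$, which is what lets $X$ cover a finite graph at all; and the combination of this winding freedom with the freedom in folding the core $T$ is what produces two genuinely distinct finite quotients $X_1$ and $X_2$. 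Exhibiting the covering maps $X\to X_i$ is the delicate combinatorial point, since a covering must preserve the valence --- indeed the entire local structure --- at every vertex, so the sizes of the decorations, the windings used, and the way the decorations meet the core must all be chosen compatibly.

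Finally, suppose $\hat X$ is a finite graph fitting into diagram (\ref{nohatX}). Since $X\to\hat X$ is a covering, $\hat X$ has exactly the local types and local adjacency pattern of $X$; since $\hat X\to X_i$ is a covering of finite connected graphs it is finite-sheeted, so each covering-invariant count attached to $\hat X$ --- the number of vertices, edges, or short reduced circuits of a prescribed local type --- is a fixed multiple of the corresponding count for $X_i$. Writing out the linear relations among these counts forced by the local structure of $X$, exactly as $n_2=n_3=n_4$ and $2n_3=3n_4$ are forced in Example \ref{exmp:nouniformlattice}, and imposing simultaneous proportionality to the counts of $X_1$ and of $X_2$, one is left with a system that has no nontrivial solution; hence no such $\hat X$ exists.

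The step I expect to be the main obstacle is the construction, and within it the simultaneous enforcement of three competing demands: that $X$ genuinely be a quasitree with cocompact automorphism group; that $X$ actually cover two different finite graphs $X_1$ and $X_2$, which tightly constrains the valences and the combinatorics of the decorations and their windings; and that the data recorded by $X_1$ and $X_2$, together with the rigidity imposed on any $\hat X$ by being covered by $X$, be arithmetically irreconcilable, so that the closing count truly bites. Example \ref{exmp:nouniformlattice} is the template for how to engineer such an arithmetic clash; the real work is in transplanting it into a quasitree that, unlike the tree $T$ there, does admit finite covers.
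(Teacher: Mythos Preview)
Your proposal identifies the correct \emph{shape} of the argument --- an explicit construction followed by a double-counting obstruction in the spirit of Example \ref{exmp:nouniformlattice} --- but the construction you sketch is not the one that works, and the gap is not merely one of detail.

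You propose to start from the Bass--Kulkarni tree $T$ of Example \ref{exmp:nouniformlattice} and attach bounded decorations equivariantly. The difficulty is that $T$ covers no finite graph, and hanging cycles or finite gadgets off its vertices does not repair this: any covering $X\to X_i$ restricts on the ``core'' to a folding of $T$ onto a finite graph, which does not exist. Your remark that ``a decorating cycle of length $\ell$ may be wound onto a shorter cycle'' does not address the obstruction in the core. The paper's $X$ is built quite differently: one takes a finite graph $Y$ (nine vertices) that admits degree-$3$ covers $p_i:Y\to Y_i$ onto two \emph{non-isomorphic} graphs, indexes copies of $Y$ by the vertices of a regular $18$-valent tree (which certainly does cover finite graphs), and joins neighbouring copies by ``dashed'' edges whose endpoints follow six prescribed periodic patterns. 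The graphs $X_i$ are $Y_i$ with a dashed loop at each vertex, and the patterns are chosen precisely so that both coverings $X\to X_i$ exist.

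Your counting argument also misses the mechanism that makes it bite. You want to compare ``local types'' in $\hat X$ directly against $X_1$ and $X_2$, but that is too coarse. The paper first uses the fact that $3$ is prime and $Y_1\not\cong Y_2$ to force every connected solid component of $\hat X$ to be a copy of $Y$ itself (diagram (\ref{nohatY}) has no nontrivial intermediate). Only then can one label vertices of $\hat X$ by their $\Aut(Y)$-orbit and track the dashed circuits: one of the six patterns reads $1,1,6,1,1,6,\dots$, forcing twice as many $1$-labelled vertices as $6$-labelled ones in $\hat X$, while each copy of $Y$ contributes exactly one of each. This contradiction lives inside the copies of $Y$, not in a comparison with $X_1$ and $X_2$; the prime-degree rigidity step is the idea your outline lacks.
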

\begin{proof} We will build $X_1$ and $X_2$ by augmenting a pair of finite graphs $Y_1$ and $Y_2$ 
for which there is a finite graph $Y$ and covering maps $p_i:Y\to Y_i$ for $i=1,2$.
	We will then build $X$ by assembling infinitely
	many copies of $Y$ in a tree-like manner. The following figure portrays the covering maps $p_i$,
	which are determined by giving three labels at each vertex of $Y_1$ and $Y_2$ describing the three preimages 
	of that vertex in $Y$.

	\begin{figure}[H]
		\centering	
		\scalebox{0.6}{
	\begin{tikzpicture}[auto,node distance=2cm,
	thick,every node/.style={circle,draw,font=\small},
	every loop/.style={min distance=2cm},
	hull/.style={draw=none},
	]
	\tikzstyle{label}=[draw=none,font=\Huge]

	%Y
	\begin{scope}[scale=0.8]
	\node (6) {6};
	\node (5) [left of=6] {5};
	\node at (-2,4) (3a) {3a};
	\node at (-2,-4) (3b) {3b};
	\node at (2,2) (4a) {4a};
	\node at (2,-2) (4b) {4b};
	\node at (2,4) (2b) {2b};
	\node at (2,-4) (2a) {2a};
	\node at (4,0) (1) {1};
	\node[hull] (1Y) at (-5,-3) {};
	\node[hull] (2Y) at (5,-3) {};
	\node[label] (l) at (-6,2) {$Y$};
	
	\node[label] (p1) at (-8,-3.8) {$p_1$};
	\node[label] (p2) at (8,-3.8) {$p_2$};
	
	\path
	(3a) edge [bend right=50] (3b)
	(3a) edge [bend right=90] (3b)
	(3a) edge (5)
	(3b) edge (5)
	(5) edge [bend left] (6)
	(5) edge [bend right] (6)
	(3a) edge (2b)
	(3b) edge (2a)
	(6) edge (4a)
	(6) edge (4b)
	(4a) edge (4b)
	(4a) edge [bend left] (2b)
	(4a) edge [bend right] (2b)
	(4b) edge [bend left] (2a)
	(4b) edge [bend right] (2a)
	(1) edge [bend right] (2b)
	(1) edge [bend left] (2a)
	(1) edge [loop right] (1);	
	\end{scope}
	
	%Y1
	\begin{scope}[shift={(-10,-7)},scale=0.8]
	\node[ellipse] at (0,2) (2a) {2a,3a,4a};
	\node[ellipse] at (0,-2) (2b) {2b,3b,4b};
	\node[ellipse] at (2,0) (1) {1,5,6};	
	\node[hull] (Y1) at (1.5,2.5) {};
	\node[label] (l) at (-3,0) {$Y_1$};
	
	\path
	(2a) edge [bend left] (2b)
	(2a) edge [bend right] (2b)
	(2a) edge (2b)
	(2a) edge [bend left] (1)
	(2b) edge [bend right] (1)
	(1) edge [loop right](1);
	\end{scope}
	
	%Y2
	\begin{scope}[shift={(10,-7)},scale=0.8]
	\node[ellipse] at (0,2) (1) {1,3a,3b};
	\node[ellipse] at (0,0) (5) {2a,2b,5};
	\node[ellipse] at (0,-2) (6) {4a,4b,6};
	\node[hull] (Y2) at (-1.5,2.5) {};
	\node[label] (l) at (3,0) {$Y_2$};
	
	\path
	(1) edge [loop above] (1)
	(1) edge [bend left] (5)
	(1) edge [bend right] (5)
	(5) edge [bend left] (6)
	(5) edge [bend right] (6)
	(6) edge [loop below] (6);
	\end{scope}
	
	\draw[draw=black,fill=black,-triangle 90, ultra thick] (1Y) -- (Y1);
	\draw[draw=black,fill=black,-triangle 90, ultra thick] (2Y) -- (Y2);
	
	\end{tikzpicture}
}
\caption{\small The coverings $p_i:Y\to Y_i$.}
\end{figure}
	
The graphs $X_1$ and $X_2$ are built from $Y_1$ and $Y_2$ by attaching both ends of a 
new subdivided edge to each vertex (i.e. each vertex of $Y_i$ has two edges emanating from it and they meet
at a valence-2 vertex of $X_i\ssm Y_i$). For ease of depiction in figures we will draw these subdivided edges as dashed edges, and refer to them as dashed edges for the rest of the proof. We will refer to the other edges as solid edges. $X$ is constructed by joining copies of the graph $Y$ by dashed edges in a tree-like structure as suggested by the following figure.
	\begin{figure}[H]
		\centering
	\scalebox{0.6}{
		\begin{tikzpicture}[auto,node distance=2cm,
		thick,every node/.style={circle,draw,font=\small},
		every edge/.style={thick,draw=blue,dashed},
		hull/.style={draw=none},
		scale=2
		]
		\tikzstyle{label}=[draw=none,font=\Huge]
		
		\node (1) {$Y$};
		\node (2) at (0,1) {$Y$};
		\node (3) at (1,1) {$Y$};
		\node (4) at (2,1) {$Y$};
		\node (5) at (0,2) {$Y$};
		\node (6) at (1,2) {$Y$};
		\node[label] at (3,1) {$\cdots$};
		\node[label] at (2,2) {$\cdots$};
		
		\node (7) at (0,-1) {$Y$};
		\node (8) at (1,-1) {$Y$};
		\node[label] at (2,-1) {$\cdots$};
		
		\node[label] at (-1,0) {$X$};
		
		\path
		(1) edge (2)
		(1) edge (3)
		(1) edge (4)
		(2) edge (5)
		(2) edge (6)
		(1) edge (7)
		(1) edge (8);
		
		\end{tikzpicture}
	}
\caption{\small Tree-like structure of $X$.}
\end{figure}
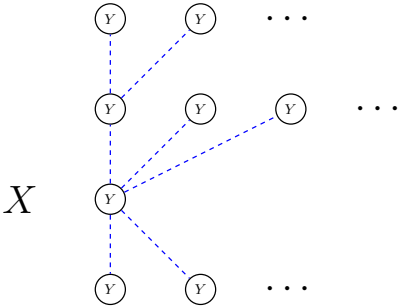

	 In more detail, we take an $18$-valent regular tree $T$, index copies of $Y$ by the vertices of this tree, and
	 at each vertex of each copy of $Y$ we add two dashed edges connecting to neighbouring copies of $Y$.
	  %since $Y$ has nine vertices, the tree obtained from $X$ by collapsing each copy of $Y$ to a point is
	  %a regular $18$-valent tree. 
	  To complete the definition of $X$ (up to graph isomorphism) we must specify the labels of the vertices at the endpoints of each dashed edge. We do this so that for each bi-infinite line of dashed edges in $X$, the labels of the vertices will have one of the following six repeating patterns:
	\begin{align}\label{pattern}
	1,1,6,1,1,6\\\nonumber
	2a,2a,2a,2a,2a,2a\\\nonumber
	2b,2b,2b,2b,2b,2b\\\nonumber
	3a,4a,3a,4a,3a,4a\\\nonumber
	3b,4b,3b,4b,3b,4b\\\nonumber
	5,5,5,5,5,5\\\nonumber
	\end{align}
	%More formally, $X$ is obtained by taking an infinite family of copies $Y$ indexed by the vertices of a regular
	%$18$-valent tree $T$ and, working 
	Working out radially from a basepoint $\ast\in T$, we attach the copies of $Y$ in the next sphere
	by dashed edges
	so as to continue the $6$ specified patterns in the forwards and backwards directions. No pair of copies of $Y$
	is joined by more than one edge.
	At the first stage, the copy $Y_\ast$ of $Y$ at $\ast$ has its vertex labelled $1$ attached to an adjacent
	copy of $Y$ by a dashed edge ending at a vertex with label $1$, and to another adjacent copy with an
	edge ending at a vertex labelled $6$; the vertex of $Y_\ast$ labelled $2a$ is connected to vertices with the same
	label in two other neighbouring copies of $Y$; likewise the vertices with label $2b$ and $5$; while the vertex of $Y_\ast$ with
	label $3a$ is connected to two neighbours at their vertices labelled $4a$, and the vertex of $Y_\ast$ with
	label $3b$ is connected to the two remaining neighbours at their vertices labelled $4b$. At subsequent stages,
	as one works away from the basepoint $\ast\in T$, at $v\in V(T)$ one first deals with the vertex of $Y_v$ that is already connected to a copy of $Y$ one-step closer to $\ast$ than $v$, connecting that vertex to a copy of $Y$ in the next generation by a dashed
	edge ending at a vertex that continues
	the pattern of the word labelling the dashed path from $\ast$ to $v$; the remaining vertices of $Y_v$ are then connected to the
	remaining $16$ neighbouring copies of $Y$ by the same prescription used at the basepoint. 
	
	It follows from this constructive description of $X$ that any copy of $Y$ in $X$ can be sent to any other copy by a
	suitable automorphism of $X$: send the first copy of $Y$ to the second by the unique label-preserving map, then
	proceed radially outwards through the template tree $T$ to extend the map to dashed edges and
	neighbouring copies of $Y$ in the unique label-preserving manner. In particular the quasitree $X$ is cocompact.

	Our next task is to describe covering maps from $X$ to $X_1$ and $X_2$. To this end, observe that each of our six repeating
	patterns of labels (\ref{pattern}) stays within the set of labels of one of the vertices of $Y_1$. This enables us to define a covering $X\to X_1$ by requiring that it restrict to $p_1:Y\to Y_1$ on each copy of $Y$ in $X$. (There is an arbitrary choice to be made
	in orientation of how each dashed edge then maps to $X_1$, but this is of no consequence.) In the case of $Y_2$,
	 the middle vertex has all the labels for the second, third and sixth patterns in (\ref{pattern}), but the other three patterns have labels that switch between the top and bottom vertices of $Y_2$, so we have to twist $p_2$ appropriately to account for this.
We fix an automorphism $\tau:Y_2\to Y_2$ of order $2$ that interchanges the top and bottom vertices, then define a covering $X\to X_2$ 
that restricts to either $p_2:Y\to Y_2$ or $\tau p_2:Y\to Y_2$ on each copy of $Y$ in $X$; to determine whether $p_2$ or
$\tau p_2$ is applied to a particular copy of $Y$, we again work out radially from the basepoint $\ast$ in the template tree $T$;
first we map $Y_\ast$ by $p_2$; subsequently,
each copy $Y_\lambda$ of $Y$ is attached to a unique copy $Y_{\lambda'}$ of $Y$ closer to $\ast$, and the restriction
$\pi_\lambda: Y_\lambda\to Y_2$ of $X\to X_2$ is defined to be $\pi_{\lambda'}$ or $\tau\pi_{\lambda'}$ according to whether 
the vertices of $Y_\lambda$ and $Y_{\lambda'}$ that are connected by a dashed edge have labels at the same vertex of $Y_2$ or not.
Having defined $X\to X_2$ on the disjoint union of the copies of $Y$, we again complete the description of the covering map
by arbitrarily choosing an orientation for the image of each dashed edge.

\iffalse
covering 
$X\to X_2$ restricted to $Y_\lambda$ is defined by taking $p_2$ or $\tau p_2$ 
$Y_\lambda\to Y_2$ is defined to be
$p_2$ or $\tau p_2$ according to whether 
 $X\to X_2$ on larger and larger portions of $X$. Start by picking one copy of $Y$ and give it the cover $p_2$. If we've already defined the cover $p\in\{p_2,\tau p_2\}$ for a particular copy of $Y$, then for a dashed edge leaving this copy of $Y$ from the vertex $y\in V(Y)$ and joining the next copy of $Y$ at the vertex $y'\in V(Y)$, choose a covering map $p'\in\{p_2,\tau p_2\}$ for this next copy of $Y$ such that $p(y)=p'(y')$. Such a choice can always be made (although it is not always unique) because the possible pairs $(y,y')$ come from adjacent labels in the sequences (\ref{pattern}).
\fi

To complete the proof of the theorem,
we must argue that there is no finite graph $\hat{X}$ that fits into a diagram of covers (\ref{nohatX}).
If $\hat{X}$ did exist, it would consist of solid and dashed edges in the same way as $X$, and each 
connected component $\hat{Y}$ of the solid subgraph in $\hat{X}$
would fit into the following diagram of covers: 
\begin{equation}\label{nohatY}
\begin{tikzcd}[
ar symbol/.style = {draw=none,"#1" description,sloped},
isomorphic/.style = {ar symbol={\cong}},
equals/.style = {ar symbol={=}},
subset/.style = {ar symbol={\subset}}
]
&Y\ar{d}\\
&\hat{Y}\ar{dl}\ar{dr}\\
Y_1&&Y_2	
\end{tikzcd}
\end{equation}
But the covers from $Y$ to $Y_1$ and $Y_2$ are degree three, and $Y_1$ is not isomorphic to $Y_2$, so this forces $Y\to\hat{Y}$ to be an isomorphism. Thus $\hat{X}$ could be constructed from copies of $Y$ connected by dashed edges. One can easily verify that the numbers in the labels of vertices of $Y$ correspond to orbits of ${\rm{Aut}}(Y)$ (for example the vertex $1$ is fixed by any automorphism
because it is the base of the only loop of length one). 
It follows that the vertices of $\hat{X}$ inherit a labelling by numbers from $Y$ (ignoring the letter suffix of labels)
such that the putative covering $X\to \hat{X}$ preserves the numbering of vertices. 
Therefore, 
the bi-infinite lines of dashed edges in $X$, which have
itineraries described by the patterns (\ref{pattern}) must cover circles of dashed edges in $\hat{X}$ 
whose itineraries (cyclic orderings of numerical vertex labels) follow the same pattern. This leads to a contradiction
as follows:
each vertex of $\hat{X}$ labelled $1$ or $6$ lies on a unique dashed circle (since there is a unique dashed line
through each of its pre-images in $X$), and the itinerary of this dashed circle is a repetition of the cyclic pattern
$1,1,6,1,1,6$; this forces $\hat{X}$ to contain twice as many vertices labelled $1$ versus vertices labelled $6$; this
contradicts the fact that each copy of $Y=\hat{Y}$ in $\hat{X}$ has one vertex labelled $1$ and one labelled $6$. 
\end{proof}

\begin{remk}\label{rem:notuni}
	The above proof in fact shows that $\rm{Aut}(X)$ does not contain a cocompact lattice;
	equivalently, there is no {\em regular} covering $X\to Z$ with $Z$ a finite graph. 
	In more detail, what the argument shows is that $X$ does not cover any finite graph 
	constructed from (solid) copies of $Y$ by adding dashed circles so that one such circle passes through each vertex.
	Any uniform lattice in $\rm{Aut}(X)$ would be virtually free,
	and the quotient of $X$ by a free subgroup $\Lambda$ of finite index would be a finite graph assembled from copies of $Y$
	in this manner. Indeed the action of $\rm{Aut}(X)$ permutes the obvious copies of $Y$, and the non-trivial elements
	of $\Lambda$ move each copy of $Y$ off itself. 
\end{remk}

\bigskip
\section{Leighton's Theorem can fail for  quasitrees even if ${\rm{Aut}}(X)$ contains a uniform lattice}\label{s:unimodular}

Theorem \ref{t:qL} concerns regular coverings by a quasitree, while in the contrasting
result Theorem \ref{thm:cocompactexample} the coverings are not regular
and the automorphism group of the quasitree does not contain a uniform lattice (see Remark \ref{rem:notuni}). This leaves open the question of 
whether one can exploit the irregularity of covers to prove a version of  
Theorem \ref{thm:cocompactexample} in which the automorphism group of the quasitree does contain a
uniform lattice. Our purpose in this
section is to settle this question by proving Theorem \ref{t:unimodularexample} from the introduction.
We repeat the statement for the reader's convenience.

\begin{thm}\label{t:unimodularexample2}
There exists a locally finite quasitree $X$, finite graphs $X_1$ and $X_2$, and
%such that $\rm{Aut}(X)$ contains a uniform lattice and there are 
covering maps
$X\to X_1$ and $X\to X_2$,  with $X\to X_2$ regular, for which there is no finite graph $\hat{X}$ fitting into the following diagram of covers.
	 \begin{equation}\label{nohatX2}
	\begin{tikzcd}[
	ar symbol/.style = {draw=none,"#1" description,sloped},
	isomorphic/.style = {ar symbol={\cong}},
	equals/.style = {ar symbol={=}},
	subset/.style = {ar symbol={\subset}}
	]
	&X\ar{d}\\
	&\hat{X}\ar{dl}\ar{dr}\\
	X_1&&X_2	
	\end{tikzcd}
	\end{equation}	
\end{thm}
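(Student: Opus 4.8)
\emph{Strategy.} The plan is to re-run the construction of Theorem \ref{thm:cocompactexample}, upgrading the finite data so that the covering $X\to X_2$ becomes \emph{regular}; once it is regular its deck group acts freely and cocompactly on $X$, hence is a uniform lattice in $\rm{Aut}(X)$ (and in particular $\rm{Aut}(X)$ is unimodular). Recall the shape of that construction: one fixes finite graphs $Y,Y_1,Y_2$ together with coverings $p_i\colon Y\to Y_i$ of equal prime degree and with $Y_1\not\cong Y_2$; one forms $X_i$ from $Y_i$ by attaching a subdivided loop (a ``dashed edge'') at every vertex; one indexes copies of $Y$ by the vertices of a regular template tree $T$ and joins them by dashed edges following a fixed finite list of repeating label-patterns; and each covering $X\to X_i$ is built by applying $p_i$, or $p_i$ post-composed with a graph symmetry, to each copy of $Y$. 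The only obstruction to regularity of $X\to X_2$ in Section \ref{s:cocompact} was that one pattern had non-constant image in $Y_2$, which forced a copy-dependent twist by an order-two symmetry $\tau$ of $Y_2$; along certain dashed lines this produced a genuinely non-constant ``$\tau$-state'', and since every automorphism of $X$ preserves the $\rm{Aut}(Y)$-orbit type of a vertex, no automorphism can merge two points of a common $X\to X_2$ fibre that lie at inequivalent vertices of $Y$.

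Accordingly, first I would choose $Y,Y_1,Y_2$, the maps $p_1,p_2$, and a finite list of repeating patterns so that three properties hold simultaneously. First, $p_1,p_2$ have equal prime degree $d\ge 2$ and $Y_1\not\cong Y_2$, so that any $\hat Y$ with coverings $Y\to\hat Y\to Y_i$ is forced to satisfy $Y\cong\hat Y$, exactly as in the proof of Theorem \ref{thm:cocompactexample}. Second, at least one pattern is ``unbalanced'': there is a pair of $\rm{Aut}(Y)$-orbits of vertices of $Y$ whose ratio of occurrences per period of the pattern differs from their ratio of occurrences in $Y$ itself; this is the feature that will obstruct $\hat X$. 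Third, every pattern has \emph{constant} image under $p_2$, so $X\to X_2$ needs no twisting, and for every vertex $y_2$ of $Y_2$ the preimage $p_2^{-1}(y_2)\subset V(Y)$ together with its incident dashed structure is homogeneous enough that the eventual deck group of $X\to X_2$ acts transitively on the corresponding fibre of $X$. Achieving the third property without destroying the second is where one must be cleverer than in Section \ref{s:cocompact}: roughly, the ``$X_2$-side'' of the data has to be made as symmetric as possible -- e.g.\ by enlarging the symmetry of $Y_2$ so that the letters of the offending pattern all lie over a single vertex of $Y_2$ and are permuted by symmetries that extend over all of $X$ -- while the ``$X_1$-side'' is kept rigid enough that the first property and the orbit-counting survive in any putative $\hat X$.

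With such data fixed, I would build $X$ over the template tree $T$ exactly as in Section \ref{s:cocompact}; then $X$ is a locally finite quasitree, and, since the gluing recipe is identical at every vertex of $T$, $\rm{Aut}(X)$ acts transitively on copies of $Y$. Using the third property, I would define $X\to X_2$ by applying the \emph{same} map $p_2$ on every copy of $Y$, so that each dashed line of $X$ covers a single dashed loop of $X_2$, and I would then verify that $\rm{Aut}(X)$ -- equivalently, an appropriate free subgroup of it -- is transitive on each fibre of $X\to X_2$, so that $X\to X_2$ is regular. The covering $X\to X_1$ is defined, as before, by $p_1$ on each copy of $Y$, and need not be regular. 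Finally, the non-existence of $\hat X$ is proved verbatim as in Theorem \ref{thm:cocompactexample}: a hypothetical $\hat X$ would have solid components $\hat Y\cong Y$ joined by dashed circles, one through each vertex; the covering $X\to\hat X$ would preserve $\rm{Aut}(Y)$-orbit types, so each dashed line of $X$ carrying an unbalanced pattern would cover a dashed circle of $\hat X$ with the same cyclic itinerary; comparing the count of the two relevant orbit types obtained from these circles with the count obtained from the copies of $\hat Y$ yields a contradiction.

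\emph{Main obstacle.} The heart of the section, and the step I expect to be hardest, is the simultaneous realisation of the second and third properties above: making $X\to X_2$ regular pushes one toward symmetric data, but too much symmetry both revives a uniform lattice-type mechanism on the $X_1$-side and can make $\hat X$ exist, whereas the orbit-counting obstruction requires asymmetry. Threading this needle demands a careful, somewhat intricate choice of $Y_1$, $Y_2$ and the pattern list, together with a genuine verification that the deck group of $X\to X_2$ -- not merely $\rm{Aut}(X)$ acting on copies of $Y$ -- is transitive on fibres; that verification, and the bookkeeping ensuring the counting argument still closes, constitute the technical refinements promised in the introduction.
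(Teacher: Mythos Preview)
Your proposal has a structural gap that makes the Section~\ref{s:cocompact}-style counting argument impossible within the framework you describe. You keep the Section~\ref{s:cocompact} set-up: $X_i$ is $Y_i$ with a dashed loop at each vertex, $X$ is copies of $Y$ over a template tree with dashed lines following prescribed label-patterns, and $X\to X_2$ applies $p_2$ to every copy. But once $X\to X_2$ is regular, its deck group $\Gamma$ is transitive on fibres; in particular, any two vertices of a single copy $Y_u\subset X$ lying in a common $p_2$-fibre are related by some $\gamma\in\Gamma$. Such $\gamma$ must carry $Y_u$ to itself and, under the identification of $Y_u$ with $Y$, gives an automorphism of $Y$ commuting with $p_2$, i.e.\ a deck transformation of $p_2$. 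Thus $p_2\colon Y\to Y_2$ is forced to be a \emph{regular} cover, with cyclic deck group $C_d\leq\Aut(Y)$. Every $p_2$-fibre is then a single $C_d$-orbit, hence sits inside a single $\Aut(Y)$-orbit. Your ``constant image under $p_2$'' condition confines each dashed-line pattern to one $p_2$-fibre, so every pattern lives in a single $\Aut(Y)$-orbit --- there can be no ``unbalanced'' pattern comparing two $\Aut(Y)$-orbits, and the Section~\ref{s:cocompact} counting collapses.

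The paper escapes this by abandoning the Section~\ref{s:cocompact} framework rather than tweaking it. It takes $Y_2=Y$, so regularity of $X\to X_2$ is free ($X$ is simply declared to be the regular cover corresponding to the normal closure of $\pi_1 Y$ in $\pi_1 X_2$), but the dashed edges of $X_2$ now join \emph{distinct} vertices of $Y$ rather than looping at each vertex. The price is that $X\to X_1$ can no longer be given by a single formula on each copy of $Y$; it must be built inductively via a notion of ``extendible'' edges and bijections, with an exceptional set $\mathcal{E}$ excised to make the induction close. The contradiction is then not read off dashed-line itineraries at all: instead one classifies each copy of $Y$ in a putative $\hat X$ as a ``left'' or ``right'' piece (according to which of the two triangles in $Y$ any covering $Y\to Y_1$ must wrap around the self-loop at $a$), and counts two separate families of dashed edges to obtain two incompatible ratios between the numbers of left and right pieces. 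So both where the hard work lies (constructing $X\to X_1$, not $X\to X_2$) and the mechanism of the obstruction (piece-types rather than vertex-orbit ratios along patterns) differ from what you outline.
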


\subsection{The graphs $X_1, X_2$ and $X$}
As in the proof of Theorem \ref{thm:cocompactexample}, we will build $X$ by assembling infinitely many copies of a finite graph $Y$ 
in a tree-like pattern. We shall again imagine these copies of $Y$ being made out of solid edges, with the edges joining them being dashed, and all graph morphisms we consider must map solid edges to solid edges and dashed to dashed.
Formally speaking we subdivide the dashed edges to distinguish them from the solid edges, but the arguments (and diagrams) run more smoothly if we think of them as being dashed.
Once again, $X_1$ and $X_2$ will be obtained from finite graphs $Y_1$ and $Y_2$ that are covered by $Y$. 
This time we take $Y_2=Y$, so there is only one non-trivial covering map $p_1:Y\to Y_1$ to be considered; this
is portrayed in the following figure, where we again label vertices to encode the map. 
\\

\begin{figure}[H]
	\centering
\scalebox{0.6}{
\begin{tikzpicture}[auto,node distance=2cm,
thick,every node/.style={circle,draw,font=\small,minimum size=0.8cm},
every loop/.style={min distance=2cm},
hull/.style={draw=none},
]
\tikzstyle{label}=[draw=none,font=\Huge]

%Y
\begin{scope}[scale=2]
\node (1) {a};
\node (2) at (-1,1) {a};
\node (3) at (1,1) {b};
\node (4) at (-1,-1) {a};
\node (5) at (1,-1) {c};
\node (6) at (-2,2) {b};
\node (7) at (2,2) {c};
\node (8) at (-2,-2) {c};
\node (9) at (2,-2) {b};
\node[hull] (Y) at (0,-2.2) {};
\node[label] (l) at (-3,0) {$Y$};

\node[label] (p) at (-0.4,-2.7) {$p_1$};

\path
(1) edge (2)
(1) edge (3)
(1) edge (4)
(1) edge (5)
(2) edge (4)
(3) edge (5)
(2) edge (6)
(3) edge (7)
(4) edge (8)
(5) edge (9)
(6) edge [bend left=10] (8)
(6) edge [bend right=10] (8)
(7) edge [bend left=10] (9)
(7) edge [bend right=10] (9)
(6) edge [bend left=30] (5)
(8) edge [bend right=30] (3)
(7) edge [bend right=30] (2)
(9) edge [bend left=30] (4);
\end{scope}

%Y1
\begin{scope}[shift={(0,-10)},scale=1.5]

\node (1) {a};
\node (2) at (2,2) {b};
\node (3) at (2,-2) {c};	
\node[hull] (Y1) at (0,1.5) {};
\node[label] (l) at (-4,0) {$Y_1$};

\path
(1) edge [loop left] (1)
(1) edge (2)
(1) edge (3)
(2) edge (3)
(2) edge [bend left=20] (3)
(2) edge [bend right=20] (3);

\end{scope}

\draw[draw=black,fill=blue,-triangle 90, ultra thick] (Y) -- (Y1);

\end{tikzpicture}
}
\caption{\small The covering $p_1:Y\to Y_1$.}\label{fig:p1}
\end{figure}

Let $\mathcal{P}$ be the collection of all covering maps $Y\to Y_1$ (not just those
two that respect the labelling of Figure \ref{fig:p1}). For each vertex $v\in V(Y)$ we want to determine the set $\{p(v)\mid p\in\mathcal{P}\}$. 
For any $p\in\mathcal{P}$, the loop of length $1$ in $Y_1$ must lift to a collection of cycles
in $Y$ of total length $3$. Since there are no 1-cycles or 2-cycles in $Y$, it must lift to a 3-cycle, and there are only two of those, namely the two triangles containing the central vertex, which we shall henceforth refer to as the \emph{left triangle} and \emph{right triangle}. Put another way:
\begin{equation}\label{tri-a}
{\text{Every covering map $Y\to Y_1$ maps all of the vertices of either the left or right trinagle onto $a$}.}
\end{equation}
 Therefore the only vertices in $Y$ that can map to the vertex $a$ in $Y_1$ are the five central vertices, and the central vertex must map to vertex $a$. It turns out that these are the only restrictions; in the following diagram we label the vertices $v\in Y$ with the sets $\{p(v)\mid p\in\mathcal{P}\}$.
\begin{figure}[H]
	\centering
\scalebox{0.6}{
\begin{tikzpicture}[auto,node distance=2cm,
thick,every node/.style={circle,draw,font=\small,minimum size=0.8cm},
every loop/.style={min distance=2cm},
hull/.style={draw=none},
]
\tikzstyle{label}=[draw=none,font=\Huge]

%Y
\begin{scope}[scale=2]
\node (1) {a};
\node (2) at (-1,1) {abc};
\node (3) at (1,1) {abc};
\node (4) at (-1,-1) {abc};
\node (5) at (1,-1) {abc};
\node (6) at (-2,2) {bc};
\node (7) at (2,2) {bc};
\node (8) at (-2,-2) {bc};
\node (9) at (2,-2) {bc};
\node[hull] (Y) at (0,-2.2) {};
\node[label] (l) at (-3,0) {$Y$};

\path
(1) edge (2)
(1) edge (3)
(1) edge (4)
(1) edge (5)
(2) edge (4)
(3) edge (5)
(2) edge (6)
(3) edge (7)
(4) edge (8)
(5) edge (9)
(6) edge [bend left=10] (8)
(6) edge [bend right=10] (8)
(7) edge [bend left=10] (9)
(7) edge [bend right=10] (9)
(6) edge [bend left=30] (5)
(8) edge [bend right=30] (3)
(7) edge [bend right=30] (2)
(9) edge [bend left=30] (4);
\end{scope}

%Y1
\begin{scope}[shift={(0,-10)},scale=1.5]

\node (1) {a};
\node (2) at (2,2) {b};
\node (3) at (2,-2) {c};	
\node[hull] (Y1) at (0,1.5) {};
\node[label] (l) at (-4,0) {$Y_1$};

\path
(1) edge [loop left] (1)
(1) edge (2)
(1) edge (3)
(2) edge (3)
(2) edge [bend left=20] (3)
(2) edge [bend right=20] (3);

\end{scope}

\draw[draw=black,fill=blue,-triangle 90, ultra thick] (Y) -- (Y1);

\end{tikzpicture}
}
\caption{\small Vertices $v\in Y$ labelled by the sets $\{p(v)\mid p\in\mathcal{P}\}$.}\label{fig:PlabelY}
\end{figure}
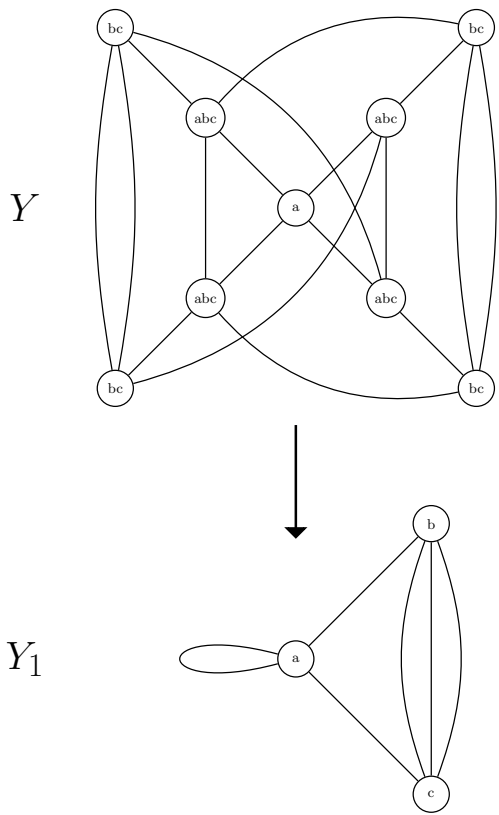

To see that each possibility for $p$ can be obtained, it is enough to compose $p_1$ with symmetries of $Y$:
the automorphism group of $Y$ acts transitively on the set of four outside vertices and on the set of vertices
labelled $abc$.
\iffalse
 can map to both $b$ and $c$ under some covering map, it is enough
to compose $p_1$ with the automorphism of $Y$ that (in the portrayal) is reflection in the horizontal axis.
Why are there enough maps in $\mathcal{P}$ to justify these sets $\{p(v)\mid p\in\mathcal{P}\}$? Well we can compose the covering $p_1$ with an automorphism $\tau:Y\to Y$ which fixes the middle vertex, reflects the left hand vertices onto the right hand vertices and rotates the right hand vertices onto the left hand vertices. This gives a covering $p\in\mathcal{P}$ which maps the right triangle onto vertex $a$. We can also compose any covering with the reflection of $Y_1$ in the horizontal axis, this explains why all but the central vertex in $Y$ can map to either $b$ or $c$.
\fi

The graphs $X_1$ and $X_2$ are obtained from $Y_1$ and $Y$ by attaching dashed edges as follows (the dashed edges are blue in the figures for extra clarity). Throughout 
this section, we will implicitly make the identifications $V(X_1)=V(Y_1)$ and $V(X_2)=V(Y)$.

\begin{figure}[H]
	\centering
\scalebox{0.6}{
\begin{tikzpicture}[auto,node distance=2cm,
thick,every node/.style={circle,draw,font=\small,minimum size=0.8cm},
every loop/.style={min distance=2cm},
hull/.style={draw=none},
]
\tikzstyle{label}=[draw=none,font=\Huge]

\begin{scope}[shift={(-7,0)},scale=1.5]
\node (1) {a};
\node (2) at (2,2) {b};
\node (3) at (2,-2) {c};
\node[label] (l) at (1,3.6) {$X_1$};	
\node[hull] (Y1) at (0,2.5) {};

\path
(1) edge [loop left] (1)
(1) edge (2)
(1) edge (3)
(2) edge (3)
(2) edge [bend left=20] (3)
(2) edge [bend right=20] (3)

(1) edge [blue,dashed, bend left=30] (2)
(1) edge [blue,dashed, bend left=40] (2)
(2) edge [blue,dashed, bend left=40] (3)
(2) edge [blue,dashed, bend left=50] (3)
(3) edge [blue,dashed, bend left=30] (1)
(3) edge [blue,dashed, bend left=40] (1);
\end{scope}

\begin{scope}[shift={(7,0)},scale=2]
\node (1) {a};
\node (2) at (-1,1) {abc};
\node (3) at (1,1) {abc};
\node (4) at (-1,-1) {abc};
\node (5) at (1,-1) {abc};
\node (6) at (-2,2) {bc};
\node (7) at (2,2) {bc};
\node (8) at (-2,-2) {bc};
\node (9) at (2,-2) {bc};

\node[label] (l) at (0,2.7) {$X_2$};

\path
(1) edge (2)
(1) edge (3)
(1) edge (4)
(1) edge (5)
(2) edge (4)
(3) edge (5)
(2) edge (6)
(3) edge (7)
(4) edge (8)
(5) edge (9)
(6) edge [bend left=10] (8)
(6) edge [bend right=10] (8)
(7) edge [bend left=10] (9)
(7) edge [bend right=10] (9)
(6) edge [bend left=30] (5)
(8) edge [bend right=30] (3)
(7) edge [bend right=30] (2)
(9) edge [bend left=30] (4)

(6) edge [blue,dashed,bend right=30] (8)
(6) edge [blue,dashed,bend right=40] (8)
(6) edge [blue,dashed,bend left=20] (1)
(6) edge [blue,dashed,bend left=30] (1)
(1) edge [blue,dashed,bend left=20] (8)
(1) edge [blue,dashed,bend left=30] (8)

(4) edge [blue,dashed,bend left=15] (2)
(4) edge [blue,dashed,bend left=25] (2)
(4) edge [blue,dashed,bend left=35] (2)
(2) edge [blue,dashed,bend left=20] (7)
(9) edge [blue,dashed,bend left=20] (4)
(7) edge [blue,dashed,bend left=20] (3)
(7) edge [blue,dashed,bend right=20] (3)
(5) edge [blue,dashed,bend left=20] (9)
(5) edge [blue,dashed,bend right=20] (9)
(5) edge [blue,dashed,bend right=25] (3)
(5) edge [blue,dashed,bend right=35] (3) 
(9) edge [blue,dashed,bend right=40] (7);
\end{scope}

\end{tikzpicture}
}
\caption{\small The graphs $X_1$ and $X_2$.}\label{fig:X12}
\end{figure}
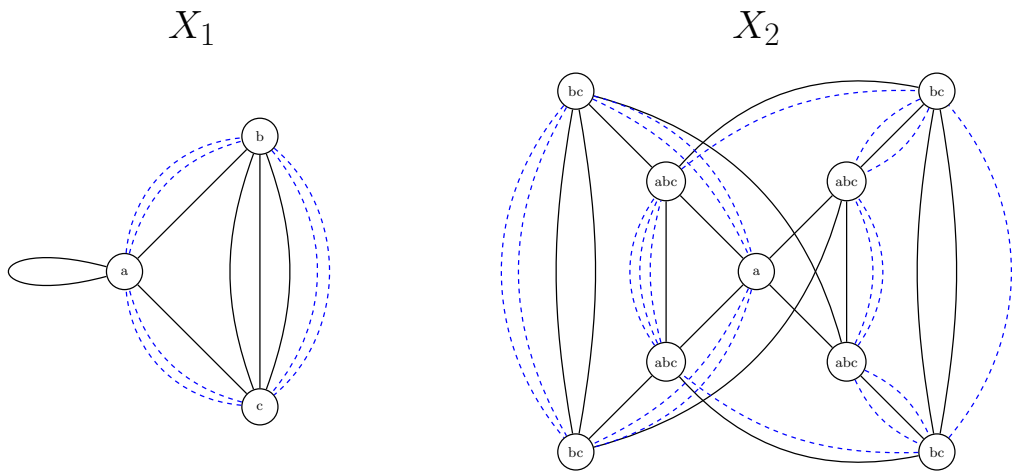

We define the graph $X$ by taking the regular covering $q_2:X\to X_2$ corresponding to the normal subgroup
$\<\!\<\pi_1Y_2\>\!\>$ of $\pi_1 X_2$; the Galois group $\G$ of this covering is free of rank $18$. Note that $\G<{\rm{Aut}}(X)$ is a uniform lattice. 
$X$ is (up to isomorphism) the unique regular cover of $X_2$ that has no cycle containing a dashed edge and is such that each
connected component of the solid subgraph is isomorphic to $Y$. 
In particular $X$ can be regarded as infinitely many copies of $Y$ assembled according to a tree-like template, 
as in the proof of Theorem \ref{thm:cocompactexample}, and we will again use this template to build maps by working
outwards from a basepoint in a radial manner.

\subsection{Extendible edges and maps}
We want to prove that there is a 
covering map $q_1:X\to X_1$. This covering map will restrict to a covering $p:Y\to Y_1$ on each copy of $Y$ in $X$.
We have already considered limitations on where $p$ can send vertices, and we have to work with these constraints
as we determine where the dashed edges of $X$ map: each dashed edge in $X$ must map to a dashed edge in $X_1$ in such a way that we can extend continuously to coverings $Y\to Y_1$ on adjacent copies of $Y$. With this in mind we make the following definition, where we work with oriented edges, writing $\varepsilon(0)$ and $\varepsilon(1)$ to denote the initial and terminal vertices of an edge $\varepsilon\in E^{\pm}(X_i)$.
This definition is made more awkward by the need to exclude an {\em exceptional set} $\mathcal{E}$ of pairs
of edges; by definition $\mathcal{E}$ is any pair of edges $(\varepsilon_1,\varepsilon_2)\in E^{\pm}(X_1)\times E^{\pm}(X_2)$ that are shown as red and thick-dashed in Figure \ref{fig:exceptional} (with any orientations).

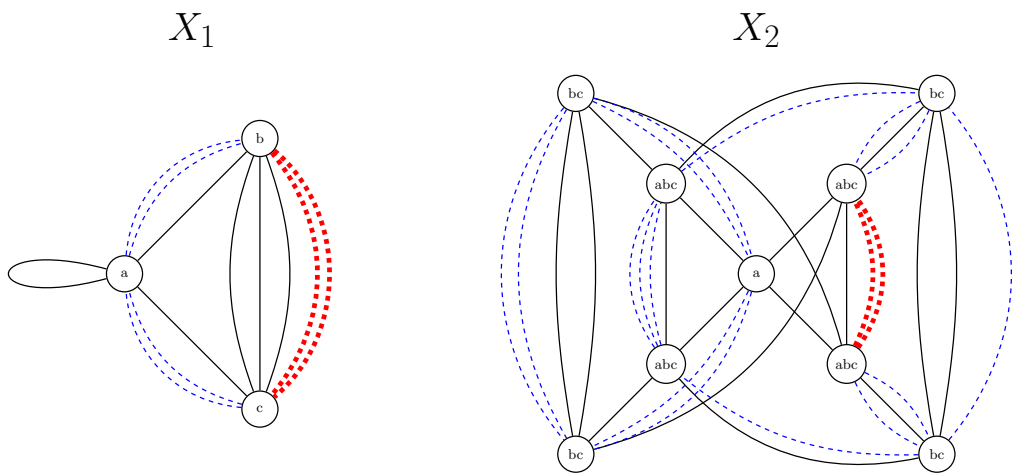
\begin{figure}[H]
	\centering
	\scalebox{0.6}{
		\begin{tikzpicture}[auto,node distance=2cm,
		thick,every node/.style={circle,draw,font=\small,minimum size=0.8cm},
		every loop/.style={min distance=2cm},
		hull/.style={draw=none},
		]
		\tikzstyle{label}=[draw=none,font=\Huge]
		
		\begin{scope}[shift={(-7,0)},scale=1.5]
		\node (1) {a};
		\node (2) at (2,2) {b};
		\node (3) at (2,-2) {c};	
		\node[hull] (Y1) at (0,2.5) {};
		\node[label] (l) at (1,3.6) {$X_1$};
		
		\path
		(1) edge [loop left] (1)
		(1) edge (2)
		(1) edge (3)
		(2) edge (3)
		(2) edge [bend left=20] (3)
		(2) edge [bend right=20] (3)
		
		(1) edge [blue,dashed, bend left=30] (2)
		(1) edge [blue,dashed, bend left=40] (2)
		(2) edge [red,dashed,line width=3pt, bend left=40] (3)
		(2) edge [red,dashed,line width=3pt, bend left=50] (3)
		(3) edge [blue,dashed, bend left=30] (1)
		(3) edge [blue,dashed, bend left=40] (1);
		\end{scope}
	
	\begin{scope}[shift={(7,0)},scale=2]
		\node (1) {a};
		\node (2) at (-1,1) {abc};
		\node (3) at (1,1) {abc};
		\node (4) at (-1,-1) {abc};
		\node (5) at (1,-1) {abc};
		\node (6) at (-2,2) {bc};
		\node (7) at (2,2) {bc};
		\node (8) at (-2,-2) {bc};
		\node (9) at (2,-2) {bc};
		\node[label] (X2) at (0,2.7) {$X_2$};
		
		\path
		(1) edge (2)
		(1) edge (3)
		(1) edge (4)
		(1) edge (5)
		(2) edge (4)
		(3) edge (5)
		(2) edge (6)
		(3) edge (7)
		(4) edge (8)
		(5) edge (9)
		(6) edge [bend left=10] (8)
		(6) edge [bend right=10] (8)
		(7) edge [bend left=10] (9)
		(7) edge [bend right=10] (9)
		(6) edge [bend left=30] (5)
		(8) edge [bend right=30] (3)
		(7) edge [bend right=30] (2)
		(9) edge [bend left=30] (4)
		
		(6) edge [blue,dashed,bend right=30] (8)
		(6) edge [blue,dashed,bend right=40] (8)
		(6) edge [blue,dashed,bend left=20] (1)
		(6) edge [blue,dashed,bend left=30] (1)
		(1) edge [blue,dashed,bend left=20] (8)
		(1) edge [blue,dashed,bend left=30] (8)
		
		(4) edge [blue,dashed,bend left=15] (2)
		(4) edge [blue,dashed,bend left=25] (2)
		(4) edge [blue,dashed,bend left=35] (2)
		(2) edge [blue,dashed,bend left=20] (7)
		(9) edge [blue,dashed,bend left=20] (4)
		(7) edge [blue,dashed,bend left=20] (3)
		(7) edge [blue,dashed,bend right=20] (3)
		(5) edge [blue,dashed,bend left=20] (9)
		(5) edge [blue,dashed,bend right=20] (9)
		(5) edge [red,dashed,line width=3pt,bend right=25] (3)
		(5) edge [red,dashed,line width=3pt,bend right=35] (3)
		
		(9) edge [blue,dashed,bend right=40] (7);
		\end{scope}
		
		\end{tikzpicture}
	}
	
	\caption{\small The red thick-dashed pairs in the exceptional set}\label{fig:exceptional}
\end{figure} 

We define a pair of oriented dashed edges $(\vare_1,\vare_2)\in E^{\pm}(X_1)\times E^{\pm}(X_2)$
 to be \emph{extendible} if $(\vare_1,\vare_2)\notin\mathcal{E}$ and
there exist $p,p'\in\mathcal{P}$ such that $p(\vare_2(0))=\vare_1(0)$ and $p'(\vare_2(1))=\vare_1(2)$. Extendible pairs can easily be read off from Figure \ref{fig:exceptional}, because
the condition just requires that the label of $\vare_1(0)$ is an element of the set of labels of $\vare_2(0)$ and 
that the label of $\vare_1(1)$ is an element of the set of labels at $\vare_2(1)$. Note that $(\e_1,\e_2)$ is extendible if and only if the pair with reversed orientations $(\bar{\e}_1,\bar{\e}_2)$ is extendible.

Given a vertex $v$ let $\lkblu (v)$ denote the set of oriented dashed edges with initial vertex $v$. If $v\in V(X_2)$ and $p\in\mathcal{P}$, we say that a bijection $\sigma:\lkblu (v)\to\lkblu (p(v))$ is \emph{extendible} if every pair $(\sigma(\e),\e)$ is extendible.\\

\begin{lemma}\label{claims12}
\begin{enumerate}[(1)]
\item For any $p\in\mathcal{P}$ and vertex $v\in V(X_2)$, there exists an extendible bijection $\sigma:\lkblu (v)\to\lkblu (p(v))$.
\item  For any $p\in\mathcal{P}$, vertex $v\in V(X_2)$ and extendible pair $(\e_1,\e_2)\in\lkblu (p(v))\times\lkblu (v)$, there exists an extendible bijection $\sigma:\lkblu (v)\to\lkblu (p(v))$ such that $\sigma(\e_2)=\e_1$.
\end{enumerate}
\end{lemma}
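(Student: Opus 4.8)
The plan is to prove Lemma~\ref{claims12} by a finite verification, set up so that whether a pair $(\e_1,\e_2)\in E^{\pm}(X_1)\times E^{\pm}(X_2)$ of oriented dashed edges is extendible depends only on a small amount of combinatorial data, which I will call the \emph{type} of an edge. For $\e_2\in\lkblu(v)$ with $v\in V(X_2)$, the relevant data is the pair of label-sets $\big(\{p(v)\mid p\in\mathcal{P}\},\,\{p(\e_2(1))\mid p\in\mathcal{P}\}\big)$ read off from Figure~\ref{fig:PlabelY}, together with the single bit recording whether $\e_2$ is one of the thick-dashed edges of $X_2$ in Figure~\ref{fig:exceptional}; for $\e_1\in\lkblu(w)$ with $w\in V(X_1)$ it is the pair $(w,\e_1(1))$ together with the bit recording whether $\e_1$ is one of the red edges of $X_1$. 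Since $w=p(v)$ automatically lies in $\{p(v)\mid p\in\mathcal{P}\}$, the pair $(\e_1,\e_2)$ is extendible precisely when $\e_1(1)\in\{p(\e_2(1))\mid p\in\mathcal{P}\}$ and $\e_1,\e_2$ are not both red. As every vertex of $X_2$ has label-set $\{a\}$, $\{a,b,c\}$ or $\{b,c\}$, there are only finitely many types, and --- using that each vertex of $X_1$ and of $X_2$ meets exactly four dashed half-edges --- both parts of the lemma become assertions about type-respecting perfect matchings in bipartite graphs on $4+4$ vertices.

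First I would cut down the number of vertices to examine. The automorphisms of $Y$ act transitively on its four outer vertices and on its four vertices labelled $abc$, and inspecting the dashed structure of $X_2$ shows that, as far as the multiset of edge-types in $\lkblu(v)$ is concerned, the nine vertices fall into five classes: the central vertex (four dashed edges to $bc$-vertices, none red); the vertices $3,5$ (two dashed edges to $bc$-vertices and two \emph{red} dashed edges to $abc$-vertices); the vertices $2,4$ (three dashed edges to $abc$-vertices and one to a $bc$-vertex, none red); the vertices $7,9$ (same multiset of edge-types as $2,4$, but the vertex itself has label-set $\{b,c\}$); and the vertices $6,8$ (two dashed edges to $bc$-vertices and two to the central vertex, none red). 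For a representative $v$ of each class and each admissible value $w=p(v)$ --- so $w=a$ for the central vertex, $w\in\{b,c\}$ whenever the label-set of $v$ is $\{b,c\}$, and $w$ arbitrary when it is $\{a,b,c\}$ --- I would write out the two multisets of types, one for $\lkblu(v)$ and one for $\lkblu(w)$, and check, by inspection or by verifying Hall's condition, that there is a type-respecting perfect matching containing no red--red pair; this gives part~(1). For part~(2) I would, for each such $(v,w)$ and each extendible pair $(\e_1,\e_2)\in\lkblu(w)\times\lkblu(v)$, delete $\e_1$ and $\e_2$ from the two links and verify that the residual $3+3$ bipartite graph still satisfies Hall's condition.

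I expect the only genuinely delicate point --- and hence the main, if modest, obstacle --- to be the interplay between the red edges and the fact that several of these matchings are forced. For example, when $v$ is a $bc$-vertex and $w\in\{b,c\}$, the two dashed edges of $\lkblu(v)$ running to an $a$-vertex can only be sent to the two $a$-targets of $\lkblu(w)$, which forces the remaining two dashed edges of $\lkblu(v)$ onto the two \emph{red} edges of $\lkblu(w)$; one then has to check that those $X_2$-side edges are themselves non-red, so that the resulting pairs avoid $\mathcal{E}$. Tracing through the cases, the red edges of $X_2$ --- the doubled dashed edge joining the $abc$-vertices $5$ and $3$ in Figure~\ref{fig:exceptional} --- can always be matched instead to the \emph{non}-red ($a$-target) edges of $\lkblu(w)$, so no forbidden red--red pair is ever forced in part~(1); and for part~(2) the point of the definition of $\mathcal{E}$ is exactly that an extendible pair $(\e_1,\e_2)$ is never one of these red--red pairs, so the forced completion of the matching remains legal. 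In short, the real content of the lemma is that the exceptional set has been chosen just large enough to block the red--red pairings that would otherwise obstruct part~(2) and just small enough to leave the needed matchings intact; establishing this is exactly the finite case-check sketched above, the only real risk being a misreading of Figures~\ref{fig:X12} and~\ref{fig:exceptional}.
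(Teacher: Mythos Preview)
Your proposal is correct and follows essentially the same approach as the paper: a finite case-check over the possible pairs $(v,p(v))$, verifying by inspection that the required bijections between $\lkblu(v)$ and $\lkblu(p(v))$ exist, with the role of the exceptional set $\mathcal{E}$ being precisely to rule out the red--red pairings that would otherwise obstruct part~(2). The paper works through one representative pair in detail and declares the remaining cases similar, whereas you organise the check more systematically by grouping vertices into five link-type classes and phrasing the verification via Hall's condition; this is a cosmetic difference rather than a different argument.
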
 

\begin{proof}
To prove these claims one must consider every pair of vertices $(p(v),v)$ with $v\in V(X_2)$ and $p\in\mathcal{P}$, and inspect the labellings on the dashed edges in the links of these vertices. We will do this explicitly for the particular
pair of vertices highlighted in Figure \ref{fig:sigmaanalysis} below;  the arguments for other pairs are similar, but some
remarks will be needed concerning the exceptional set $\mathcal{E}$.
We have coloured various edges in Figure \ref{fig:sigmaanalysis} so that we can refer to them
accurately without getting overrun by notation.
 
To begin, note that each yellow edge in $\lkblu (v)$
ends at a vertex labelled $abc$, so an extendible $\sigma$ can map these edges to either the green or orange edges. The red edge goes to a vertex labelled $bc$, so to be extendible $\sigma$ must map this red edge to a green one;
these constraints can obviously be satisfied, so claim (1), the existence of extendible maps $\sigma$,
is established in this case. 
%In fact the only condition needed for $\sigma$ to be extendible in this case is that it maps the red edge to a green edge. 
%For the second claim we need to show that defining $\sigma$ on one edge $e$ such that $(\sigma(e),e)$ is extendible can be extended to an extendible bijection on the whole of $\lkblu (v)$. 
Assertion (2) is also clear in this case: if we specify that the red edge maps to one of the green edges then any extension of $\sigma$ to the yellow edges will be extendible, whilst if we specify where one of the yellow edges goes then this leaves at least one green edge available for the red edge to map to; so in either case, we can extend the initial assignment to an extendible $\sigma$.\\

\begin{figure}[H]
	\centering
\scalebox{0.6}{
\begin{tikzpicture}[auto,node distance=2cm,
thick,every node/.style={circle,draw,font=\small,minimum size=0.8cm},
every loop/.style={min distance=2cm},
hull/.style={draw=none}
]
\tikzstyle{label}=[draw=none,font=\Huge]

\begin{scope}[shift={(-7,0)},scale=1.5]
\node (1) {a};
\node (2) at (2,2) {b};
\node (3) [ultra thick] at (2,-2) {c};	
\node[hull] (Y1) at (0,2.5) {};
\node[label] (l) at (1,3.6) {$X_1$};
\node[label] at (3.3,-2.3) {$p(v)$};

\path
(1) edge [loop left] (1)
(1) edge (2)
(1) edge (3)
(2) edge (3)
(2) edge [bend left=20] (3)
(2) edge [bend right=20] (3)

(1) edge [blue,dashed, bend left=30] (2)
(1) edge [blue,dashed, bend left=40] (2)
(3) edge [green,dashed,line width=3pt, bend right=40,
postaction={decoration={markings,mark=at position 0.55 with {\arrow[green,line width=1mm]{triangle 60}}},decorate}] (2)
(3) edge [green,dashed,line width=3pt, bend right=60,
postaction={decoration={markings,mark=at position 0.55 with {\arrow[green,line width=1mm]{triangle 60}}},decorate}] (2)
(3) edge [orange,dashed,line width=3pt, bend left=30,
postaction={decoration={markings,mark=at position 0.6 with {\arrow[orange,line width=1mm]{triangle 60}}},decorate}] (1)
(3) edge [orange,dashed,line width=3pt, bend left=50,
postaction={decoration={markings,mark=at position 0.6 with {\arrow[orange,line width=1mm]{triangle 60}}},decorate}] (1);
\end{scope}

\begin{scope}[shift={(7,0)},scale=2]
	
\node (1) {a};
\node (2) at (-1,1) {abc};
\node (3) at (1,1) {abc};
\node (4) at (-1,-1) {abc};
\node (5) at (1,-1) {abc};
\node (6) at (-2,2) {bc};
\node (7) at (2,2) {bc};
\node (8) at (-2,-2) {bc};
\node (9) [ultra thick] at (2,-2) {bc};
\node[label] (X2) at (0,2.7) {$X_2$};
\node[label] at (2.5,-2.1) {$v$};

\path
(1) edge (2)
(1) edge (3)
(1) edge (4)
(1) edge (5)
(2) edge (4)
(3) edge (5)
(2) edge (6)
(3) edge (7)
(4) edge (8)
(5) edge (9)
(6) edge [bend left=10] (8)
(6) edge [bend right=10] (8)
(7) edge [bend left=10] (9)
(7) edge [bend right=10] (9)
(6) edge [bend left=30] (5)
(8) edge [bend right=30] (3)
(7) edge [bend right=30] (2)
(9) edge [bend left=30] (4)

(6) edge [blue,dashed,bend right=30] (8)
(6) edge [blue,dashed,bend right=40] (8)
(6) edge [blue,dashed,bend left=20] (1)
(6) edge [blue,dashed,bend left=30] (1)
(1) edge [blue,dashed,bend left=20] (8)
(1) edge [blue,dashed,bend left=30] (8)

(4) edge [blue,dashed,bend left=15] (2)
(4) edge [blue,dashed,bend left=25] (2)
(4) edge [blue,dashed,bend left=35] (2)
(2) edge [blue,dashed,bend left=20] (7)
(9) edge [yellow,dashed,line width=3pt,
postaction={decoration={markings,mark=at position 0.5 with {\arrow[yellow,line width=1mm]{triangle 60}}},decorate},bend left=20] (4)
(7) edge [blue,dashed,bend left=20] (3)
(7) edge [blue,dashed,bend right=20] (3)
(9) edge [yellow,dashed,line width=3pt,
postaction={decoration={markings,mark=at position 0.65 with {\arrow[yellow,line width=1mm]{triangle 60}}},decorate},bend right=20] (5)
(9) edge [yellow,dashed,line width=3pt,
postaction={decoration={markings,mark=at position 0.65 with {\arrow[yellow,line width=1mm]{triangle 60}}},decorate},bend left=20] (5)
(5) edge [blue,dashed,bend right=25] (3)
(5) edge [blue,dashed,bend right=35] (3)

(9) edge [red,dashed,line width=3pt,
postaction={decoration={markings,mark=at position 0.5 with {\arrow[red,line width=1mm]{triangle 60}}},decorate},bend right=40] (7);
\end{scope}

\end{tikzpicture}
}
\caption{\small Analysis of maps $\sigma:\lkblu (v)\to\lkblu (p(v))$ for a particular pair $(p(v),v)$.}\label{fig:sigmaanalysis}
\end{figure}

For the other pairs $(p(v),v)$ the proof is equally straightforward, having excluded
the troublesome edges from Figure \ref{fig:exceptional}. If $v$ and $p(v)$ are vertices incident at the thick-dashed red edges, then any pair of these red edges would be extendible if we had not explicitly excluded the set $\mathcal{E}$, but (2) would fail because there is no
extendible bijection $\sigma:\lkblu (v)\to\lkblu (p(v))$ mapping a red edge to a red edge, since then one of the blue edges in $\lkblu(v)$ would have to map to a blue edge in $\lkblu(p(v))$.
We removed this problem by fiat.
\end{proof}

\subsection{The covering $X\to X_1$}
For $Z$ a subgraph of $X$, we say that an immersion (locally injective graph morphism) $q:Z\to X_1$ is \emph{extendible} if the pair $(q(\e),q_2(\e))$ is extendible for every dashed edge $\e$ in $Z$. We construct the cover $q_1:X\to X_1$ 
by working outwards from a base copy of $Y$ inductively using the following lemma. Note that defining $q_1$ on the base copy of $Y$ and the dashed edges that meet it also follows from the lemma by setting $Z=\emptyset$.\\

\begin{lemma}
Suppose $Z$ is a finite connected subgraph of $X$ consisting of a number of (solid) 
copies of $Y$ and all of the dashed edges that meet them, and suppose $q:Z\to X_1$ is extendible. If $Y'\subset X$ is a copy of $Y$ adjacent to $Z$, then we can extend $q$ to an extendible map $q':Z'\to X_1$, where $Z'$ is the union of $Z$ with $Y'$ and any dashed edges that meet it.
\end{lemma}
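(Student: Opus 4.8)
The plan is to define $q'$ by first choosing its restriction to the solid copy $Y'$ to be a covering map coming from $\mathcal{P}$, and then, at each vertex of $Y'$, fixing the images of the incident dashed edges via the extendible bijections furnished by Lemma~\ref{claims12}; the single dashed edge through which $Y'$ touches $Z$ forces one compatibility condition, which is met using part~(2) of that lemma. To begin, I would locate that edge. Since $X$ is a tree-like union of copies of $Y$ joined by dashed edges — equivalently no cycle of $X$ contains a dashed edge, which is clear from the construction of $X$ as the regular cover of $X_2$ with $\pi_1 Y_2$ killed — there is exactly one dashed edge $\varepsilon_0$ of $X$ joining $Y'$ to $Z$: at least one exists because $Y'$ is adjacent to $Z$, while a second one would yield a cycle through a dashed edge inside $Z\cup Y'$. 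Orient $\varepsilon_0$ so that $v_0:=\varepsilon_0(0)\in V(Y')$. Because $Z$ contains every dashed edge meeting its copies of $Y$, we have $\varepsilon_0\subseteq Z$, so $q(\varepsilon_0)$ and $q(v_0)$ are already defined; for the same reason $V(Z)\cap V(Y')=\{v_0\}$.

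Next, write $\bar w:=q_2(w)$ for $w\in V(Y')$, so that $q_2$ identifies $Y'$ with the solid subgraph $Y=Y_2\subseteq X_2$. Since $q$ is extendible, the pair $(q(\varepsilon_0),q_2(\varepsilon_0))$ is extendible, so there is some $p\in\mathcal{P}$ with $p(\bar v_0)=q(v_0)$. I would declare $q'$ on $Y'$ to be $w\mapsto p(\bar w)$ on vertices, with the obvious effect on solid edges; this is a covering $Y'\to Y_1\subseteq X_1$, hence an immersion on the solid part, and it agrees with $q$ at $v_0$. It remains to define $q'$ on the dashed edges meeting $Y'$, which are exactly the members of $\lkblu(w)$ for $w\in V(Y')$ and all lie in $Z'$. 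Every such edge other than $\varepsilon_0$ has its far endpoint in a copy of $Y$ distinct from $Y'$ and not contained in $Z$ (again by the tree-like structure), so that endpoint has valence at most $1$ within $Z'$ and imposes no further constraint. For each $w\in V(Y')$ I would choose an extendible bijection $\sigma_w\colon\lkblu(\bar w)\to\lkblu(p(\bar w))$ and set $q'(\e):=\sigma_w(q_2(\e))$ for $\e\in\lkblu(w)$.

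For $w\neq v_0$, Lemma~\ref{claims12}(1) provides such a $\sigma_w$. For $w=v_0$ the value $q'(\varepsilon_0)$ is already pinned down to $q(\varepsilon_0)$, so here I would invoke Lemma~\ref{claims12}(2) for the extendible pair $(q(\varepsilon_0),q_2(\varepsilon_0))\in\lkblu(p(\bar v_0))\times\lkblu(\bar v_0)$ — which lies in these links because $p(\bar v_0)=q(v_0)=q(\varepsilon_0)(0)$ — to obtain $\sigma_{v_0}$ with $\sigma_{v_0}(q_2(\varepsilon_0))=q(\varepsilon_0)$, so that the new recipe reproduces the old value on $\varepsilon_0$.

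Finally I would verify that $q'\colon Z'\to X_1$ is a well-defined immersion that extends $q$ and is extendible. Well-definedness and the fact that it extends $q$ are automatic: $Z$ meets $Y'$ together with the new dashed edges only along $\varepsilon_0$ and $v_0$, where $p$ and $\sigma_{v_0}$ were chosen to match $q$. At each vertex $w\in V(Y')$, $q'$ sends the solid link of $w$ bijectively onto the solid link of $q'(w)$ (as $p$ is a covering) and the dashed link bijectively onto the dashed link of $q'(w)$ (as $\sigma_w$ is a bijection), and in $X_1$ the solid and dashed links of a vertex are disjoint, so $q'$ is locally injective there; at all remaining vertices of $Z'$ it either coincides with the immersion $q$ or has valence at most $2$. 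Extendibility is clear: a new dashed edge $\e$ at $w$ satisfies $(q'(\e),q_2(\e))=(\sigma_w(q_2(\e)),q_2(\e))$, extendible by the choice of $\sigma_w$, while the dashed edges already in $Z$ are handled by the hypothesis on $q$. The one genuinely delicate point is the compatibility at $v_0$: reconciling the predetermined image $q(\varepsilon_0)$ with a full extendible bijection of dashed links is exactly what part~(2) of Lemma~\ref{claims12} — together with the observation that a pair of edges is extendible if and only if its orientation-reversal is — was set up to make possible, and it is for this reason that the exceptional set $\mathcal{E}$ had to be excised. (If $Z=\emptyset$ the argument runs verbatim, dropping $\varepsilon_0$ and applying Lemma~\ref{claims12}(1) at every vertex of $Y'$.)
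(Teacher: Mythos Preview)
Your proof is correct and follows essentially the same approach as the paper's: identify $Y'$ with $Y$ via $q_2$, use extendibility of the unique bridging edge $(\varepsilon_0)$ to select $p\in\mathcal{P}$ matching $q$ at $v_0$, and then assign dashed links via extendible bijections $\sigma_w$ from Lemma~\ref{claims12}, invoking part~(2) at $v_0$ to respect the predetermined value $q(\varepsilon_0)$ and part~(1) elsewhere. Your write-up is in fact somewhat more thorough than the paper's, in that you explicitly justify the uniqueness of $\varepsilon_0$ from the tree-like structure and verify local injectivity of $q'$ at vertices of $Y'$ by separating the solid and dashed links.
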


\begin{proof}
$Y'$ is identified with $Y$ via the covering $p_2:X\to X_2$. We are forced to put $q'=q$ on $Z$, so it remains to define $q$ on $Y'$ and the dashed edges that meet it. We do this by setting $q'=pq_2$ on $Y'$ for some $p\in\mathcal{P}$, and for each $v\in V(Y')$ we set $q'=\sigma_v q_2$ on $\lkblu(v)$ for some extendible bijection $\sigma_v:\lkblu(q_2(v))\to\lkblu(pq_2(v))$. The fact that the $\sigma_v$ are extendible ensures that $q'$ is extendible. However, $q'$ must agree with $q$ on the unique dashed edge $\tilde{\e}\in E^\pm(Z)$ with $\tilde{v}:=\tilde{\e}(0)\in V(Y')$. This gives us the following two constraints:
\begin{enumerate}[(1)]
	\item $pq_2(\tilde{v})=q(\tilde{v})$
	\item $\sigma_{\tilde{v}}q_2(\tilde{\e})=q(\tilde{\e})$
\end{enumerate}
The idea is then to meet these constraints by using the fact that $q$ is extendible. Indeed, extendibility of $q$ implies that the pair $(q(\tilde{\e}),q_2(\tilde{\e}))$ is extendible, so there exists $p\in\mathcal{P}$ satisfying (1). And Lemma \ref{claims12}(2) ensures that there exists an extendible bijection $\sigma_{\tilde{v}}:\lkblu(q_2(\tilde{v}))\to\lkblu(pq_2(\tilde{v}))$ satisfying (2). Finally, the existence of extendible bijections $\sigma_v$ for the other vertices $v$ follows from Lemma \ref{claims12}(1).
\end{proof}

\subsection{Final step in the proof of Theorem \ref{t:unimodularexample}}

To complete the proof of the theorem, we must prove that there does not exist a finite graph $\hat{X}$ fitting into the diagram (\ref{nohatX2}). Let $\hat{p}_i:\hat{X}\to X_i$ denote the coverings of $X_1$ and $X_2$. Just as in Theorem \ref{thm:cocompactexample},
 each connected
component $\hat{Y}$ of the solid subgraph of $\hat{X}$
 would fit into the following diagram of covers:
\begin{equation}\label{nohatY2}
\begin{tikzcd}[
ar symbol/.style = {draw=none,"#1" description,sloped},
isomorphic/.style = {ar symbol={\cong}},
equals/.style = {ar symbol={=}},
subset/.style = {ar symbol={\subset}}
]
&Y\ar{d}\\
&\hat{Y}\ar{dl}[swap]{\hat{p}_1}\ar{dr}{\hat{p}_2}\\
Y_1&&Y	
\end{tikzcd}
\end{equation}

From this it is immediate that $\hat{Y}\cong Y$, so $\hat{X}$ is a finite graph of copies of $Y$ joined together by dashed edges. We call the copies of $Y$ in $\hat{X}$ (i.e.~the connected components of the solid subgraph) \emph{pieces}. Each piece is equipped with a covering $p=\hat{p}_1\circ\hat{p}_2^{-1}:Y\to Y_1$ induced from the above diagram. Recall that $Y$ contains two triangles, called the left and right triangles (highlighted in bold below). We saw in (\ref{tri-a})
that $p$ will map either the left or the right triangle three times 
around the edge loop based at the vertex $a$ in $Y_1$; we call the piece a \emph{left piece} or a \emph{right piece} correspondingly.

For any oriented dashed edge $\e$ in $\hat{X}$, the existence of the maps $\hat{p}_1$ and $\hat{p}_2$ on the adjacent pieces tells us that the pair $(\hat{p}_1(\e),\hat{p}_2(\e))$ is extendible. The way we will arrive at a contradiction is to use lifts of the dashed edges highlighted green and red below to compute two different ratios between the number of left and right pieces in $\hat{X}$
(cf.~Example \ref{exmp:nouniformlattice}). Let $A_g$ [resp.~$A_r$] denote the set of lifts of the green [resp.~red] edges to $\hat{X}$ that map down to one of the orange edges in $X_1$.

\begin{figure}[H]
	\centering
\scalebox{0.6}{
\begin{tikzpicture}[auto,node distance=2cm,
thick,every node/.style={circle,draw,font=\small,minimum size=0.8cm},
every loop/.style={min distance=2cm},
hull/.style={draw=none}
]
\tikzstyle{label}=[draw=none,font=\Huge]

\begin{scope}[shift={(-7,0)},scale=1.5]
\node (1) {a};
\node (2) at (2,2) {b};
\node (3) at (2,-2) {c};	
\node[hull] (Y1) at (0,2.5) {};
\node[label] (l) at (1,3.6) {$X_1$};

\path
(1) edge [loop left] (1)
(1) edge (2)
(1) edge (3)
(2) edge (3)
(2) edge [bend left=20] (3)
(2) edge [bend right=20] (3)

(1) edge [orange,dashed,line width=3pt,bend left=30,
	postaction={decoration={markings,mark=at position 0.6 with {\arrow[orange,line width=1mm]{triangle 60}}},decorate}] (2)
(1) edge [orange,dashed,line width=3pt, bend left=50,
	postaction={decoration={markings,mark=at position 0.6 with {\arrow[orange,line width=1mm]{triangle 60}}},decorate}] (2)
(2) edge [blue,dashed, bend left=40] (3)
(2) edge [blue,dashed, bend left=50] (3)
(1) edge [orange,dashed,line width=3pt, bend right=30,
	postaction={decoration={markings,mark=at position 0.6 with {\arrow[orange,line width=1mm]{triangle 60}}},decorate}] (3)
(1) edge [orange,dashed,line width=3pt, bend right=50,
	postaction={decoration={markings,mark=at position 0.6 with {\arrow[orange,line width=1mm]{triangle 60}}},decorate}] (3);
	\end{scope}

\begin{scope}[shift={(7,0)},scale=2]

\node (1) {a};
\node (2) at (-1,1) {abc};
\node (3) at (1,1) {abc};
\node (4) at (-1,-1) {abc};
\node (5) at (1,-1) {abc};
\node (6) at (-2,2) {bc};
\node (7) at (2,2) {bc};
\node (8) at (-2,-2) {bc};
\node (9) at (2,-2) {bc};
\node[label] (X2) at (0,2.7) {$X_2$};

\node[label] (v1) at (-1.5,-1) {\huge $v_1$};
\node[label] (v2) at (0.5,-1) {\huge$v_2$};
\node[label] (w1) at (-1.5,1) {\huge$w_1$};
\node[label] (w2) at (0.5,1) {\huge$w_2$};

\path
(1) edge [ultra thick] (2)
(1) edge [ultra thick] (3)
(1) edge [ultra thick] (4)
(1) edge [ultra thick] (5)
(2) edge [ultra thick] (4)
(3) edge [ultra thick] (5)
(2) edge (6)
(3) edge (7)
(4) edge (8)
(5) edge (9)
(6) edge [bend left=10] (8)
(6) edge [bend right=10] (8)
(7) edge [bend left=10] (9)
(7) edge [bend right=10] (9)
(6) edge [bend left=30] (5)
(8) edge [bend right=30] (3)
(7) edge [bend right=30] (2)
(9) edge [bend left=30] (4)

(6) edge [blue,dashed,bend right=30] (8)
(6) edge [blue,dashed,bend right=40] (8)
(6) edge [blue,dashed,bend left=20] (1)
(6) edge [blue,dashed,bend left=30] (1)
(1) edge [blue,dashed,bend left=20] (8)
(1) edge [blue,dashed,bend left=30] (8)

(4) edge [green,dashed,line width=3pt,
	postaction={decoration={markings,mark=at position 0.6 with {\arrow[green,line width=1mm]{triangle 60}}},decorate},bend left=15] (2)
(4) edge [green,dashed,line width=3pt,
	postaction={decoration={markings,mark=at position 0.6 with {\arrow[green,line width=1mm]{triangle 60}}},decorate},bend left=35] (2)
(4) edge [green,dashed,line width=3pt,
	postaction={decoration={markings,mark=at position 0.6 with {\arrow[green,line width=1mm]{triangle 60}}},decorate},bend left=55] (2)
(2) edge [blue,dashed,bend left=20] (7)
(9) edge [blue,dashed,bend left=20] (4)
(7) edge [blue,dashed,bend left=20] (3)
(7) edge [blue,dashed,bend right=20] (3)
(5) edge [blue,dashed,bend left=20] (9)
(5) edge [blue,dashed,bend right=20] (9)
(5) edge [red,dashed,line width=3pt,
	postaction={decoration={markings,mark=at position 0.6 with {\arrow[red,line width=1mm]{triangle 60}}},decorate},bend right=25] (3)
(5) edge [red,dashed,line width=3pt,
	postaction={decoration={markings,mark=at position 0.6 with {\arrow[red,line width=1mm]{triangle 60}}},decorate},bend right=45] (3)

(9) edge [blue,dashed,bend right=40] (7);
\end{scope}

\end{tikzpicture}
}
\caption{\small Counting left and right pieces.}
\end{figure}
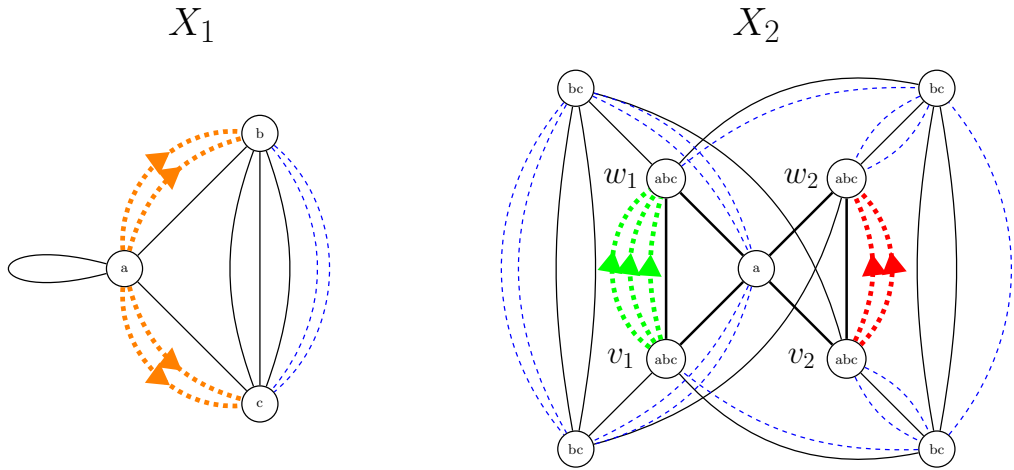

Each lift of $v_1$ to a left piece in $\hat{X}$ will map down to $a$ in $X_1$, so it has all three of its outgoing green edges in $A_g$. Each lift of $w_1$ to a right piece in $\hat{X}$ will map down to either $b$ or $c$ in $X_1$, and exactly two of its incoming green edges will be in $A_g$ since its other incident dashed edge must map to an edge in $X_1$ connecting $b$ to $c$.
 Each edge in $A_g$ has its initial vertex (the $v_1$ end)
 in a left piece and its terminus (the $w_1$ end)
  in a right piece, so we deduce the following equation.
$$3(\text{\# left pieces})=|A_g|=2(\text{\# right pieces})$$

Meanwhile, each lift of $v_2$ to a right piece in $\hat{X}$ will map down to $a$ in $X_1$, so both its outgoing red edges will be in $A_r$. And each lift of $w_2$ to a left piece will map down to either $b$ or $c$ in $X_1$, and both its incoming red edges will be in $A_r$ since its other incident dashed edges must map to edges connecting $b$ to $c$ in $X_1$. Each edge in $A_r$ has its 
origin (the $v_2$ end) in a right piece and its terminus (the $w_2$ end) in a left piece, so we deduce the following equation.
$$2(\text{\# left pieces})=|A_r|=2(\text{\# right pieces})$$

These two equations are inconsistent, so we get our desired contradiction, proving that $\hat{X}$ does not exist.
\qed
\bigskip

\begin{remk}
	We believe that our example for Theorem \ref{t:unimodularexample} is probably minimal among examples where $X_2$ decomposes into solid and dashed edges, with the solid edges forming a graph $Y$, and where $X$ is the regular cover of $X_2$ 
	corresponding to the normal subgroup $\<\!\<\pi_1Y\>\!\>$ of $\pi_1X_2$.
%with no blue cycles and such that each black component is isomorphic to $Y$. 
A clue as to why it might be minimal is the fact that the covering $Y\to Y_1$ that we use is the smallest example of a non-regular covering between non-vertex-transitive graphs. We do not give a proof that both these properties on $Y\to Y_1$ are necessary, but we 
can share an insight as to why choosing $Y=Y_1$ will not work. Indeed, if $Y=Y_1$ then we could take the cone
	$$Y\times[0,1]/[(y_1,0)\sim(y_2,0)],$$
	and glue it to $X_1$ by identifying $Y\subset X_1$ with $Y\times\{1\}$ to produce a simplicial complex $Z_1$;
	similarly we could glue the cone to each copy of $Y$ that appears in $X$, to produce a simply connected simplicial complex $Z$.
	The covering $X\to X_1$ would then extend to a covering $Z\to Z_1$, and this would be a regular covering since $Z$ is simply connected. We could then deduce that $X\to X_1$ was a regular covering, and find a finite graph $\hat{X}$ fitting into diagram (\ref{nohatX2}) by applying Theorem \ref{t:qL}.
\end{remk}

 \bigskip
\section{Space of covers}\label{s:measures}

We conclude with a technical result that extends our previous comments on the interplay between the regularity
of coverings, cocompactness and unimodularity. 
%As in our previous encounters with unimodularity, we 
%will focus on the existence of a uniform lattice in ${\rm{Aut}}(X)$ as the key property.
Bass and Kulkarni \cite{BL} prove that if $X$ is a locally finite tree on which ${\rm{Aut}}(X)$ acts
cocompactly, then ${\rm{Aut}}(X)$ is unimodular (in the sense that it supports a Haar measure that is
both left and right invariant) if and only if it contains a uniform lattice. We
shall not appeal to this fact, but we do need the more elementary fact that a
locally compact group that admits a lattice is unimodular; see \cite{rag}[Chapter I, Remark 1.9].

 In the following definition, as elsewhere in this article, we work in the category of metric graphs in which
 all edges have length $1$. We allow only morphisms that map vertices to vertices and restrict to local isometries
 on edges. In particular, this is true of the covering maps that we allow.

\begin{defn}
	Let $X$ be an infinite connected graph that covers a finite graph $Y$. 
	We define $\mathscr{C}(X,Y)$ to be the space of covering maps $X\to Y$;
	this is a closed subspace of the product space $E^\pm(Y)^{E^\pm(X)}$, endowed with the Tychonoff topology.
	%and we endow it with the subspace topology (technically $EX$ and $EY$ should be the sets of oriented edges here). This makes $\mathscr{C}(X,Y)$ a compact Hausdorff space. 
	\end{defn}
	
	If a group $G$ acts on $X$ then it acts continuously on the compact Hausdorff space 
	$\mathscr{C}(X,Y)$. Thus it makes sense to talk about a $G$-invariant probability measure on $\mathscr{C}(X,Y)$.

\begin{thm}\label{thm:probmeasure2}
Let $X$ be a locally finite infinite quasitree such that $\rm{Aut}(X)$ contains a uniform lattice, and suppose that
$X$ covers a finite graph $Y$. Then there exists an $\rm{Aut}(X)$-invariant probability measure on $\mathscr{C}(X,Y)$ if and only if there exists a finite cover $\hat{Y}\to Y$ and a regular covering map $X\to\hat{Y}$.
\end{thm}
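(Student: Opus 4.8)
The two implications are proved separately; the reverse is elementary and the forward direction carries all the weight.

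For the reverse implication, suppose $\hat{Y}\to Y$ is a finite cover admitting a regular covering $r\colon X\to\hat{Y}$, with Galois group $N$, and let $p_0\colon X\to Y$ be the composite covering. The first observation is that, for any covering $p\in\mathscr{C}(X,Y)$, the stabiliser of $p$ under the $\Aut(X)$-action is precisely the deck group of $p$, and this acts freely on $X$ because $X$ is connected. In particular $\Aut(X)_{p_0}$ is a closed subgroup containing $N$, and $N$ is a torsion-free uniform lattice since $\hat{Y}=X/N$ is finite; hence $\Aut(X)_{p_0}$ is discrete, cocompact, and therefore unimodular. As $\Aut(X)$ contains a uniform lattice it is unimodular \cite{rag}, so $\Aut(X)/\Aut(X)_{p_0}$ carries an $\Aut(X)$-invariant probability measure; transporting it along the orbit map $\Aut(X)/\Aut(X)_{p_0}\to\Aut(X)\!\cdot\!p_0\subseteq\mathscr{C}(X,Y)$ — a homeomorphism, the orbit being compact and $\mathscr{C}(X,Y)$ Hausdorff — yields the required invariant measure.

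For the forward implication, let $\mu$ be an $\Aut(X)$-invariant probability measure on $\mathscr{C}(X,Y)$; passing to an ergodic component we may assume $\mu$ is ergodic. The goal is to produce a single covering $p\in\mathscr{C}(X,Y)$ whose deck group $D_p=\Aut(X)_p$ is a \emph{uniform} lattice: for then $\hat{Y}:=X/D_p$ is a finite graph, $X\to\hat{Y}$ is regular with Galois group $D_p$, and $p$ factors as $X\to\hat{Y}\to Y$, so $\hat{Y}\to Y$ is the desired finite cover. The first ingredient is a mass-transport identity. Since $\Aut(X)$ contains a uniform lattice and $X$ is a quasitree, it contains a torsion-free uniform lattice $\Gamma$ (a group acting properly cocompactly on a quasitree is virtually free, hence virtually torsion-free), and $E^{\pm}(X)$ is then a free $\Gamma$-set with finitely many orbits. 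Using $\Gamma$-invariance of $\mu$ and the convention for the $\Aut(X)$-action one checks that, for each $\vec{e}\in E^{\pm}(Y)$, the quantity $\mathbb{E}_\mu\!\bigl[\#\{\epsilon\in\mathcal{F}:p(\epsilon)=\vec{e}\}\bigr]$ is finite and independent of the choice of edge fundamental domain $\mathcal{F}$ for $\Gamma$ on $E^{\pm}(X)$. Evaluating it on the fundamental domain $\{\epsilon:\epsilon(0)\in\mathcal{F}_V\}$ attached to a vertex fundamental domain $\mathcal{F}_V$, and on the reversed fundamental domain $\{\epsilon:\epsilon(1)\in\mathcal{F}_V\}$, and using that $p$ restricts to a bijection of stars, this expectation equals both $\mathbb{E}_\mu[\#\{v\in\mathcal{F}_V:p(v)=\vec{e}(0)\}]$ and $\mathbb{E}_\mu[\#\{v\in\mathcal{F}_V:p(v)=\vec{e}(1)\}]$. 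Hence the fibre densities $m_w:=\mathbb{E}_\mu[\#\{v\in\mathcal{F}_V:p(v)=w\}]$ agree across every edge of $Y$ and so are constant in $w\in V(Y)$, with $\sum_{w}m_w=|V(X/\Gamma)|<\infty$. This is the averaged form of the counting relations that Example \ref{exmp:nouniformlattice} and Theorems \ref{thm:cocompactexample}--\ref{t:unimodularexample} were engineered to violate, and it already shows no such $\mu$ exists there.

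The main obstacle is to upgrade ``balanced on average'' to the existence of one covering with \emph{cocompact} deck group. The plan is to study the invariant random subgroup $\Phi_*\mu$ of $\Aut(X)$ obtained from $\Phi\colon p\mapsto D_p$, and to show that $\mu$-almost every $D_p$ is a uniform lattice. Here one argues first that $\mu$-a.e.\ $D_p$ has finite covolume: the covolume of $D_p$ equals the volume of the ``regular core'' $X/D_p$ through which $p$ factors, infinite-covolume deck groups correspond to infinite cores, and a positive-measure set of such coverings cannot be reconciled with $\mu$ being a probability measure (if such a set carried positive mass, an ergodicity/measurable-selection argument would realise $\mu$ on orbits $\Aut(X)/D_p$ supporting no invariant probability measure). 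One then rules out the non-uniform case using $\sum_w m_w<\infty$ together with the finiteness of $Y$ — here the reduction to a locally finite tree provided by Theorem \ref{maketree} should give the cleanest route, as invariant random subgroups of automorphism groups of trees are well understood. Having obtained $\mu$-a.e.\ a covering $p$ with $D_p$ a uniform lattice, one may if desired bypass the last step of extracting $\hat{Y}$ directly and instead finish by applying the Symmetry-Restricted Leighton Theorem \ref{t:qL2} to the two regular coverings $X\to X/\Gamma$ and $X\to X/D_p$ of the quasitree $X$. I expect this invariant-random-subgroup step — excluding ergodic invariant measures that ``diffuse'' over coverings with small or trivial deck group — to be where the genuine difficulty lies; the mass-transport identity and the reverse implication are routine by comparison.
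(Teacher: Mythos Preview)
Your reverse implication is correct and essentially matches the paper's argument: push forward the invariant measure on $\Aut(X)/\Gamma$ (or, equivalently, on $\Aut(X)/\Aut(X)_{p_0}$) along the orbit map.

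The forward implication, however, has a genuine gap, and the strategy you outline is not the one that works. Your plan is to show that for an ergodic invariant measure $\mu$, the deck group $D_p$ is a uniform lattice for $\mu$-almost every $p$, and you acknowledge that excluding the ``diffuse'' case is the hard step. In fact this step cannot be carried out as stated: there is no reason that an ergodic $\Aut(X)$-invariant probability measure on $\mathscr{C}(X,Y)$ should be supported on coverings with cocompact deck group. Your parenthetical justification --- that a positive-mass set of coverings with infinite regular core would force $\mu$ to live on single orbits $\Aut(X)/D_p$ carrying no invariant probability measure --- conflates the measure on an individual orbit with the measure on the whole space. This is the same fallacy as arguing that the Bernoulli shift on $\{0,1\}^{\mathbb Z}$ admits no invariant probability measure because generic orbits are copies of $\mathbb Z$; the invariant measure is spread across uncountably many such orbits. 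The IRS literature for automorphism groups of trees does not rescue this: non-atomic IRSs supported on non-cocompact subgroups do exist. So the covering $X\to\hat Y$ promised by the theorem need not arise as $X\to X/D_p$ for any $p$ in the support of $\mu$.

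What the paper does instead is constructive and quite different from your outline. Using the equivariant quasi-isometry $f:X\to T$ to a tree from Theorem~\ref{maketree}, one chops $X$ into bounded \emph{polyhedra} $P(v)=f^{-1}(\mathrm{star}(v))$ and \emph{faces} $F(e)=f^{-1}(\mathring e)$. A \emph{polyhedral pair} is a $\Gamma$-orbit of polyhedra together with a map to $Y$ extendible to a cover; similarly for \emph{face pairs}. The measure $\mu$ is used only to assign a real weight $\omega(\bfP)=\mu\{\theta:\theta|_{P(v)}=\phi\circ\alpha_v^{-1}\}$ to each polyhedral pair $\bfP$, and these weights automatically satisfy the linear \emph{Gluing Equations}
\[
\sum_{\bfP\in\overleftarrow{\bfF}}\omega(\bfP)=\omega(\bfF)=\sum_{\bfP\in\overrightarrow{\bfF}}\omega(\bfP)
\]
for every face pair $\bfF$. (Your mass-transport identity is a shadow of this, but at the much coarser level of single vertices of $Y$ rather than whole polyhedra, which is why it does not suffice.) Rationality of the solution polytope then yields a nonzero \emph{integer} solution, and one glues $\omega(\bfP)$ copies of each polyhedral pair along faces, guided by a finite graph $Z$ covering $T/\Gamma$, to build $\hat Y$ directly. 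The regular covering $X\to\hat Y$ has deck group a finite-index subgroup $\hat\Gamma<\Gamma$; in particular the resulting element of $\mathscr{C}(X,Y)$ is manufactured from local pieces appearing in the support of $\mu$ but need not itself lie in that support.
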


\begin{remk}
Theorem \ref{t:unimodularexample} gives an example of a quasitree $X$ 
that covers finite graphs $X_1$ and $X_2$, with the covering $X\to X_2$ being regular, such that there is no covering $X\to\hat{X}$ of a finite graph $\hat{X}$ that covers $X_1$ and $X_2$.
There cannot exist a regular covering $X\to\hat{Y}$ of a finite graph $\hat{Y}$ that covers $X_1$, as then Theorem \ref{t:qL} would provide the aforementioned prohibited graph $\hat{X}$. Hence Theorem \ref{thm:probmeasure2} implies that $\mathscr{C}(X,X_1)$ admits no $\rm{Aut}(X)$-invariant probability measure.
\end{remk}

\begin{remk}
	Theorem \ref{thm:probmeasure2} is false if we remove the assumption that $\Aut(X)$ contains a uniform lattice. For instance if $\Aut(X)$ is compact then the right-invariant Haar measure on $\Aut(X)$ pushes forward to give an $\Aut(X)$-invariant probability measure on $\mathscr{C}(X,Y)$ via any orbit map $\Aut(X)\to\mathscr{C}(X,Y)$. But there cannot exist a regular covering of a finite graph $X\to\hat{Y}$, because its Galois group would be an infinite lattice in $\Aut(X)$, contradicting compactness of $\Aut(X)$. An example of such a covering $X\to Y$ is depicted in Figure \ref{fig:compactaut} - $\Aut(X)$ is compact because all automorphisms of $X$ must fix the unique loop.
\end{remk}

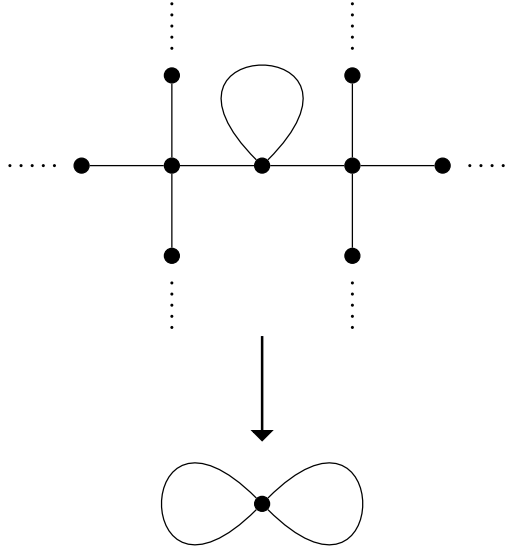
\begin{figure}[H]
	\centering
	\scalebox{0.6}{
		\begin{tikzpicture}[auto,node distance=2cm,
			thick,every node/.style={circle,fill,draw,font=\small},
			every loop/.style={looseness=40},
			]
			\tikzstyle{hull}=[draw=none, fill=none]
			\tikzstyle{dots}=[line width=2pt, line cap=round, dash pattern=on 0pt off 7pt, shorten <=12]
			
			%Y
			\begin{scope}[scale=2]
				\node (1) {};
				\node (2) at (1,0){};
				\node (3) at (1,1){};
				\node (4) at (2,0){};
				\node (5) at (1,-1){};
				\node[hull] (6) at (1,2){};
				\node[hull] (7) at (3,0){};
				\node[hull] (8) at (1,-2){};
				\node (2a) at (-1,0){};
				\node (3a) at (-1,1){};
				\node (4a) at (-2,0){};
				\node (5a) at (-1,-1){};
				\node[hull] (6a) at (-1,2){};
				\node[hull] (7a) at (-3,0){};
				\node[hull] (8a) at (-1,-2){};
				
				\node[hull] (Y) at (0,-1.8) {};

				\path
				(1) edge [in=135,out=45,loop] (1)
				(1) edge (2)
				(2) edge (3)
				(2) edge (4)
				(2) edge (5)
				(3) edge [dots] (6)
				(4) edge [dots] (7)
				(5) edge [dots] (8)
				
				(1) edge (2a)
				(2a) edge (3a)
				(2a) edge (4a)
				(2a) edge (5a)
				(3a) edge [dots] (6a)
				(4a) edge [dots] (7a)
				(5a) edge [dots] (8a);
				
			\end{scope}
			
			%Y1
			\begin{scope}[shift={(0,-7.5)},scale=2]
				
				\node (1){};
				
				\node[hull] (Y1) at (0,0.6) {};
				
				\path
				(1) edge [in=45,out=-45,loop] (1)
				(1) edge [in=135,out=-135,loop] (1);
				
			\end{scope}

			\draw[draw=black,fill=blue,-triangle 90, ultra thick] (Y) -- (Y1);
			
		\end{tikzpicture}
	}
	\caption{\small A covering $X\to Y$ with $\Aut(X)$ compact.}\label{fig:compactaut}
\end{figure}

\noindent{\em{Proof of Theorem \ref{thm:probmeasure2}.}}
First suppose that there is a covering $p:X\to Y$ that factors through a regular covering $X\to\hat{Y}$ with $\hat{Y}$ finite, and let $\Gamma <\Aut(X)$ be the Galois group of $X\to\hat{Y}$. The existence of $\G$ shows that
$\Aut(X)$ is unimodular, so has a Haar measure $\mu$ which is both left and right invariant. As $\Gamma <\Aut(X)$ is a uniform lattice, the Haar measure on $\Aut(X)$ induces a finite right invariant measure $\bar{\mu}$ on the right coset space $(\Aut(X):\Gamma)$, which we may assume is a probability measure by rescaling. We then have a continuous right $\Aut(X)$-invariant map
\begin{align*}
\Phi:(\Aut(X):\Gamma)&\to\mathscr{C}(X,Y)\\
\Gamma\phi&\mapsto p\circ\phi.
\end{align*}	
This is well-defined because the action of $\Gamma$ on $X$ preserves fibres of
the covering map $X\to\hat{Y}$ (and hence also of $p:X\to Y$). Thus we can push forward the probability measure $\bar{\mu}$ on $(\Aut(X):\Gamma)$ to an $\Aut(X)$-invariant probability measure $\lambda$ on $\mathscr{C}(X,Y)$.
\bigskip
	
Conversely, suppose that we have an $\Aut(X)$-invariant probability measure $\lambda$ on $\mathscr{C}(X,Y)$. We apply Theorem \ref{maketree} to get an $\Aut(X)$-invariant quasi-isometry $f:X\to T$ to a tree $T$. $\Aut(X)$ contains a uniform lattice by assumption, and since $X$ is a quasitree we may pass to a free uniform finite-index sublattice $\Gamma <\Aut(X)$, and $\Gamma$ also acts freely and
cocompactly on $T$. By passing to a further finite-index sublattice if necessary, we may assume that the quotient $T/\Gamma$ has no 1-cycles.

We now define polyhedral pairs and face pairs, which we will eventually glue together to build the intermediate cover $\hat{Y}$; this construction is closely analogous to those in \cite{graphfins} and in the appendix of \cite{TwogenLeighton}.
For $v\in V(T)$, let  $\rm{star}(v)\subset T$ be the union of $v$ and the interiors of the edges incident to it, and let $P(v):=f^{-1}(\rm{star}(v))$, which we call the \emph{polyhedron at $v$}. (The term ``polyhedron" is suggestive but this set might be disconnected and need not be a subgraph of $X$; it will be bounded however.) By boundedness, we know that there are only finitely many maps $\eta:P(v)\to Y$ that extend to a cover $X\to Y$, and we denote this set of maps by $\Map(v)$.  For $e\in E(T)$, let $\mathring{e}$ be the interior of $e$, and define $F(e):=f^{-1}(\mathring{e})$ to be the \emph{face at $e$}, and let $\Map(e)$ be the (finite) set of maps $\eta: F(e)\to Y$ that extend to covers $X\to Y$.

We use square brackets to denote $\Gamma$-orbits of edges and vertices in $T$. Pick $v\in V(T)$, let $P[v]$ be a space isometric to $P(v)$ via an isometry $\alpha_v:P[v]\to P(v)$, and define $\Map[v]:=\Map(v)\circ\alpha_v$. For any $u\in V(T)$ with $[u]=[v]$ there is a unique $g\in\Gamma$ with $g(v)=u$; this means that $gP(v)=P(u)$, and we get a bijection
\begin{align*}
\Map(u)&\to\Map(v)\\
\eta&\mapsto \eta\circ g,
\end{align*}
because $\eta\in\Map(u)$ extending to $\bar{\eta}:X\to Y$ implies that $\eta\circ g$ extends to $\bar{\eta}\circ g$. Thus we can define $\alpha_u:=g\circ\alpha_v$ and  get   $\Map[v]=\Map(u)\circ\alpha_u$. Similarly, we have spaces $F[e]$ equipped with sets of maps $\Map[e]$, such that for any $e\in E(T)$ there is an isometry $\alpha_e:F[e]\to F(e)$, and  $\Map[e]=\Map(e)\circ\alpha_e$, and for $g\in\Gamma$ we have $\alpha_{g(e)}=g\circ\alpha_e$. For $v\in V(T)$ and $\phi\in\Map[v]$, we say that $\bfP=(P[v],\phi)$ is a \emph{polyhedral pair}, while for $e\in E(T)$ and $\varphi\in\Map[e]$ we say that $\bfF=(F[e],\varphi)$ is a \emph{face pair}. We write $\mathcal{P}$ and $\mathcal{F}$ for the collections of polyhedral pairs and face pairs respectively.

If $e$ is an edge incident at a vertex $v$ in $T$, then we have an inclusion $F(e)\xhookrightarrow{} P(v)$. We can then
 define an inclusion $\iota_{e,v}:F[e]\to P[v]$ via the following commutative diagram:
\begin{equation}
\begin{tikzcd}[
ar symbol/.style = {draw=none,"#1" description,sloped},
isomorphic/.style = {ar symbol={\cong}},
equals/.style = {ar symbol={=}},
subset/.style = {ar symbol={\subset}}
]
F[e]\ar{d}{\alpha_e}\ar[hook]{r}{\iota_{e,v}}&P[v]\ar{d}{\alpha_v}\\
F(e)\ar[r,hook]&P(v)
\end{tikzcd}
\end{equation}
Moreover, for $g\in\Gamma$ we have that $\iota_{g(e),g(v)}=\iota_{e,v}$. Since no $\Gamma$-translate of $e$ is incident at $v$ (by freeness of the action and the fact that $T/\Gamma$ has no 1-cycles), we conclude that $\iota_{e,v}$ only depends on $[e]$ and $[v]$; we write it as $\iota_{[e],[v]}$.

As $\Gamma$ acts freely on $T$, we can choose a $\Gamma$-invariant orientation on the edges of $T$, and for each $e\in E(T)$ we will refer to its two endpoints as the vertices on the \emph{left} and \emph{right} of $e$. If $v$ is on the left (resp. right) of $e$, and $\bfP=(P[v],\phi)$ and $\bfF=(F[e],\varphi)$ are such that $\phi\circ\iota_{[e],[v]}=\varphi$, then we say that $\bfP$ is the \emph{left} (\emph{right}) of $\bfF$, and denote this by $\bfP\in\overleftarrow{\bfF}$ ($\bfP\in\overrightarrow{\bfF}$). If $\bfP_1=(P[v_1],\phi_1)\in\overleftarrow{\bfF}$ and $\bfP_2=(P[v_2],\phi_2)\in\overrightarrow{\bfF}$, then we can glue $\bfP_1$ and $\bfP_2$ together along $\bfF$ and the maps $\iota_{[e],[v_1]}$ and $\iota_{[e],[v_2]}$, and we can define a map
$$\phi_1\cup\phi_2:P[v_1]\cup_{F[e]}P[v_2]\to Y.$$

To construct the intermediate cover $X\to\hat{Y}\to Y$, we will take $\omega(\bfP)$ copies of each polyhedral pair $\bfP$ and $\omega(\bfF)$ copies of each face pair $\bfF$, and each face pair will be glued to one polyhedral pair on the left and one polyhedral pair on the right. We will discuss the covering maps $X\to\hat{Y}$ and $\hat{Y}\to Y$ later, but for the gluing to even be possible the numbers $\omega(\bfP)$ and $\omega(\bfF)$ must satisfy the following Gluing Equations for each face pair $\bfF$:
\begin{equation}\label{GluingEquations}
\sum_{\bfP\in\overleftarrow{\bfF}}\omega(\bfP)=\omega(\bfF)=\sum_{\bfP\in\overrightarrow{\bfF}}\omega(\bfP)
\end{equation}
We note that by \cite[claim on p19]{Agoltheorem}, a non-zero solution $\omega:\mathcal{P}\cup\mathcal{F}\to\mathbb{R}_{\geq0}$ to the Gluing Equations implies the existence of a non-zero integer solution $\omega:\mathcal{P}\cup\mathcal{F}\to\mathbb{Z}_{\geq0}$. We now show how the $\Aut(X)$-invariant probability measure $\lambda$ on $\mathscr{C}(X,Y)$ can be used to solve the Gluing Equations.\\
\begin{claim}
	The weights
	\begin{align*}
	\omega(P[v],\phi)=\lambda(\{\theta\mid\theta|_{P(v)}=\phi\circ\alpha_v^{-1}\})\\
	\omega(F[e],\varphi)=\lambda(\{\theta\mid\theta|_{F(v)}=\varphi\circ\alpha_e^{-1}\})
	\end{align*}
	solve the Gluing Equations (\ref{GluingEquations}).
\end{claim}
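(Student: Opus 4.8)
The plan is to reduce the Gluing Equations to one elementary fact about cylinder sets in $\mathscr{C}(X,Y)$. Fix a face pair $\bfF=(F[e],\varphi)$, let $e\in E(T)$ be the representative edge used to build $F[e]$, with left endpoint $v$ and right endpoint $w$, and put $\psi:=\varphi\circ\alpha_e^{-1}\in\Map(e)$. It is enough to prove $\sum_{\bfP\in\overleftarrow{\bfF}}\omega(\bfP)=\omega(\bfF)$, since the equality with $\overrightarrow{\bfF}$ follows by substituting $w$ for $v$. First I would check that $\omega$ is well-defined on $\Gamma$-orbits: for $u=g(v)$ with $g\in\Gamma$ one has $\alpha_u=g\circ\alpha_v$ and $P(u)=gP(v)$, so the set $\{\theta\mid\theta|_{P(u)}=\phi\circ\alpha_u^{-1}\}$ is the $g$-translate of $\{\theta\mid\theta|_{P(v)}=\phi\circ\alpha_v^{-1}\}$, and these have equal $\lambda$-measure since $\lambda$ is $\Aut(X)$-invariant and $\Gamma<\Aut(X)$. (Every cylinder set that appears is clopen, hence $\lambda$-measurable, because $P(v)$ and $F(e)$ are bounded subsets of the locally finite graph $X$ and so meet only finitely many edges.) This is essentially the only point at which the invariance hypothesis on $\lambda$ is used.

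Next I would make the indexing explicit. By definition $\bfP\in\overleftarrow{\bfF}$ exactly when $\bfP=(P[v],\phi)$ with $\phi\in\Map[v]$ and $\phi\circ\iota_{[e],[v]}=\varphi$; transporting through $\alpha_v$ and using the square defining $\iota_{e,v}$ (which says that $\alpha_v\circ\iota_{e,v}$ is the inclusion $F(e)\hookrightarrow P(v)$ precomposed with $\alpha_e$), the assignment $\phi\mapsto\eta:=\phi\circ\alpha_v^{-1}$ gives a bijection from $\overleftarrow{\bfF}$ onto $\{\eta\in\Map(v)\mid\eta|_{F(e)}=\psi\}$ carrying $\omega(\bfP)$ to $\lambda(\{\theta\mid\theta|_{P(v)}=\eta\})$. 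The heart of the matter is then the observation that the cylinder sets $\{\theta\in\mathscr{C}(X,Y)\mid\theta|_{P(v)}=\eta\}$, as $\eta$ ranges over the finite set $\Map(v)$, partition $\mathscr{C}(X,Y)$, because every covering $X\to Y$ restricts on $P(v)$ to a member of $\Map(v)$ by the very definition of that set. Since $F(e)\subset P(v)$ and every $\eta\in\Map(v)$ restricts to an element $\eta|_{F(e)}\in\Map(e)$, refining this partition gives
\begin{equation*}
\{\theta\in\mathscr{C}(X,Y)\mid\theta|_{F(e)}=\psi\}=\bigsqcup_{\eta\in\Map(v),\ \eta|_{F(e)}=\psi}\{\theta\in\mathscr{C}(X,Y)\mid\theta|_{P(v)}=\eta\}.
\end{equation*}
Applying $\lambda$ to both sides, and using $\omega(\bfF)=\lambda(\{\theta\mid\theta|_{F(e)}=\psi\})$ together with the bijection above, yields $\omega(\bfF)=\sum_{\bfP\in\overleftarrow{\bfF}}\omega(\bfP)$; the same computation with $w$ in place of $v$ gives the remaining half of the Gluing Equations.

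I expect the only real obstacle to be bookkeeping: one must make sure that $\overleftarrow{\bfF}$, once transported through the isometries $\alpha_v$ and $\alpha_e$, corresponds exactly to $\{\eta\in\Map(v)\mid\eta|_{F(e)}=\psi\}$, with no over- or under-counting arising from the choice of orbit representatives. The freeness of the $\Gamma$-action and the absence of $1$-cycles in $T/\Gamma$ --- arranged earlier so that $\iota_{[e],[v]}$ depends only on the orbits $[e]$ and $[v]$ --- are precisely what make this identification clean.
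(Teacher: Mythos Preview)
Your proposal is correct and follows essentially the same approach as the paper: both arguments verify well-definedness via $\Gamma$-invariance of $\lambda$, then obtain the gluing equation by observing that the cylinder set $\{\theta\mid\theta|_{F(e)}=\varphi\circ\alpha_e^{-1}\}$ decomposes as the disjoint union of the cylinder sets $\{\theta\mid\theta|_{P(v)}=\phi\circ\alpha_v^{-1}\}$ indexed by $\bfP\in\overleftarrow{\bfF}$, and applying $\lambda$. Your version is simply more explicit about the bookkeeping involved in transporting along $\alpha_v$ and $\alpha_e$.
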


	Firstly, note that the above formulae for $\omega$ are independent of the choice of vertex and edge in the orbits $[v]$ and $[e]$ because $\lambda$ is $\Gamma$-invariant. If $v$ is on the left of $e$ and $\bfF=(F[e],\varphi)$ is a face pair, then $(P[v],\phi)\in\overleftarrow{\bfF}$ if and only if $\phi\circ\alpha_v^{-1}:P(v)\to Y$ is an extension of the map $\varphi\circ\alpha_e^{-1}:F(e)\to Y$; and so 
	$$\{\theta\mid\theta|_{F(v)}=\varphi\circ\alpha_e^{-1}\}=\bigsqcup_{(P[v],\phi)\in\overleftarrow{\bfF}}\{\theta\mid\theta|_{P(v)}=\phi\circ\alpha_v^{-1}\}.$$
	Applying $\lambda$ to both sides yields
	$$\omega(\bfF)=\sum_{\bfP\in\overleftarrow{\bfF}}\omega(\bfP),$$
	as required. The argument works similarly if $v$ is on the right of $e$.
	Finally, the map $\omega:\mathcal{P}\cup\mathcal{F}\to\mathbb{Z}_{\geq0}$ is non-zero because
	$$1=\lambda(\mathscr{C}(X,Y))=\sum_{\phi\in\Map[v]}\lambda(\{\theta\mid\theta|_{P(v)}=\phi\circ\alpha_v^{-1}\})=\sum_{\phi\in\Map[v]}\omega(P[v],\phi)$$
	for $v\in V(T)$.\\

Now assume that $\omega:\mathcal{P}\cup\mathcal{F}\to\mathbb{Z}_{\geq0}$ is a non-zero integer solution to the Gluing Equations. We can then take $\omega(\bfP)$ copies of each polyhedral pair $\bfP$ and $\omega(\bfF)$ copies of each face pair $\bfF$, and glue them all together as described earlier to form a space $\hat{Y}$. This gluing can be described by a finite graph $Z$: take a vertex for each copy of a polyhedral pair and an edge for each copy of a face pair, then connect edges to vertices according to which face pairs get glued to which polyhedral pairs.
We can then define a covering $Z\to T/\Gamma$, where a vertex associated to $(P[v],\phi)$ maps to $[v]$ and an edge associated to $(F[e],\varphi)$ maps to $[e]$. The covering $T\to T/\Gamma$ then factors as $T\to Z\to T/\Gamma$ for some covering $T\to Z$, which on vertices and edges we denote by $v\mapsto\bar{v}$ and $e\mapsto\bar{e}$. It will be convenient to always refer to vertices and edges of $Z$ as images $\bar{v}$ and $\bar{e}$ of vertices and edges of $T$.
We can then write $\bfP_{\bar{v}}=(P[v],\phi_{\bar{v}})$ for the polyhedral pair associated to a vertex $\bar{v}\in V(Z)$ and $\bfF_{\bar{e}}=(P[e],\varphi_{\bar{e}})$ for the face pair associated to an edge $\bar{e}\in E(Z)$. If $\bar{v}$ is on the left (right) of $\bar{e}$, then $\bfP_{\bar{v}}$ will be glued to $\bfF_{\bar{e}}$ on the left (right) to form $\hat{Y}$ - in particular $\bfP_{\bar{v}}\in\overleftarrow{\bfF_{\bar{e}}}$ (resp. $\bfP_{\bar{v}}\in\overrightarrow{\bfF_{\bar{e}}}$). Therefore, we can write $\hat{Y}$ as
$$\hat{Y}=\faktor{\bigsqcup_{\bar{v}\in V(Z)}P[v]\times\{\bar{v}\}\sqcup\bigsqcup_{\bar{e}\in E(Z)}F[e]\times\{\bar{e}\}}{(\iota_{[e],[v]}(a),\bar{v})\sim (a,\bar{e})\text{ for $\bar{e}$ incident at $\bar{v}$.}}$$
Note that the images of the maps $\iota_{[e],[v]}$ in a polyhedron $P[v]$ are disjoint, so each point in $\hat{Y}$ either lies in a single polyhedron $P[v]\times\{\bar{v}\}$, or it lies in a face $F[e]\times\{\bar{e}\}$ as well as in the two polyhedra $P[v]\times\{\bar{v}\}$ with $\bar{e}$ incident at $\bar{v}$, but not in any other polyhedra. Since $T/\Gamma$ has no 1-cycles, each polyhedron $P[v]\times\{\bar{v}\}$ is embedded in $\hat{Y}$.

We define the map $\chi:X\to\hat{Y}$ by 
\begin{align*}
x\in P(v)&\mapsto (\alpha_v^{-1}(x),\bar{v})\\
x\in F(e)&\mapsto (\alpha_e^{-1}(x),\bar{e}),
\end{align*}
 for $v\in V(T)$ and $e\in E(T)$. Note that every point $x\in X$ lies in some polyhedron $P(v)$ because $f(x)\subset\rm{star}(v)$ for some $v\in V(T)$. If $e$ is incident at $v$ then the two maps above agree on $x\in F(e)\subset P(v)$ because $(\alpha_v^{-1}(x),\bar{v})=(\iota_{[e],[v]}\alpha_e^{-1}(x),\bar{v})\sim(\alpha_e^{-1}(x),\bar{e})$. Moreover, if $\chi(x)\in F[e]\times\{\bar{e}\}$, then $x$ is contained in a face in $X$, and the polyhedra on the left and right of this face will map down through $\chi$ to the two polyhedra $P[v]\times\{\bar{v}\}$ with $\bar{e}$ incident at $\bar{v}$. 
 
 We define the map $\psi:\hat{Y}\to Y$ by
 \begin{align*}
(a,\bar{v})\mapsto\phi_{\bar{v}}(a)\\
(a,\bar{e})\mapsto\varphi_{\bar{e}}(a),
 \end{align*}
 for $\bar{v}\in V(Z)$ and $\bar{e}\in E(Z)$. This is well defined, because if $\bar{e}$ is incident at $\bar{v}$ then $(a,\bar{e})\sim(\iota_{[e],[v]}(a),\bar{v})\mapsto \phi_{\bar{v}}\iota_{[e],[v]}(a)=\varphi_{\bar{e}}(a)$ since $\bfP_{\bar{v}}$ is on the left or right of $\bfF_{\bar{e}}$. It remains to show that these maps are coverings, and that $X\to\hat{Y}$ is a regular covering. Note it is not obvious from the above description that $\hat{Y}$ is even a graph, but this follows from the existence of the covering maps.\\
 
 \begin{claim}
 	$\chi:X\to\hat{Y}$ is a regular covering map.
 \end{claim}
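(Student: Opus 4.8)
The plan is to show that $\chi$ is a covering map by exhibiting the sets $P[v]\times\{\bar v\}$ as an evenly-covered open cover of $\hat{Y}$, and then to identify $\mathrm{Deck}(\chi)$ with the free group $H:=\pi_1(Z)$; since $T$ is a tree this $H$ is the Galois group of the (necessarily regular) covering $T\to Z$, and transitivity of the $H$-action on the fibres of $\chi$ will give regularity. Two local observations come first. Each polyhedron $P(v)=f^{-1}(\mathrm{star}(v))$ is open in $X$, because $f$ is continuous and $\mathrm{star}(v)$ is open in $T$; and each $P[v]\times\{\bar v\}$ is open in $\hat{Y}$, because its preimage under the quotient map $\bigsqcup_{\bar v}P[v]\times\{\bar v\}\sqcup\bigsqcup_{\bar e}F[e]\times\{\bar e\}\to\hat{Y}$ is the union of the single polyhedron component $P[v]\times\{\bar v\}$ with all of the face components $F[e]\times\{\bar e\}$ for $\bar e$ incident at $\bar v$, which is a union of whole components. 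Combining these with the fact — already recorded above, using that $T/\Gamma$ has no $1$-cycles — that every polyhedron embeds in $\hat{Y}$, we get that $\chi$ restricts to a homeomorphism (indeed a local isometry) $P(v)\to P[v]\times\{\bar v\}$ onto an open subset of $\hat{Y}$; since the $P(v)$ cover $X$, $\chi$ is a local isometry.

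Next I would prove that for each $\bar v\in V(Z)$
$$\chi^{-1}\bigl(P[v]\times\{\bar v\}\bigr)=\bigsqcup_{w\in V(T),\ \bar w=\bar v}P(w),$$
the point being that, from the description of the gluing, a point of $X$ lands in $P[v]\times\{\bar v\}$ exactly when it lies in $P(w)$ for some vertex $w$ of $T$ over $\bar v$. The union is disjoint: distinct vertices of $T$ lying over the same vertex of $Z$ are non-adjacent, since otherwise $Z$ would contain a $1$-cycle, contradicting that its base $T/\Gamma$ has none; and polyhedra at non-adjacent vertices of the tree $T$ are disjoint. By the previous paragraph each $P(w)$ occurring here maps homeomorphically onto $P[v]\times\{\bar v\}$. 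As $\{P[v]\times\{\bar v\}:\bar v\in V(Z)\}$ is an open cover of $\hat{Y}$ (every point of $\hat{Y}$ lies in a polyhedron), this realises every member as evenly covered, so $\chi$ is a covering map; in particular $\chi$ is surjective, $\hat{Y}$ is connected, and $\hat{Y}$ is a graph.

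For regularity, since $T$ is a tree the covering $T\to Z$ is regular with deck group $H:=\pi_1(Z)$, and from the factorisation $T\to Z\to T/\Gamma$ one gets $H\le\mathrm{Deck}(T\to T/\Gamma)=\Gamma$; freeness of the $\Gamma$-action on $T$ then gives both ``$\bar w=\bar v\iff w\in Hv$'' and that $H$ acts simply transitively on each fibre of $T\to Z$. On the one hand, for $h\in H$ and $x\in P(v)$ we have $h(x)\in P(h(v))$, $\overline{h(v)}=\bar v$, and $\alpha_{h(v)}=h\circ\alpha_v$, so
$$\chi(h(x))=\bigl(\alpha_{h(v)}^{-1}(h(x)),\,\overline{h(v)}\bigr)=\bigl(\alpha_v^{-1}(x),\,\bar v\bigr)=\chi(x),$$
whence $H\le\mathrm{Deck}(\chi)$. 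On the other hand, representing a point of $P[v]\times\{\bar v\}$ as $(a,\bar v)$ with $a\in P[v]$, the preimage computed above specialises to
$$\chi^{-1}\bigl((a,\bar v)\bigr)=\{\alpha_w(a):\bar w=\bar v\}=\{h\,\alpha_v(a):h\in H\}=H\cdot\alpha_v(a),$$
a single $H$-orbit. Thus $H\le\mathrm{Deck}(\chi)$ already acts transitively on every fibre of $\chi$, so $\chi$ is a regular covering (and indeed $\mathrm{Deck}(\chi)=H$).

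The main obstacle I anticipate is the point-set bookkeeping in the middle step: carefully justifying that polyhedra are open and embedded in the quotient $\hat{Y}$, and that $\chi^{-1}(P[v]\times\{\bar v\})$ is exactly the displayed disjoint union. Once that is in place, both the covering property and its regularity follow formally, the latter being a short consequence of the equivariance relation $\alpha_{g(v)}=g\circ\alpha_v$.
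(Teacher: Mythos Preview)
Your proposal is correct and follows essentially the same route as the paper: both identify the finite-index subgroup $\hat\Gamma=\pi_1(Z)\le\Gamma$ arising from the factorisation $T\to Z\to T/\Gamma$, use the equivariance relation $\alpha_{g(v)}=g\circ\alpha_v$ to show $\chi\circ g=\chi$ for $g\in\hat\Gamma$, and then verify that the fibres of $\chi$ are precisely the $\hat\Gamma$-orbits. The only cosmetic difference is that you spell out the evenly-covered open sets $P[v]\times\{\bar v\}$ explicitly, whereas the paper is terser and deduces the covering property from the fact that $\hat\Gamma$ acts properly on $X$ with $\chi$ as quotient map.
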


We have a factoring of covers $T\to Z\to T/\Gamma$, so $Z=T/\hat{\Gamma}$ for $\hat{\Gamma}<\Gamma$ of finite index. If $g\in\hat{\Gamma}$ and $v\in V(T)$, then $\bar{v}=\overline{g(v)}$ and we get the following commutative diagram.
\begin{equation}
\begin{tikzcd}[
ar symbol/.style = {draw=none,"#1" description,sloped},
isomorphic/.style = {ar symbol={\cong}},
equals/.style = {ar symbol={=}},
subset/.style = {ar symbol={\subset}}
]
P(v)\ar[bend right=40]{ddr}[swap]{\chi}\ar{rr}{g}&&P(g(v))\ar[bend left=40]{ddl}{\chi}\\
&P[v]\ar{ul}[swap]{\alpha_v}\ar{d}\ar{ur}{\alpha_{g(v)}}\\
&P[v]\times\{\bar{v}\}
\end{tikzcd}
\end{equation}
Hence $\chi\circ g=\chi$.

 On the other hand, if $x_1\in P(v_1)$ and $x_2\in X$ satisfy $\chi(x_1)=\chi(x_2)$, then $x_2\in P(v_2)$ for some $v_2$ with $\bar{v}_1=\bar{v}_2$, and so there exists $g\in\hat{\Gamma}$ with $g(v_1)=v_2$. This also tells us that $g(x_1),x_2\in P(v_2)$ have the same image under $\chi$, and since the restriction of $\chi$ to $P(v_2)$ is injective we deduce that $g(x_1)=x_2$. $\hat{\Gamma}$ acts properly on $X$, so we conclude that $\chi$ is a covering map with deck transformation group $\hat{\Gamma}$.\\
\begin{claim}
	$\psi:\hat{Y}\to Y$ is a covering map.
\end{claim}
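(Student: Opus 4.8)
\begin{proof}
The plan is to deduce that $\psi$ is a covering from two facts that are in hand by this point: that $\chi\colon X\to\hat Y$ is a covering map (just proved), hence surjective, and a local description of the composite $\psi\circ\chi\colon X\to Y$. The hard part will be the latter, namely verifying that the piecewise recipe defining $\psi\circ\chi$ reassembles, polyhedron by polyhedron, into the restriction of an honest covering $X\to Y$; once that is established the rest is a formal diagram chase on links.

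First I would pin down $\psi\circ\chi$ on each polyhedron $P(v)$. Write $\phi_{\bar v}=\eta_v\circ\alpha_v$ with $\eta_v\in\Map(v)$; by definition of $\Map(v)$ the map $\eta_v$ is the restriction to $P(v)$ of some covering $\bar\eta_v\colon X\to Y$. The claim is that $\psi\circ\chi$ agrees with $\eta_v$ on $P(v)$. For $x\in P(v)$ with $f(x)=v$ this is immediate from the definitions of $\chi$ and $\psi$: $\psi\chi(x)=\phi_{\bar v}(\alpha_v^{-1}(x))=\eta_v(x)$. For $x\in P(v)$ with $f(x)$ in the interior of an edge $e$ incident at $v$ we have $x\in F(e)$, and combining $\alpha_v^{-1}(x)=\iota_{[e],[v]}(\alpha_e^{-1}(x))$ with $\phi_{\bar v}\circ\iota_{[e],[v]}=\varphi_{\bar e}$ (which holds because $\bfP_{\bar v}$ is glued to $\bfF_{\bar e}$) we again get $\psi\chi(x)=\varphi_{\bar e}(\alpha_e^{-1}(x))=\eta_v(x)$. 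I would then observe that $\mathrm{star}(v)$ is open in $T$ and that $f$ is continuous and injective on $V(X)$ with image in $V(T)$, so for the unique vertex $x$ of $X$ with $f(x)=v$ the set $P(v)=f^{-1}(\mathrm{star}(v))$ is an open neighbourhood of $x$ in $X$. On that neighbourhood $\psi\circ\chi$ agrees with the covering $\bar\eta_v$, and a covering of graphs is a bijection on the link of each vertex; hence $\psi\circ\chi$ sends vertices to vertices, edges isometrically to edges, and induces a bijection $\mathrm{lk}_X(x)\to\mathrm{lk}_Y(\psi\chi(x))$ for every $x\in V(X)$.

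For the final step I would fix $w\in V(\hat Y)$, choose $x\in V(X)$ with $\chi(x)=w$ (possible since $\chi$ is surjective on vertices), and consider the factorisation on links
\[
\mathrm{lk}_X(x)\ \xrightarrow{\ \chi\ }\ \mathrm{lk}_{\hat Y}(w)\ \xrightarrow{\ \psi\ }\ \mathrm{lk}_Y(\psi(w)).
\]
The first arrow is a bijection because $\chi$ is a covering, and the composite is a bijection by the previous step, so $\psi$ restricts to a bijection $\mathrm{lk}_{\hat Y}(w)\to\mathrm{lk}_Y(\psi(w))$. As $w$ was arbitrary, $\psi$ is a local isometry that is bijective on every link. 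Finally, $\hat Y$ is connected, being the image of the connected graph $X$ under $\chi$, so $\psi(\hat Y)$ is a connected subgraph of $Y$ which, at each of its vertices, contains every incident edge of $Y$; hence $\psi(\hat Y)=Y$ and $\psi$ is a covering map. This completes the proof of the claim, and with it the proof of Theorem~\ref{thm:probmeasure2}.
\end{proof}
\qed
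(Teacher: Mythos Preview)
Your proof is correct and follows essentially the same approach as the paper: both lift a point of $\hat Y$ to $X$ via $\chi$, observe that on the relevant polyhedron $P(v)$ the composite $\psi\circ\chi$ agrees with some $\eta\in\Map(v)$ (which by definition extends to a covering $X\to Y$), and then use that $\chi$ is a covering to transfer the local covering property down to $\psi$. The only cosmetic difference is that the paper phrases the final step topologically (local homeomorphism plus compactness of $\hat Y$), whereas you phrase it combinatorially via bijections on links and handle surjectivity separately through connectedness; both are equally valid for graph covers.
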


	As $\hat{Y}$ is compact (by the previous claim), it is enough to show that each point in $\hat{Y}$ has an open neighbourhood that maps homeomorphically onto an open subspace of $Y$. Given $y\in \hat{Y}$, take a lift $x\in X$ and consider $f(x)\in T$. We know that $f(x)\in\rm{star}(v)$ for some vertex $v$, so $x\in P(v)$. We also know that $y=\chi(x)=(\alpha_v^{-1}(x),\bar{v})$, and that we have a commutative diagram
	
	\begin{equation}
	\begin{tikzcd}[
	ar symbol/.style = {draw=none,"#1" description,sloped},
	isomorphic/.style = {ar symbol={\cong}},
	equals/.style = {ar symbol={=}},
	subset/.style = {ar symbol={\subset}}
	]
	P(v)\ar[bend right=40]{ddr}[swap]{\eta}\ar{rr}{\chi}[swap]{\sim}&&P[v]\times\{\bar{v}\}\ar[bend left=40]{ddl}{\psi}\\
	&P[v]\ar{ul}{\sim}[swap]{\alpha_v}\ar{d}{\phi_{\bar{v}}}\ar{ur}[swap]{\sim}\\
	&Y,
	\end{tikzcd}
	\end{equation}
	where $\sim$ indicates homeomorphisms, and $\eta$ is some map in $\Map(v)$. The map $\eta:P(v)\to Y$ extends to a cover $X\to Y$, so there is a neighbourhood of $x$ in $P(v)$ that maps homeomorphically to an open subspace of $Y$. Since $\chi:X\to\hat{Y}$ is a cover, we know that this neighbourhood of $x$ maps homeomorphically to a neighbourhood of $y$, which in turn is mapped homeomorphically by $\psi$ to the same open subspace of $Y$.\\
 
This completes the proof of the theorem. \qed

\vspace{1cm}


\begin{thebibliography}{} 



\bibitem{agol} I. Agol, 
{\em The virtual Haken conjecture} (with appendix by I. Agol,
D. Groves and J. Manning),   Documenta Math. {\bf 18}, 1045--1087, 2013.


\bibitem{Bass}
H. Bass, \textit{Covering theory for graphs of groups}, J. Pure Appl. Algebra 89, 3--47, 1993.
 


\bibitem{BK}  
H.~Bass and R.~Kulkarni,
{\em Uniform tree lattices},
J. Amer. Math. Soc. 3, 843--902, 1990.

\bibitem{BL}  
H.~Bass and A.~Lubotzky, 
{\em Tree lattices.} 
%With appendices by Bass, L. Carbone, Lubotzky, G. Rosenberg and J. Tits. 
Progress in Mathematics, 176. Birkhäuser Boston, Inc., Boston, MA, 2001.

\bibitem{BehrstockNeumann}
Jason A. Behrstock and Walter D. Neumann, \emph{Quasi-isometric classification of non-geometric 3-manifold groups}, J. Reine Angew. Math., 669:101-120, 2012.

\bibitem{bestvina} M.~Bestvina,
{\em Geometric group theory and 3-manifolds hand in hand: the fulfillment of Thurston's vision},
Bull. Amer. Math. Soc. 51, 53--70, 2014.

\bibitem{BH}
M.R. Bridson and A. Haefliger, Metric Spaces of Non-Positive Curvature, Grundlehren der Mathematischen Wissenschaften,  Vol. 319, Springer-Verlag, Heidelberg-Berlin, 1999. 
 
 

\bibitem{BWilt} M.R.~Bridson and H.~Wilton,
{\em The triviality problem for profinite completions},
 Invent. Math. 202, 839--874, 2015. 

\bibitem{B4-annals} M.R.~Bridson, B. McReynolds, A.W.~Reid, R. Spitler,
{\em Absolute profinite rigidity and hyperbolic geometry},
Ann. of Math. (2) 192 (2020), no. 3, 679–719.
 
\bibitem{BM}
Marc Burger and Shahar Mozes, 
{\em Finitely presented simple groups and products of trees}, C. R. Acad. Sci. Paris 
S\'{e}r. I Math. 324, no. 7, 747--752, 1997

\bibitem{caprace} Pierre-Emmanuel Caprace,
{\em Finite and infinite quotients of discrete and indiscrete groups}, London Math. Soc. Lecture Note Ser., 455, Cambridge Univ. Press, Cambridge, 2019.
 
\bibitem{chatt-niblo}
I.~Chatterji and G.~Niblo,  
{\em From wall spaces to CAT$(0)$ cube complexes},
Internat. J. Algebra Comput. 15, 875--885, 2005.
 
\bibitem{gromov} M. Gromov, {\em Hyperbolic groups}, in ``Essays in Group Theory”, pp. 75–263, Math. Sci. Res. Inst. Publ. vol. 8, Springer, New York, 1987.

	
\bibitem{panel}
Mark F. Hagen and Nicholas W.M. Touikan, \textit{Panel collapse and its applications}, Groups Geom. Dyn. 13 (2019), no. 4, 1285–1334.

\bibitem{HP}
Fr\'ed\'eric Haglund and Fr\'ed\'eric Paulin,
{\em Simplicit\'{e} de groupes d'automorphismes d'espaces \`{a} courbure n{\'{e}}gative},
in The Epstein birthday schrift, 181--248, Geom. Topol. Monogr. 1, Coventry, 1998.


\bibitem{HW}
Fr\'ed\'eric Haglund and Daniel T. Wise, {\em Special cube complexes},
Geom. Funct. Anal. 17,  1551--1620, 2008.

\bibitem{Huang}
Jingyin Huang, \emph{Commensurability of groups quasi-isometric to RAAGs}, Invent. Math. 213, no. 3, 1179–1247, 2018.

\bibitem{leighton} 
Frank T. Leighton, {\em Finite common coverings of graphs}, J. Combin. Theory Ser. B 33, 231--238, 1982.
 
 \bibitem{MSW}
Lee Mosher, Michah Sageev, and Kevin Whyte, \textit{Quasi-actions on trees I.
	Bounded valence}, Annals of Mathematics, 158, 115--164, 2003.

\bibitem{neumann} Walter D. Neumann, 
{\em On Leighton's graph covering theorem}, 
Groups Geom. Dyn. 4, 863--872, 2010.


\bibitem{nica} 
Bogdan Nica,
{\em  Cubulating spaces with walls}, Algebr. Geom. Topol. 4, 297--309, 2004. 

\bibitem{rag}
M. S. Raghunathan, Discrete subgroups of Lie groups, Ergebnisse der Math., vol. 68,
Springer-Verlag, 1972. 

\bibitem{roller}
Martin Roller,
{\em Poc-sets, median algebras and group actions. An extended study of Dunwoody's construction and Sageev's theorem},
arXiv:1607.07747, 2016.

\bibitem{sageev} 
Michah Sageev, 
{\em Ends of group pairs and non-positively curved cube complexes},
 Proc. London Math. Soc. 71, 585--617, 1995.
 
        
\bibitem{sageev-pcmi} 
Michah Sageev, 
{\em CAT$(0)$ cube complexes and groups},
in ``Geometric group theory", 7--54, IAS/Park City Math. Ser., 21, Amer. Math. Soc., Providence, RI, 2014.


\bibitem{Agoltheorem}
Sam Shepherd, \textit{Agol's theorem on hyperbolic cubulations}, Rocky Mountain J. Math., to appear.

\bibitem{TwogenLeighton}
Sam Shepherd,
(with appendix by Giles Gardam and Daniel J. Woodhouse),
 \textit{Two generalisations of Leighton's Theorem}, arXiv:1908.008302019, 2019.
 
 
\bibitem{samThesis} Sam Shepherd, \emph{Finite covers of graphs and cube complexes}, PhD Thesis Oxford University, in preparation

\bibitem{sam-woodh} Sam Shepherd and Daniel J. Woodhouse, \emph{Quasi-isometric rigidity for graphs of virtually free groups with two-ended edge groups}, arXiv:2007.10034, 2020.

\bibitem{StarkWoodhouse} Emily Stark and Daniel J. Woodhouse, \emph{Action rigidity for free products of hyperbolic manifold groups}, arXiv:1910.09609, 2019.

\bibitem{wise} 
Daniel T. Wise, {\em Non-positively curved squared complexes: Aperiodic tilings and non- residually finite groups}, PhD Thesis Princeton University, ProQuest LLC, Ann Arbor, MI., 1996.


\bibitem{riches} Daniel T. Wise, {\em From Riches to RAAGs: 3-manifolds, Right-angled Artin Groups, and Cubical Geometry}, CBMS Regional Conference Series in Mathematics 117, Amer. Math. Soc., Providence, RI, 2012.


\bibitem{graphfins}
Daniel J. Woodhouse, \textit{Revisiting Leighton's Theorem with the Haar measure}, Math. Proc, Cambridge Philos. Soc., 1-9. doi:10.1017/S0305004119000550.

\end{thebibliography}
\end{document}